\newcommand{\nc}{\newcommand}
\newcommand{\delete}[1]{}
\nc{\mlabel}[1]{\label{#1}}  % Use this to suppress names
\nc{\mcite}[1]{\cite{#1}}  % Use this to suppress names
\nc{\mref}[1]{\ref{#1}}  % Use this to suppress names
\nc{\meqref}[1]{\eqref{#1}}
\nc{\mbibitem}[1]{\bibitem{#1}} % Use this to show number name
\nc{\mlabel}[1]{\label{#1}  % Use the next two lines to show names
{\hfill \hspace{1cm}{\small\tt{{\ }\hfill(#1)}}}}
\nc{\mcite}[1]{\cite{#1}{\small{\tt{{\ }(#1)}}}}  % Use this lines to show names
\nc{\mref}[1]{\ref{#1}{{\tt{{\ }(#1)}}}}  % Use this lines to show names
\nc{\meqref}[1]{\eqref{#1}{{\tt{{\ }(#1)}}}}
\nc{\mbibitem}[1]{\bibitem[\bf #1]{#1}} % Use this to show name
\newtheorem{theorem}{Theorem}[section]
\newtheorem{proposition}[theorem]{Proposition}
\newtheorem{lemma}[theorem]{Lemma}
\newtheorem{corollary}[theorem]{Corollary}
\theoremstyle{definition}
\newtheorem{definition}[theorem]{Definition}
\newtheorem{prop-def}{Proposition-Definition}[section]
\newtheorem{remark}[theorem]{Remark}
\newtheorem{tempex}[theorem]{Example}
\newtheorem{tempexs}[theorem]{Examples}
\newtheorem{temprmk}[theorem]{Remark}
\newtheorem{tempexer}{Exercise}[section]
\newenvironment{example}{\begin{tempex}\rm}{\end{tempex}}
\nc{\vsa}{\vspace{-.1cm}} \nc{\vsb}{\vspace{-.2cm}}
\nc{\vsc}{\vspace{-.3cm}} \nc{\vsd}{\vspace{-.4cm}}
\nc{\vse}{\vspace{-.5cm}}
\nc{\tred}[1]{\textcolor{red}{#1}} \nc{\tgreen}[1]{\textcolor{green}{#1}}
\nc{\tblue}[1]{\textcolor{blue}{#1}} \nc{\tpurple}[1]{\textcolor{purple}{#1}}
\nc{\li}[1]{\tred{#1}}
\nc{\lir}[1]{\tred{\underline{Li:} #1}}
\nc{\xing}[1]{\tblue{\underline{Xing:}#1 }}
\nc{\hu}[1]{\tpurple{\underline{Huhu:}#1 }}
\nc{\Hu}[1]{\tpurple{#1 }}
\nc{\mscr}[1]{\mathscr{#1}} \nc{\cal}[1]{\mathcal{#1}} \nc{\bb}[1]{\mathbb{#1}}
\nc{\id}{\rm{id}} \nc{\bfk}{{\bf k}}
\nc{\rba}{\mathscr{RBA}}\nc{\spp}{\mathscr{P}}\nc{\sqq}{\mathscr{Q}} \nc{\stt}{\mscr{T}} \nc{\spb}{\mscr{P}_\circ}  \nc{\sph}{\mscr{P}_\bullet}
\nc{\dera}{\mathscr{D}er\mathscr{A}}
\nc{\add}{\uplus}\nc{\badd}{\biguplus}
\nc{\lp}{\lambda_p}\nc{\la}{\lambda_a}%weight
\nc{\prl}[2]{(x#1y)#2z}\nc{\prr}[2]{x#1(y#2z)}%linear product
\nc{\ra}[3]{#1(x)#2#3(y)}\nc{\rb}[3]{#3(#1(x)#2y)+#1(x#2#3(y))}%linear Rota eq
\nc{\dereq}[6]
{\treey{\cdlr[0.8]{o} \cdl{ol}\cdr{or}#1{o/b}
\node at (0.2,-0.2) {$d$};\cdlr{o}\node  at (0,0) {$#2$};}
-\treey{\cdlr[0.8]{o} \cdl{ol}\cdr{or}
\node at (ol) {$#4$};\node at (-0.2,0.5) {$d$};\cdlr{o}\node  at (0,0) {$#3$};}
-\treey{\cdlr[0.8]{o} \cdl{ol}\cdr{or}
\node at (or) {$#6$};\node at (0.2,0.5) {$d$};\cdlr{o}\node  at (0,0) {$#5$};}}
\nc{\kc}[4]{\beta_{#1,#2,#3}^{b,#4}}\nc{\kb}[3]{\beta_{#1,#2}^{#3,n}}\nc{\ka}[3]{\gamma_{#1,#2}^{#3,n}}\nc{\kaa}[3]{\kappa_{#1,#2}^{#3,m}}%coeff.
\nc{\otimesh}{\mathop{\otimes}_{\text{H}}}%
\nc{\kong}{\noindent}
\nc{\rblineq}[5]{(
\treey{\cdlr[0.8]{o}\cdl{ol}\cdr{or}
\node at (ol) {$#1$};\node at (or) {$#3$};\node at (-0.2,0.5) {$#2$};\node at (0.2,0.5) {$#4$};\node at (0,0) {$#5$};}
-\treey{\cdlr[0.8]{o} \cdl{ol}\cdr{or}\node at (0,0) {$#5$};\node at (0,-0.2) {$#3$};
\node at (ol) {$#1$};\node at (-0.2,0.5) {${#2}$};\node at (0.2,-0.2) {$#4$};}
-\treey{\cdlr[0.8]{o} \cdl{ol}\cdr{or}\node at (0,0) {$#5$};\node at (0,-0.2) {$#1$};
\node at (or) {$#3$};\node at (0.2,-0.2) {$#2$};\node at (0.2,0.5) {$#4$};})}
\newlength\xch\newlength\dbj
\newif\ifqdd
\nc\cddf[3]{%
\coordinate (#2) at ($(#1)+(#3)$);
\draw (#1)--(#2);
\ifqdd\fill (#1) circle (\dbj);\fi}
\nc\cdx[4][1]{\cddf{#2}{#3}{#4:#1*\xch}}
\nc\cdu[2][1]{\cdx[#1]{#2}{#2u}{90}}%up
\nc\cdl[2][1]{\cdx[#1]{#2}{#2l}{135}}%left
\nc\cdr[2][1]{\cdx[#1]{#2}{#2r}{45}}%right
\nc\cdlr[2][1]{%
\foreach \i in {#2} {\cdl[#1]{\i}\cdr[#1]{\i}}}%left right
\nc\cduu[2][1]{%
\foreach \i in {#2} {\cdu[#1]{\i}\cdu[#1]{\i}}}%up up
\nc\cda[2][1]{\cdx[#1]{#2}{#2a}{90}}%above
\nc\cdb[2][1]{\cdx[#1]{#2}{#2b}{-90}}%below
\nc\cdbl[2][1]{\cdx[#1]{#2}{#2bl}{-135}}%below left
\nc\cdbr[2][1]{\cdx[#1]{#2}{#2br}{-45}}%below right
\nc\cdblr[2][1]{%
\foreach \i in {#2} {\cdbl[#1]{\i}\cdbr[#1]{\i}}}%below left right
\nc\treeo[2][]{\tikz[baseline=-0.58ex,line width=0.06ex,
every node/.style={font=\scriptsize,inner sep=1pt},#1]{%
\coordinate (o) at (0,0);#2}}%
\nc\treeyy[2][scale=0.8]{\treeo[#1]{\cdb o\cda o#2}}%
\nc\treey[2][scale=0.8]{\treeo[#1]{\cdb o\cdlr o#2}}%
\nc\treedy[2][scale=0.8]{\treeo[#1]{\cda o\cdblr o#2}}%
\nc\zhongdian[2]{ \node at($(#1)!0.5!(#2)$) {$\bullet$};}%
\nc\zhd[1]{\foreach \i/\j in {#1} {\zhongdian{\i}{\i\j}}}
\nc\zhongddian[2]{ \node at($(#1)!0.5!(#2)$) {$\circ$};}%
\nc\zhdd[1]{\foreach \i/\j in {#1} {\zhongddian{\i}{\i\j}}}
\nc\zhongsdian[2]{ \node at($(#1)!0.5!(#2)$) [scale=0.5]{$\bullet$};}%
\nc\zhds[1]{\foreach \i/\j in {#1} {\zhongsdian{\i}{\i\j}}}
\nc{\rc}{R_{A,\,}}
\nc{\cubicr}[4]{\kc{k}{l}{i}{1}\treey{\cdlr[0.8]{o}\cdl{ol}\cdr{or}
\node at (ol) {$#1$};\node at (or) {$#2$};\node at (-0.2,0.5) {$P_k$};\node at (0.2,0.5) {$P_l$};\node at (0,0.2) {$i$};\node at (0,0) {$#3$};}
+\kc{k}{l}{i}{2}\treey{\cdlr[0.8]{o} \cdl{ol}\cdr{or}#4{o/b}
\node at (ol) {$#2$};\node at (-0.2,0.5) {$P_l$};\node at (0.2,-0.2) {$P_k$};\node at (0,0.2) {$i$};\node at (0,0) {$#3$};}
+\kc{k}{l}{i}{3}\treey{\cdlr[0.8]{o} \cdl{ol}\cdr{or}#4{o/b}
\node at (or) {$#2$};\node at (0.2,-0.2) {$P_k$};\node at (0.2,0.5) {$P_l$};\node at (0,0.2) {$i$};\node at (0,0) {$#3$};}
+\kc{k}{l}{i}{4}\treey{\cdlr[0.8]{o} \cdl{ol}\cdr{or}\node at (0,-0.35) {$#2$};
\node at (0,-0.15) {$#1$};\node at (0.2,-0.1) {$P_k$};\node at (0.2,-0.4) {$P_l$};\node at (0,0.2) {$i$};\node at (0,0) {$#3$};}}
\nc{\cubicrr}[4]{\kc{k}{l}{i}{1}\treey{\cdlr[0.8]{o}\cdl{ol}\cdr{or}
\node at (ol) {$#1$};\node at (or) {$#2$};\node at (-0.2,0.5) {$P_k$};\node at (0.2,0.5) {$P_l$};\node at (0,0.2) {$i$};\node at (0,0) {$#3$};}
+\kc{k}{l}{i}{2}\treey{\cdlr[0.8]{o} \cdl{ol}\cdr{or}#4{o/b}
\node at (ol) {$#2$};\node at (-0.2,0.5) {$P_l$};\node at (0.2,-0.2) {$P_k$};\node at (0,0.2) {$i$};\node at (0,0) {$#3$};}\\
+&\kc{k}{l}{i}{3}\treey{\cdlr[0.8]{o} \cdl{ol}\cdr{or}#4{o/b}
\node at (or) {$#2$};\node at (0.2,-0.2) {$P_k$};\node at (0.2,0.5) {$P_l$};\node at (0,0.2) {$i$};\node at (0,0) {$#3$};}
+\kc{k}{l}{i}{4}\treey{\cdlr[0.8]{o} \cdl{ol}\cdr{or}\node at (0,-0.35) {$#2$};
\node at (0,-0.15) {$#1$};\node at (0.2,-0.1) {$P_k$};\node at (0.2,-0.4) {$P_l$};\node at (0,0.2) {$i$};\node at (0,0) {$#3$};}}%%%%%%%%%%%%%%%%%%%%%%%%
\nc{\qc}{quadratic/cubic }
\nc{\lin}[1]{{#1}^{\mathrm{LC}}} \nc{\Lin}[1]{\mathrm{Lin}_{#1}}
\nc{\mat}[1]{{#1}^{\mathrm{MT}^{\pmc}}} \nc{\Mat}[1]{\mathrm{Mat}_{#1}}
\nc{\tot}[1]{{#1}^{\mathrm{TC}}} \nc{\Tot}[1]{\mathrm{Tot}_{#1}}
\nc{\lrr}{{R_{{\mathrm{LC}}}}}\nc{\mrr}{R_{{\mathrm{ MT}^{\pmc}}}}
\nc{\lmr}{R_\mathrm{ LMT}}\nc{\trr}{R_{\mathrm{TC}}}\nc{\ttr}{R_{\mathrm{T}}}
\nc{\lrrs}[1]{(#1)_{\mathrm{LC}}}\nc{\mrrs}[1]{(#1)_{\mathrm{MT}}}\nc{\trrs}[1]{(#1)_{\mathrm{TC}}}
\nc{\lr}[1]{#1_{L}} \nc{\mr}[1]{#1_{M}} \nc{\tr}[1]{#1_{T}}
\nc{\tra}[1]{#1_{T,1}} \nc{\trb}[1]{#1_{T,2}} \nc{\trc}[1]{#1_{T,3}}
\nc{\tru}[1]{#1_{T'}}
\nc{\barr}[1]{\bar{#1}}
\nc\dual{\ast}\nc\dkc[4]{\beta'^{b,#4}_{#1,#2,#3}}
\nc{\linop}{\mscr{T}\big(\bigoplus\limits_{\omega\in\Omega} E_\omega\big)/\langle \bigcup\limits_{\omega\in\Omega}R_\omega\cup \lrr\rangle}
\nc\des{{\rm Des_{\leq_{L}}}}\nc\ver{{\rm Vert}}\nc\dess{{\rm Des_{\leq}}}\nc\sopr[1]{{#1'}}
\nc\dec{d}\nc\inp{{\rm in }}
\nc\tpol{treelike monomial\xspace} \nc\tpols{treelike monomials\xspace}
\nc\pordq{\leq_{\rm t}}\nc\pord{>_{\rm t}}\nc\prord{<_{\rm t}}
\nc\bfc{\mathbf{c}} \nc\as{\mscr{A}\hspace{-0.1cm}s} \nc\ass{\mscr{A}\hspace{-0.1cm}ss}
 \nc\dend{\mscr{D}end} \nc\matc{{\rm mt}} \nc\Matc{{\rm Mt}}\nc\sh{{\rm S}}\nc\lMatc{{\rm LMt}}\nc\lsh{{\rm LS}}
\nc\bigdot{\tikz \fill[black] (0.8ex, 0.8ex) circle (0.8ex);}%%big black dot
\nc\bigcir{\tikz \draw (0.8ex, 0.8ex) circle (0.8ex);}%%big white circ
\nc{\lie}{{\mathscr{L}ie}}\nc\lier{{R_{\rm Lie}}}\nc\lieg{{M_{\rm Lie}}}
\nc{\com}{{\mathscr{C}om}}\nc\hap{\underset{\rm H}{\otimes}}\nc\ckd[1]{#1^{\text{!`}}}
\nc\lev{leveled\xspace}\nc\lmat[1]{{#1}^{\mathrm{ LMT}}}\nc{\lmrr}{R_{\mathrm{LMT}}}
\nc\faml{\rm family }\nc\famls{\rm families } \nc\sfaml{\rm subfamily }
\nc\pamn{S(\Omega)_n}\nc\pamnc{C(\Omega)_n}
\nc\bfone{{\bf 1}}
\nc\qpl[2]{{(#1,\dec)}_{#2}}
\nc\fcw{(c_\omega)}  \nc\cw{c_\omega}\nc\fc{c}
\nc\fcwpp[1]{(n_{#1})} \nc\cwpp[1]{n_{#1}}
\nc\fcwp[2]{(#1_{#2})} \nc\cwp[2]{#1_{#2}}
\nc\rdenda[6]{(x\prec_{#1}y)\prec_{#2}z=x\prec_{#3}(y\prec_{#4} z)+x\prec_{#5}(y\succ_{#6} z)}
\nc\rdendb[4]{(x\succ_{#1}y)\prec_{#2}z=x\succ_{#3}(y\prec_{#4} z)}
\nc \rdendc[6]{(x\prec_{#1}y)\prec_{#2}z+(x\succ_{#3}y)\prec_{#4}z=x\succ_{#5}(y\succ_{#6} z)}
\nc\rdiaax[6]{(x\dashv_{#1}y)\dashv_{#2}z=x\dashv_{#3}(y\dashv_{#4} z)}
\nc\rdiaay[6]{(x\dashv_{#1}y)\dashv_{#2}z=x\dashv_{#5}(y\vdash_{#6} z)}
\nc\rdiab[4]{(x\vdash_{#1}y)\dashv_{#2}z=x\vdash_{#3}(y\dashv_{#4} z)}
\nc \rdiacx[6]{(x\dashv_{#1}y)\dashv_{#2}z=x\vdash_{#5}(y\vdash_{#6} z)}
\nc \rdiacy[6]{(x\vdash_{#3}y)\dashv_{#4}z=x\vdash_{#5}(y\vdash_{#6} z)}
\nc\rdendae[6]{(x\prec y)\prec z_{(#1,#2)}=x\prec (y\prec z)_{(#3,#4)}+x\prec(y\succ z)_{(#5,#6)}}
\nc\rdendbe[4]{(x\succ y)\prec z_{(#1,#2)}=x\succ (y\prec  z)_{(#3,#4)}}
\nc \rdendce[6]{(x\prec y)\prec z_{(#1,#2)}+(x\succ y)\prec z_{(#3,#4)}=x\succ(y\succ z)_{(#5,#6)}}
\nc\rdiaaxe[6]{(x\dashv y)\dashv z_{(#1,#2)}=x\dashv (y\dashv  z)_{(#3,#4)}}
\nc\rdiaaye[6]{(x\dashv y)\dashv z_{(#1,#2)}=x\dashv (y\vdash  z)_{(#5,#6)}}
\nc\rdiabe[4]{(x\vdash y)\dashv z_{(#1,#2)}=x\vdash (y\dashv  z)_{(#3,#4)}}
\nc \rdiacxe[6]{(x\dashv y)\dashv z_{(#1,#2)}=x\vdash (y\vdash   z)_{(#5,#6)}}
\nc \rdiacye[6]{(x\vdash y)\dashv z_{(#3,#4)}=x\vdash (y\vdash  z)_{(#5,#6)}}
\nc\polar{quasipolarization\xspace}\nc\polars{quasipolarizations\xspace}
\nc\ZZ{\mathbb{Z}}
\nc\pn{c}\nc\lif[1]{ {\rm Col}(#1)}
\nc\spli{foliation\xspace}\nc\splis{foliations\xspace}
\nc{\lift}{coloring\xspace}\nc{\lifts}{colorings\xspace}
\nc{\type}{type\xspace}\nc{\types}{types\xspace}
\nc\owc{$\Omega$-weak composition\xspace}\nc\owcs{$\Omega$-weak compositions\xspace}
\nc\mwc[1]{{\rm C}(\Omega,#1)}
\nc\tw{twisted by\xspace}
\nc\spl[1]{{
\prc{#1}{\pco{\fc}{#1}}}}
\nc\prc[2]{#1^{#2}}
\nc\sigmab{{\boldsymbol{\sigma}}}
\nc\taub{{\boldsymbol{\tau}}}
\nc\mpr{matching compatibility\xspace}\nc\mprs{matching compatibilities\xspace}
\nc\ufo{symmetrized foliation\xspace}\nc\ufos{symmetrized foliations\xspace}
\nc\deltal{D}
\nc\matpr[2]{#1^{#2}_t}
\nc\ltree[2]{{#1}_{#2}}
\nc\mbin{\tbinom{m}{\fc }}
\nc\cor{\phi}\nc\tcor[2]{#1#2}
\nc\pmc{{{\sigmab}}}
\nc\pmd{{\taub}}
\nc\pco[2]{\pmc^{#1,#2}}
\nc\ui[2]{e^{#1,#2}}
\nc\pcos[2]{\Lambda_{#1,#2}}
\nc\splm[1]{ {{#1}^{\pmc^{#1}}}}
\nc\pr[1]{{\Lambda_{#1}}}
\nc\pre[1]{{\pmc^{#1}}}
\nc\pc[1]{{\Lambda}_{#1}}
\nc\prs[2]{{#1}^{#2}}
\nc{\sbin}{\mathbf{b}}
\nc\ma[1]{_{{\rm MT}{}^{{#1}}}}
\nc{\QQ}{\mathbb{Q}}
\nc{\malpha}{\lambda}
\begin{document}

\title[Compatible structures of operads]{Compatible structures of operads by polarization, their Koszul duality and Manin products}

\author{Xing Gao}
\address{School of Mathematics and Statistics, Lanzhou University,
Lanzhou, 730000, China; Gansu Provincial Research Center for Basic Disciplines of Mathematics and Statistics, Lanzhou, 730000, China;
School of Mathematics and Statistics
Qinghai Nationalities University, Xining, 810007, China}
\email{gaoxing@lzu.edu.cn}

\author{Li Guo}
\address{
Department of Mathematics and Computer Science,
Rutgers University,
Newark, NJ 07102, United States}
\email{liguo@rutgers.edu}

\author{Huhu Zhang}
\address{School of Mathematics and Statistics,
	Lanzhou University, Lanzhou, Gansu 730000, China}
\email{zhanghh20@lzu.edu.cn}

\date{\today}
\begin{abstract}
Algebraic structures with replicate operations interrelated by various compatibility conditions have long been studied in mathematics and mathematical physics. They are broadly referred as linearly compatible, matching, and totally compatible structures. This paper gives a unified approach to these structures in the context of operads. Generalizing polarizations for polynomials in invariant theory to operads leads to linearly compatible operads. Partitioning polarizations into foliations gives matching operads which further yields total compatible operads under an invariance condition.
For unary/binary quadratic operads, linear compatibility and total compatibility are in Koszul dual, and the matching compatibilities are Koszul self-dual among themselves. For binary quadratic operads, these three compatible operads can be achieved by taking Manin products. For some finitely generated binary quadratic operad, Koszulity is preserved under taking the compatibilities.
\end{abstract}

\makeatletter
\@namedef{subjclassname@2020}{\textup{2020} Mathematics Subject Classification}
\makeatother
\subjclass[2020]{
18M70,  %Algebraic operads, cooperads, and Koszul duality
14L24, %(2000–now)Geometric invariant theory
05C05,   %Trees
37K10, %Completely integrable infinite-dimensional Hamiltonian and Lagrangian systems, integration methods, integrability tests, integrable hierarchies (KdV, KP, Toda, etc.)
%17B80,	%(2000–now)Applications of Lie algebras and superalgebras to integrable systems
%16W99, %Rings and algebras with additional structure/none of the above
%	12H05, %differential algebra
17B37, %Yang-Baxter equations and Rota-Baxter operators
35R60 % PDEs with randomness, stochastic partial differential equations
%	16T30,  %Connections with combinatorics
%	16S10, %Rings determined by universal properties (free algebras, coproducts, adjunction of inverses, etc.)
%	13P10, %Grobner bases; other bases for ideals and modules
%	16T10, %Bialgebras
}
\keywords{Operad, linearly compatible algebra; polarization; matching compatibility; total compatibility; Koszul duality; Manin product; Koszul operad}

\maketitle

\vspace{-1cm}

\tableofcontents

\setcounter{section}{0}

\allowdisplaybreaks

\section{Introduction}
\mlabel{sec:intro}
This paper extends the classical notion of polarization of polynomials in invariant theory to operads in order to give a uniform formulation of algebraic structures with replicated copies of operations satisfying various compatible conditions among the copies. The relations of compatible structures with Manin products and Koszul duality are established and, as an application, a large class of Koszul operads is obtained.

\vsb

\subsection{Compatibilities of algebraic structures with replicated operations}

Traditionally, a compatible structure is referring to a {\bf linearly compatible} structure, equipping a vector space with two identical copies of operations in a given algebraic structure, so that the sum of the two copies of operations still gives the same algebraic structure.
Together with several other algebraic compatible structures, they have been widely studied in mathematics and mathematical physics.

Linear compatibility is already behind the linear or infinitesimal deformations, when a linear
perturbation $\mu+a\nu$ of an operation $\mu$ is still the same kind of operation.
The deformation of algebraic structures began with the seminal work of Gerstenhaber~\cite{Ge} for associative algebras, and has since been developed for many algebraic structures and for algebraic operads~\cite{LV}.

As an independent structure, linear compatibility first appeared in the pioneering work~\cite{Mag} of Magri on bi-Hamiltonian systems, in which a Poisson algebra has two linearly compatible Poisson (Lie) brackets. Such a structure was abstracted to the notion of a bi-Hamiltonian algebra and was studied in the context of operads and Koszul duality~\cite{BDK,DK}.
Linearly compatible Lie algebras have been studied in connection with integrable systems, the classical Yang-Baxter equation, loop algebras and elliptic theta functions~\mcite{GS1,GS2,GS3,OS}.
The Koszul property of the linearly compatible Lie operad was verified in~\cite{DK2} applying posets of weighted partitions.

In the associative context, the linearly compatible structures were studied in~\cite{OS1} for matrix algebras and especially for linear deformations. Further a linearly compatible algebra gives rise to a hierarchy of integrable systems of ODEs via the Lenard-Magri scheme~\cite{Mag,Sok}.

In~\cite{CGM}, quantum bi-Hamiltonian systems were built on linearly compatible associative algebras.
In~\cite{Dot}, free associative algebras with two linearly compatible multiplications were studied as $S_n$-modules, were constructed in terms of rooted trees and grafting, and were further related to the Hopf algebras of Connes-Kreimer, Grossman-Larson and Loday-Ronco.
Homotopy linearly compatible algebras were introduced and the homotopy transfer theorem was proved in~\cite{Zh}.

Linear compatibility of algebraic structures was generalized to binary quadratic operads in~\mcite{St08} where it was also shown that the linear compatibility for binary quadratic operads is in Koszul duality to another naturally defined type of compatibility conditions called the {\bf total compatibility}.
Totally compatible associative algebras and Lie algebras with two multiplications were further studied in~\cite{ZBG2} in connection with tridendriform algebras and post-Lie algebras.

A third compatible condition, called the {\bf matching compatibility}, has shown its importance recently. It first appeared for the associative operad as $As^{(2)}$~\cite{Zi} and was further studied in~\cite{ZBG}. The significance of such structures was revealed by the appearances of matching pre-Lie algebra (first called multiple pre-Lie algebra) emerged in the remarkable work of Bruned, Hairer and Zambotti~\cite{BHZ,Foi} on algebraic renormalization of regularity structures, and matching Rota-Baxter algebras in the algebraic study of Volterra integral equations~\mcite{GGL}. Algebraic properties of matching Rota-Baxter algebras, dendriform algebras and pre-Lie algebras were studied in~\mcite{GGZy,ZGG20}.

Motivated by the path Hopf algebra described by A.B. Goncharov~\mcite{Go}, the notion of bi-matching dialgebras was introduced in~\mcite{Fl}.
Linear compatibilities of nontraditional algebraic structures have also be studied, such as for the $L_\infty$-algebras and the Nijenhuis structure introduced by Merkulov from differential geometry~\mcite{Da22,Me,St09}.
\vsb

\subsection{Operadic studies of compatible structures}
All these recent developments of algebraic structures with compatible operations call for a uniform approach to the various constructions. After the operadic approach in~\mcite{St08} on the linear compatibility and total compatibility for binary quadratic operads, a systematic study was presented in~\mcite{ZGG23} on all the three types of compatible structures, for nonsymmetric algebraic operads with both unary and binary operations that satisfy cubic as well as quadratic relations. Also the Koszul duality was established between the linear compatibility and total compatibility, and a Koszul self-duality was obtained for the matching compatibility.

The present study generalizes the previous treatments in two directions. First we remove the restrictions on algebraic operads, including the binary quadratic conditions in~\mcite{St08} and the nonsymmetric condition in~\mcite{ZGG23}. Furthermore, a general notion of matching compatibilities is introduced to unify the various known structures of this kind and to be used as the building blocks for other compatibility structures, such as\,\mcite{BruD}. We next elaborate on these two aspects.
\vsc

\subsubsection{Polarization and linear compatibility}

Considering the conceptual and notational complexities already presented in defining linear compatible structures for special operads in~\mcite{St08,ZGG23}, new notions and frameworks are needed to treat the general case.

For this purpose, we extend the notion of polarization for polynomials to operads. Polarization of polynomials is a basic notion in classical invariant theory~\mcite{LMP,Pro,We}. Given a homogeneous polynomial, polarization produces a unique symmetric multilinear form from which the original polynomial can be recovered by evaluating along a certain diagonal call the restitution. In addition to being essential in invariant theory, e.g., Weyl's polarization theorem\,\mcite{DM,We}, the notion has applications in other areas of mathematics, including algebraic geometry, Lie algebras and representation theory. See the Appendix for a brief summary.

As it turns out, linearly compatible structures for general operads are precisely encoded by extending polarization and the more general quasipolarizations for algebras to operads. In this sense, compatible structures could be part of an invariant theory for operads that is to be developed.
In meeting the present goal, quasipolarizations for operads allows a uniform definition of the linear compatibility of an arbitrary locally homogeneous operad. In particular, the approach applies to the compatible Nijenhuis structures~\mcite{St09} and the compatible $L_\infty$-algebras~\mcite{Da22}.

\vsc
\subsubsection{Foliations and matching compatibilities}
In the past, various matching compatibilities have been defined in an ad hoc way depending on the particular algebraic structure. A general approach was taken in~\mcite{ZGG23} for nonsymmetric operads. But it does not cover the matching dendriform algebras which naturally arose from Volterra integral operators~\mcite{ZGG20}. See Remark~\mref{rk:mdend}.

By introducing a partition technique of a quasipolarization, called the foliation which also has an interesting interpretation for polynomial polarization and quadratic algebras (see the Appendix), we give a more general notion of matching compatibility applicable to all locally homogeneous operads. The notion covers the existing instances including the ones in~\mcite{ZBG} and \mcite{ZGG20} from Volterra integral operators mentioned above, as well as the special case of nonsymmetric operads in~\mcite{ZGG23}.

It is worth noting that, by combining matching compatibilities from various foliations as building blocks, even more compatibility structures can be obtained. One of them is the multi-Novikov algebra newly emerged from singular stochastic PDEs\,\mcite{BruD}. See Example~\mref{ex:match}~\mref{it:matchnov}.
\vsc

\subsection{Outline of the paper}
The paper is organized as follows.

In Section~\mref{sec:comp}, we first recall the needed background and notions on linear operads in \S\,\mref{ssec:opd}.
We then introduce in the notion of quasipolarizations of an operad (Definition~\mref{def:lrr}) by extending the classical notion of quasipolarizations of polynomials.
This notion is applied to define the linear compatibility of an arbitrary operad (Definition~\mref{de:comp1}) and show that it satisfies the linear compatibility condition (Theorem~\mref{thm:comp}). We then introduce the notion of a \spli of a quasipolarization (Definition~\mref{defn:partm}) as a partition of the quasipolarization, and apply it to define a matching compatibility of the operad (Definition~\mref{de:mat}). We finally define the total compatibility (Definition~\mref{de:totcomp1}) by adding permutation invariant relations to a matching compatibility.

Focusing on quadratic operads with unary or binary operations in Section~\mref{sec:km}, the Koszul duality between the linear compatibility and the total compatibility is established (Theorem~\mref{thm:dul}). A Koszul self-duality is also obtained for matching compatibilities (Theorem~\mref{thm:mdul}). For a binary quadratic operad, each of the compatibilities can be obtained by taking one of the Manin products (Propositions~\mref{prop:maninbl}  and~\ref{prop:maninbll}).

In Section~\mref{sec:koszul}, we show that the Koszulity of some special binary quadratic operads is preserved by taking the linear, leveled matching and total compatibilities (Theorems~\ref{thm:wak}, \ref{thm:lmtk}). As an application, a class of Koszul operads is obtained (Corollary~\ref{coro:ko}).
%In Section~\mref{sec:koszul}, we show that the Koszulity of a binary quadratic operad is preserved by taking the linear, leveled matching and total compatibilities (Theorem~\mref{thm:lmtk}). As an application, a large class of Koszul operads is obtained (Corollary~\ref{coro:ko}.)

The Appendix recalls the notion quasipolarizations for polynomials and then introduces the refined notion of their foliations. It serves as a prototype to help understanding the corresponding notions for operads though it might be interesting on its own right. The reader might benefit from consulting it first though it is not required for the main body of the paper.

\smallskip

{\bf Notations.}
Throughout this paper, we fix a field $\bfk$ of characteristic zero
which will serve as the base field of all vector spaces, algebras,  tensor products, as well as linear maps.
The cardinality of a finite set $S$ is denoted by $|S|$. For a positive integer $n$, denote $[n]:=\{1,2,\ldots,n\}$.
For a function $f(X)$ with variables $x_i\in X$, let $f(X)|_{x_i \mapsto a}$ denote the evaluation of $f$  replacing each variable $x_i$ by $a$. Let $X\sqcup Y$ denote the disjoint union of sets $X$ and $Y$.
\vsc

\section{Compatibilities of locally homogeneous operads}
\mlabel{sec:comp}
\vsb
In this section, we apply the operadic polarization to give the general notions of  the linear compatibility, matching compatibility and total compatibility of a locally homogeneous operad.
\vsb

\subsection{Linear operads}
\mlabel{ssec:opd}
Following~\cite{BreD,LV}, we fix the notations on symmetric operads.

An {\bf $\mathbb{S}$-module} over $\bfk$ is a family
$$M :=\{M(n)\}_{n\geq0}=\{M(0),M(1),\ldots M(n),\ldots \}$$
of right modules $M{(n)}$ over the symmetric group $S_n, n\geq0$.
A {\bf (symmetric) operad } is an $\mathbb{S}$-module $\spp=\{\spp(n)\}_{n\geq0}$ equipped with an element $\id\in\mscr{P}(1)$ and {\bf composition maps}
\begin{align*}
\gamma:=\gamma_{n_1,\ldots,n_k}^k:\mscr{P}(k)\otimes\mscr{P}(n_1)\otimes\cdots\otimes\mscr{P}(n_k)\longrightarrow\mscr{P}({n_1+n_2+\cdots+n_k}),
\end{align*}
satisfying the equivariance with respect to the symmetric groups, associativity and unitarity.
A {\bf morphism of operads} from $\spp$ to $\sqq$ is a family of $S_n$-equivariant maps $f_n: \spp(n)\to \sqq(n)$ that is compatible with the compositions of operads.
An operad $\mathscr{P}$ is said to be {\bf reduced} if $\mathscr{P}(0) = 0$.

{\em In this paper, all operads is assumed to be reduced.}

For a vector space $V$, the endomorphism operad ${\rm End}_V$ is given by
${\rm End}_V(n):={\rm Hom}(V^{\otimes n},V),$
where the action of $S_n$ on ${\rm End}_V(n)$ is given by
$$(f\cdot\sigma)(v_1,\ldots,v_n)=f\big(\sigma\cdot(v_1,\ldots,v_n)\big):=f(v_{\sigma^{-1}(1)},\ldots,v_{\sigma^{-1}(n)}). $$
The composition map $\gamma$ is given by the usual composition of multivariate functions.

Let $\spp$ be an operad. A {\bf $\spp$-algebra} is a vector space $V$ with a morphism $\rho: \spp\to {\rm End}_V$ of operads. We say that the operad $\spp$ {\bf encodes} the $\spp$-algebra.

The free symmetric operad $\mscr{T}(M)$ on an $\mathbb{S}$-module $M$ can be described as follows. For a rooted tree $t$, denote by $V(t)$ the set of its internal vertices (not the leaves and root), by
$L(t)$ the set of its leaves, and by $\inp(v)$ the set of inputs of the vertex $v\in V(t)$. Let $T^m_n$ be the set of planar rooted trees with $m$ internal vertices and $n$ leaves labeled by $[n]$.
\begin{definition}
Let $M$ be an $\mathbb{S}$-module with a basis $X=\{X(1), \ldots, X(n),\ldots\}$.
An {\bf $X$-decorated tree} is a pair $(t,\dec)$ with $t$ in $T^m_n$ and $\dec:V(t)\to X$,
such that $\dec(v)$ is in $X(|\inp(v)|)$. By convention, the empty tree is an $X$-decorated rooted tree.
\end{definition}

By multi-linearity, the notion $(t,\dec)$ is also defined for a map $\dec:V(t)\to M$.

Let $M$ be an $\mathbb{S}$-module  with a basis $X$.
Denote
\begin{eqnarray*}
T^m_n(X)&=&\left\{(t, \dec)\,|\,t\in T^m_n, \dec:V(t)\to X\right\},\\
\stt(X){(1)}&=&\bigsqcup_{m=0}^{\infty} T_1^m(X),\,\stt(X){(n)}=\bigsqcup_{m=1}^{\infty} T_n^m(X)\, \text{ for }\, n\geq2,\\
\stt(M)(n)&=&\bfk \stt(X)(n)\, \text{ for }\, n\geq1.
\end{eqnarray*}

The symmetric group action on $\stt(M)(n)$ is given by
\begin{equation}
\treey{\cdlr[2]{o}\cdu[1.45]{o}\node[scale=0.8] at (-0.3,0.5) {$\cdots$};\node[scale=0.8] at (0.3,0.5) {$\cdots$};
 \node[scale=0.8][right=2] at (o) {$\mu\cdot\sigma$};\node[scale=0.8][above=2] at (ol) {$1$};\node[scale=0.8][above=2] at (ou) {$i$};\node[scale=0.8][above=2] at (or) {$n$};}
 =
\treey{\cdlr[2]{o}\cdu[1.45]{o}\node[scale=0.8] at (-0.3,0.5) {$\cdots$};\node[scale=0.8] at (0.3,0.5) {$\cdots$};
 \node[scale=0.8][right=2] at (o) {$\mu$};\node[scale=0.8][above=2] at (ol) {$\sigma^{-1}(1)\qquad$};\node[scale=0.8][above=2] at (ou) {$\sigma^{-1}(i)$};\node[scale=0.8][above=2] at (or) {$\qquad\sigma^{-1}(n)$};}, \quad  \sigma\in S_n, \mu\in\mscr{T}(M)(n).
 \mlabel{eq:acts}
\end{equation}
Then there is the following construction of free operads~\mcite{BreD}.
\begin{theorem}
The $\mathbb{S}$-module $\stt(M)$, together with the composition product $\gamma$ given by the grafting of rooted trees and the natural embedding $i:M\rightarrow \mscr{T}(M)$, is the free symmetric operad on an $\mathbb{S}$-module $M$.
\end{theorem}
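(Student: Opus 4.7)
My plan is to verify the two parts of the assertion in sequence: first that $\mathscr{T}(M)$ carries a well-defined symmetric operad structure via grafting, and second that it enjoys the universal property characterizing the free operad.

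For the first part, I would fix bases and describe the composition $\gamma$ on generators $(t,\mathrm{dec})$ by grafting: given $(t_0,\mathrm{dec}_0)\in\mathscr{T}(M)(k)$ and $(t_i,\mathrm{dec}_i)\in\mathscr{T}(M)(n_i)$ for $1\leq i\leq k$, one grafts each $t_i$ onto the $i$-th leaf of $t_0$, transports the decorations, and relabels leaves with $[n_1+\cdots+n_k]$ in order. I would then check: (i) unitarity, where $\id\in\mathscr{T}(M)(1)$ is realized as the empty tree (the one-leaf tree with no internal vertex); (ii) associativity of $\gamma$, which is an immediate consequence of the associativity of grafting of planar rooted trees; and (iii) $S_n$-equivariance, using that the action~\eqref{eq:acts} permutes leaf labels without affecting the underlying decorated tree, and that grafting commutes with this permutation once one accounts for the induced permutation on the joined leaf set. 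Each of these is a direct check that I would organize by induction on $|V(t_0)|$.

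For the universal property, I would construct, for any operad $\mathscr{P}$ and $\mathbb{S}$-module morphism $f:M\to\mathscr{P}$, the unique extension $\tilde f:\mathscr{T}(M)\to\mathscr{P}$ by induction on $|V(t)|$. The base case $|V(t)|=0$ sends the trivial one-leaf tree to $\id\in\mathscr{P}(1)$. For the inductive step, given a decorated tree $(t,\mathrm{dec})$ with root vertex $v$ decorated by $\mu\in X(k)$ and maximal subtrees $(t_1,\mathrm{dec}_1),\ldots,(t_k,\mathrm{dec}_k)$ rooted at the inputs of $v$, set
\[
\tilde f(t,\mathrm{dec}) := \gamma^{\mathscr{P}}\bigl(f(\mu);\tilde f(t_1,\mathrm{dec}_1),\ldots,\tilde f(t_k,\mathrm{dec}_k)\bigr),
\]
extended $\bfk$-linearly. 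One then checks that $\tilde f$ is $S_n$-equivariant and compatible with $\gamma$; both reduce, via induction on the number of internal vertices, to the very recipe by which $\gamma$ on $\mathscr{T}(M)$ is defined as grafting. Uniqueness is forced since any operad morphism extending $f$ must satisfy the same recursive identity.

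The main conceptual obstacle, and the step I would dwell on, is the bookkeeping for the symmetric group actions under grafting: one must make precise that the global $S_{n_1+\cdots+n_k}$-action on $\gamma(\mu_0;\mu_1,\ldots,\mu_k)$ in $\mathscr{T}(M)$ matches the block-permutation-plus-internal-permutation rule demanded by the operad axiom, and that the inductively defined $\tilde f$ respects this. All remaining calculations are routine, so once the equivariance of grafting is settled cleanly, associativity, unitarity, and the universal property fall out by induction on $|V(t)|$.
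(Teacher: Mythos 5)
The paper does not prove this theorem itself; it is quoted from the literature (Bremner--Dotsenko), and your outline is precisely the standard argument given there: grafting defines $\gamma$, the one-leaf tree is the unit, associativity and equivariance are direct checks, and the universal property is established by the recursive extension $\tilde f$ over the root decomposition, with uniqueness forced by the recursion. The one point you flag but do not fully resolve --- the leaf-label bookkeeping --- is indeed the only place needing real care: in your formula for $\tilde f(t,\mathrm{dec})$ the subtrees $t_1,\ldots,t_k$ carry leaf labels forming a partition of $[n]$ rather than consecutive blocks, so the right-hand side must be followed by the action of the corresponding unshuffle permutation before the inductive verification of equivariance and compatibility with $\gamma$ goes through; with that supplied, your proof is complete and coincides with the cited one.
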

\vsc
Any operad $\spp$ can be presented as the quotient of a free operad modulo an operadic
ideal:
$$\mscr{P} =\mscr{P}(M, R) := \mscr{T}(M)/\langle R\rangle,\vsc$$
where the $\mathbb{S}$-modules $M=\{0,M(1), \ldots, M(n), \ldots\}$ and $R=\{0,R(1), \ldots, R(n), \ldots\}$ are called the {\bf generators} and {\bf relations} of $\mscr{P}$, respectively.
The operad $\spp$ is called {\bf finitely generated} if $M(n)$ is finite dimensional for each $n \geq 1$.

For $(t,\dec)\in\mscr{T}(X)$, its {\bf weight} $|(t,\dec)|$ is defined to be the number of internal vertices of $t$.
Denote by $\mscr{T}(X)^{(m)}$ (resp. $\mscr{T}(X)^{(m)}_n$) the subset of $\mscr{T}(X)$ (resp. $\mscr{T}(X)_n$) of weight $m$. Then
$$\mscr{T}(M)^{(m)}:=\bfk \mscr{T}(X)^{(m)}=\Big\{0,\mscr{T}(M)^{(m)}(1),\ldots,\mscr{T}(M)^{(m)}(n),\ldots \Big\}\,\text{ where }\, \mscr{T}(M)^{(m)}(n):=\bfk T_n^m(X).\vsc$$
In particular, $\mscr{T}(M)^{(0)}=\bfk  T^0_1(X)$ and $\mscr{T}(M)^{(1)}=\Big\{0, \bfk T_1^1(X),\ldots,\bfk T_n^1(X),\ldots\Big\}=M.$

In this paper, we focus on locally homogeneous operads described as follows.
\begin{definition}\mlabel{de:ubo}
Let $\mscr{P} = \mscr{T}(M)/\langle R\rangle $ be an operad.
A relation in $R$ is called {\bf homogeneous} (of weight $m$) if it is in
$\mscr{T}(M)^{(m)}$ for some $m\geq 0$. If $R$ is in $\mscr{T}(M)^{(m)}$, then the operad $\spp$ is called {\bf homogeneous}. In particular, if $R$ is in $\mscr{T}(M)^{(2)}$, then $\spp$ is called {\bf quadratic}. If the operadic ideal $\langle R \rangle$ is generated by homogeneous relations, then the operad $\spp$ is called {\bf locally homogeneous}.
\end{definition}
A homogeneous  relation $r\in \mscr{T}(M)^{(m)}(n)=\bfk T_n^m(X)$ can be uniquely expressed as
$$r=\sum_{  (t,\dec)\in T_n^m(X)}\alpha_{t,\dec}\,(t,d).$$
In this case, denote $|r|:=m$, called the {\bf weight } of $r$.
\subsection{Polarizations for operads and linear compatibility}
\mlabel{ssec:polar}
\subsubsection{Polarizations for operads}
We next introduce the notion of quasipolarizations for operads.
In analog to the polarizations for a polynomial (see \S\,\mref{ss:polar}), the polarizations for a tree polynomial can be defined as the formal coefficients of a formal expansion.

Let $\Omega$ be a nonempty set and $X=\{X(1), \ldots, X(n),\ldots\}$ a family of sets.
For $\omega\in \Omega$, let $$X_\omega(n):=\{x_\omega\,|\,x\in X(n), \omega\in\Omega\}$$
be a replica of $X(n)$, labeled by $\omega$ for distinction.
Define
$$X_\omega:=\{X_\omega(1), \ldots, X_\omega(n),\ldots\}  \text{ and } X_\Omega:= \bigsqcup_{\omega\in\Omega} X_\omega=X\times \Omega.
\vsc$$
\begin{definition}
Let $M$ be an $\mathbb{S}$-module with a basis $X$.
\begin{enumerate}
\item An {\bf \owc} of $m\geq 1$ is a tuple $\fcw=(c_\omega)_{\omega\in \Omega}$ representing a map $\fc:\Omega\to \mathbb{Z}_{\ge 0}$ with $\sum_{\omega\in\Omega} \cw=m$.
Let $\mwc{m}$ be the set of \owcs of $m$.
\item Consider a decorated tree polynomial of homogeneous weight $m$
\begin{equation}
 f(X):=\sum_{  (t,\dec)\in T_n^m(X)}\alpha_{t,\dec} \,(t,\dec)\in \mscr{T}(M)^{(m)}(n).
 \mlabel{eq:opfun}
\end{equation}
Regarding $(\malpha_\omega)_{\omega\in\Omega}$ as formal variables, we have the expansion
\begin{equation}
f(X)\big|_{x\mapsto\sum_{\omega\in\Omega} \malpha_\omega x_\omega}=\sum_{\fc=(c_\omega)\in\mwc{m} } f_{\fc }\big(X_\Omega\big)\prod_{\omega\in \Omega} \malpha_\omega^{c_\omega}.
\mlabel{eq:polarhom}
\end{equation}
The tree polynomial $f_\fc(X_\Omega)$ is called the  {\bf \polar of $f(X)$ of type $\fc\in\mwc{m}$}.
A \polar is called a {\bf full polarization} if $\cw=0$ or $1$, for each $\omega\in\Omega$.
 \end{enumerate}
\mlabel{def:lrr}
\end{definition}
\vsb
\begin{remark}
If an operad is concentrated in arity 1 with commuting operators, then it is a commutative algebra. In this case, we recover the concept of a polynomial (full or quasi) polarization for $|\Omega|=m$ recalled in \S\,\mref{ss:polar}.
\mlabel{rem:nm}
\end{remark}
\vsb
To facilitate later computations, we next give an explicit form of quasipolarizations which requires some preliminary notations. We denote the projections
\begin{equation}
	\begin{split}
\pi:=\pi_X:&X_\Omega\to X, \quad x_\omega\mapsto x, \\
\pi_\Omega: & X_\Omega\to \Omega,\quad x_\omega \mapsto \omega, \quad x_\omega \in X_\Omega.
\end{split}
\mlabel{eq:proj}
\end{equation}

\begin{definition}
Let $\Omega$ be a nonempty set. Let $M$ be an $\mathbb{S}$-module with a basis $X$.
\begin{enumerate}
\item For a decorated tree $(t,\dec)\in T^{m}_n(X)$, an ($\Omega$-){\bf \lift} of $\dec$ is a coloring of $\dec$ on its vertices by elements of $\Omega$. More precisely, the \lift is a map $\deltal: V(t)\to X_\Omega$ such that $\deltal\circ \pi=\dec$:
\vsc
$$\xymatrix{
                &         X_\Omega \ar[d]^{\pi}     \\
  V(t) \ar@{-->}[ur]^{\deltal} \ar[r]_{\dec} & X ~
}
\vsb
$$
Denote by $\lif{\dec}$ the set of \lifts of $\dec$.

\item Let $(t,\dec)\in T^{m}_n(X)$ be a decorated tree and $\deltal$ a \lift of $\dec$. Define a map
\begin{equation} \mlabel{eq:type}
\fc: \Omega\to \mathbb{Z}_{\ge 0}, \quad \omega\mapsto c_\omega:=|(\pi_\Omega\circ \deltal)^{-1}(\omega)|,
\end{equation}
which will be denoted as a tuple $\fc=(\fc_\omega)$. With this notation, we say that the \lift $\deltal$ is of {\bf \type} $\fc=(\fc_\omega)$. Note that $\fc$ is in $\mwc{m}$.

\item For the map defined by Eq.~\eqref{eq:type}:
\begin{equation*} \mlabel{eq:lift}
T: \lif{\dec} \to \mwc{m}, \quad \deltal \mapsto \fc=(\fc_\omega),
\end{equation*}
let $\lif{\dec,\fc}$ be the inverse image of $\fc\in \mwc{m}$, i.e., the set of \lifts of $\dec$ of type $\fc$.
\end{enumerate}
\mlabel{defn:lift}
\end{definition}

\begin{remark}
Note that
	$|\lif{\dec, c}|=\tbinom{m}{\fc }:=\frac{m!}{\prod_{\omega\in\Omega} \cw!}$ for an \owc $c$.
\mlabel{remark:lif}
\end{remark}

We have the following constructions for operadic quasipolarizations.
\begin{proposition}\mlabel{prop:polar}
Let $M$ be an $\mathbb{S}$-module with a basis $X=\big\{X(1), \ldots, X(n),\ldots\big\}$ and $\Omega$ be a finite nonempty set.
Let $f(X)\in \mscr{T}(M)^{(m)}(n)$ be given in Eq.~\meqref{eq:opfun}.
\begin{enumerate}
\item  For an \owc $\fc\in\mwc{m}$, applying the formal partial derivatives, we have
$$f_{\fc }\big(X_\Omega \big)=\bigg(\frac{\partial^m}{\prod\limits_{\omega\in\Omega}\partial^{\cw}\malpha_\omega}\left(f(X)\big|_{x\mapsto\sum_{\omega\in\Omega}\malpha_\omega x_\omega}\right)\bigg)\bigg|_{\malpha_\omega=0},\,\text{ where }\, \malpha_\omega\in\bfk.$$
\mlabel{item:constp}
\vsc
\item For an \owc $\fc\in\mwc{m}$, we have
\begin{equation}
	f_{\fc}\big(X_\Omega \big)%=\sum_{  (t,\dec)\in T_n^m(X)}\alpha_{t,\dec} \,\qpl{t}{\fc}
	=\sum_{(t,\dec)\in T_n^m(X)} \alpha_{t,\dec} \sum_{\deltal\in \lif{\dec,c}} (t,\deltal).
\mlabel{eq:qpc}
\end{equation}
In particular, the quasipolarization of a tree monomial $(t,d)$ is given by
$$\qpl{t}{\fc}=\sum_{\deltal\in \lif{\dec,c}} (t,\deltal)=\sum_{|(\pi_\Omega\circ \deltal)^{-1}(\omega)|=c_\omega} (t,\deltal).$$
\mlabel{item:constpa}
\vsc
\item For an \owc $\fc\in\mwc{m}$, we have the {\bf restitution} of $f:$
\vsb
\begin{equation}
f_{\fc }\big(X_\Omega\big)\big|_{x_\omega\mapsto x}=\tbinom{m}{\fc} f(X).
\mlabel{eq:ores}
\vsb
\end{equation}
In particular, for a full polarization $f_{\fc }\big(X_\Omega \big)$, we have
$f_{\fc }\big(X_\Omega\big)\big|_{x_\omega\mapsto x}=m! f(X).$
\mlabel{item:replace}
\item A full polarization $f_{\fc }\big(X_\Omega\big)$ is multilinear in each variable,
and is symmetric in a variable set $X_\omega$ for which $c_\omega=1$.
\mlabel{item:fullp}
\end{enumerate}
\end{proposition}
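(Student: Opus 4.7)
The plan is to prove the explicit coloring formula in (b) first by direct expansion of the substitution $x \mapsto \sum_{\omega\in\Omega}\malpha_\omega x_\omega$ on each tree monomial, and then deduce (a), (c), and (d) by short arguments from (b) and its proof.

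For (b), I would fix a single tree monomial $(t,\dec)\in T_n^m(X)$ and compute $(t,\dec)\big|_{x\mapsto \sum_\omega \malpha_\omega x_\omega}$ vertex by vertex. The substitution replaces each decoration $\dec(v)$ by $\sum_\omega \malpha_\omega \dec(v)_\omega$, and multilinearity of the tree composition distributes the $m$ resulting sums to yield
$$(t,\dec)\Big|_{x\mapsto \sum_\omega \malpha_\omega x_\omega} \;=\; \sum_{\deltal\in \lif{\dec}} \Big(\prod_{v\in V(t)} \malpha_{\pi_\Omega(\deltal(v))}\Big)\,(t,\deltal),$$
where the sum is indexed by the \lifts $\deltal:V(t)\to X_\Omega$ lifting $\dec$. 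Grouping these \lifts by their \type $\fc\in\mwc{m}$ (Definition~\ref{defn:lift}) collects summands sharing the common monomial weight $\prod_\omega \malpha_\omega^{c_\omega}$, giving
$$(t,\dec)\Big|_{x\mapsto \sum_\omega \malpha_\omega x_\omega} \;=\; \sum_{\fc\in\mwc{m}} \Big(\prod_\omega \malpha_\omega^{c_\omega}\Big) \sum_{\deltal\in\lif{\dec,\fc}} (t,\deltal).$$
Summing over all $(t,\dec)\in T_n^m(X)$ with coefficients $\alpha_{t,\dec}$ and matching the result with the defining expansion~\eqref{eq:polarhom} isolates $f_\fc(X_\Omega)$ as in~\eqref{eq:qpc}.

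Part (a) then follows by formally differentiating~\eqref{eq:polarhom}: the operator $\partial^m/\prod_\omega \partial^{c_\omega}\malpha_\omega$ followed by evaluation at $\malpha_\omega = 0$ annihilates every monomial $\prod_\omega \malpha_\omega^{c'_\omega}$ with $\fc'\neq\fc$ and picks out the unique term indexed by $\fc$, under the standard Taylor-coefficient reading of the multi-index derivative. Part (c) is the restitution: substituting $x_\omega\mapsto x$ in~\eqref{eq:qpc} collapses each $(t,\deltal)$ back to $(t,\dec)$, so each inner sum contributes $|\lif{\dec,\fc}|$ copies of $(t,\dec)$, which by Remark~\ref{remark:lif} equals $\binom{m}{\fc}$; for a full polarization all nonzero $c_\omega$ equal $1$ and $\binom{m}{\fc}=m!$. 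Finally for (d), in a full polarization each $\omega$ with $c_\omega=1$ decorates exactly one vertex of every $(t,\deltal)$, so every $x_\omega$ occurs at most linearly, while any permutation of the indices $\omega$ with $c_\omega=1$ induces a bijection on $\lif{\dec,\fc}$ that leaves the inner sum in~\eqref{eq:qpc} invariant, producing the asserted symmetry. The only mildly delicate step is the bookkeeping in (b)---checking that distributing the substitution over the $m$ internal vertices is canonically parametrized by $\lif{\dec}$ without double counting---after which the remaining parts are immediate consequences of~\eqref{eq:qpc}.
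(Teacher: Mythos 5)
Your proposal is correct and follows essentially the same route as the paper: part (b) is established by the same vertex-by-vertex expansion of the substitution, distributing by multilinearity and grouping the resulting colorings by type to match Eq.~\eqref{eq:polarhom}, while (a) is read off as a formal Taylor coefficient of that same expansion, and (c), (d) follow from \eqref{eq:qpc} together with $|\lif{\dec,\fc}|=\tbinom{m}{\fc}$ exactly as in the paper. Reordering (b) before (a) is immaterial, since both arguments for (a) rest directly on Eq.~\eqref{eq:polarhom} rather than on \eqref{eq:qpc}.
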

\vsc
\begin{proof}
To represent a decorated tree with a specified decoration, we fix an ordering $(v_1,\ldots,v_m)$ of the vertices of the tree.
Then the decorated tree $(t,\dec)$ can be uniquely written as
\begin{equation*}
(t,\dec)=t(\dec(v_1),\ldots,\dec(v_m)),\text{ for }\, v_1,\ldots,v_m\in V(t).
\end{equation*}
\mref{item:constp} By Eq.\meqref{eq:polarhom}, we obtain
\vsa
{\small
$$
\frac{\partial^m}{\prod\limits_{\omega\in\Omega}\partial^{\cw}\malpha_\omega}\left(f(X)\big|_{x\mapsto\sum_{\omega\in\Omega}\malpha_\omega x_\omega}\right)\bigg|_{\malpha_\omega=0}
=\frac{\partial^m}{\prod\limits_{\omega\in\Omega}\partial^{\cw}\malpha_\omega}
\bigg( \sum_{\fc=(c_\omega)\in\mwc{m} } f_{\fc }\big(X_\Omega\big)\prod_{\omega\in \Omega} \malpha_\omega^{c_\omega}\bigg)\bigg|_{\malpha_\omega=0}
=
f_{\fc }\big(X_\Omega \big).
$$}
\vsc
\smallskip
\noindent
\mref{item:constpa}
By Eq.~\meqref{eq:opfun} and multi-linearity of decorated tree polynomials, we have
{\small
\begin{eqnarray*}
f(X)\big|_{x\mapsto\sum_{\omega\in\Omega}\malpha_\omega x_\omega}
&=&\sum_{  (t,\dec)\in T_n^m(X)}\alpha_{t,\dec} \,(t,\dec)\big|_{x\mapsto\sum_{\omega\in\Omega}\malpha_\omega x_\omega}\\
&=&\sum_{  (t,\dec)\in T_n^m(X)}\alpha_{t,\dec} \,t\left(\dec(v^t_1),\ldots, \dec(v^t_m)\right)\big|_{x\mapsto\sum_{\omega\in\Omega}\malpha_\omega x_\omega}\\
&=&\sum_{  (t,\dec)\in T_n^m(X)}\alpha_{t,\dec} \,t\left(\sum_{\omega\in\Omega}\malpha_\omega\dec(v^t_1)_\omega,\ldots, \sum_{\omega\in\Omega}\malpha_\omega\dec(v^t_m)_\omega\right)\\
&=&\sum_{  (t,\dec)\in T_n^m(X)}\alpha_{t,\dec} \,\sum_{\omega_1,\ldots,\omega_m\in\Omega}
\,t\left(\dec(v^t_1)_{\omega_1},\ldots, \dec(v^t_m)_{\omega_m}\right)\lambda_{\omega_1}\cdots\lambda_{\omega_m}\\
&=& \sum_{  (t,\dec)\in T_n^m(X)} \sum_{\deltal\in\lif{\dec,\fc}} \alpha_{t,\dec} \,(t,\deltal)\prod_{\omega\in \Omega} \malpha_\omega^{c_\omega}.
\end{eqnarray*}
}
Further by Eq.~\meqref{eq:polarhom}, the equality holds.

\noindent
\mref{item:replace}
By Eq.~\meqref{eq:qpc} and $|\lif{\dec,c}|=\mbin$, we have
\vsa
$$f_{\fc}\big(X_\Omega \big)\big|_{x_\omega\mapsto x}
=\sum_{(t,\dec)\in T_n^m(X)} \alpha_{t,\dec} \sum_{\deltal\in \lif{\dec,c}} (t,\deltal)\big|_{x_\omega\mapsto x}
=\sum_{(t,\dec)\in T_n^m(X)} \alpha_{t,\dec} \,\mbin\, (t,\dec)
=\tbinom{m}{\fc} f(X).
$$

\noindent
\mref{item:fullp}
By the definition of full polarizations, each decorative element in $X_\omega$ appears exactly once in $f_{\fc }\big(X_\Omega \big)$. Thus a full polarization is multilinear and symmetric in $X_\omega$ with $c_\omega=1$.
\end{proof}
\vsc

\subsubsection{Linear compatibility}
\mlabel{sec:linc}
We first recall the concept of linear compatibility of algebras. See~\mcite{ZGG23} and the references therein for further details.

\begin{definition}
Let $\Omega$ be a nonempty set. Let $\spp$ be an operad with generators $\mu_1, \ldots, \mu_k$. For each $\omega\in\Omega$, let $\mu_{\omega,1},\ldots,\mu_{\omega,k}$ be a replica of $\mu_1, \ldots, \mu_k$. Let $V$ be a vector space such that, for each $\omega\in \Omega$, the tuple
\vsb
$$A_\omega:=(V, \mu_{\omega,1},\ldots,\mu_{\omega,k})$$
is a $\spp$-algebra. Then $V$ is called an {\bf ($\Omega$-)linearly compatible $\spp$-algebra} if, for every linear combination $\nu_i:=\sum_{\omega\in\Omega}\malpha_\omega \mu_{\omega,i}\in \oplus_{\omega\in \Omega} \bfk u_{\omega,i}, i=1,\ldots,k,$ with finitely many coefficients nonzero, the tuple
\vsb
$$A:=A_{(\malpha_\omega)}: = (V, \nu_1, \ldots, \nu_k)$$
is still a $\spp$-algebra.
\mlabel{de:lina}
\end{definition}
\vsb

We next determine the operad governing linearly compatible $\mscr{P}$-algebras.

\begin{definition}
Let $\Omega$ be a nonempty set.
Let $M$ be an $\mathbb{S}$-module with a basis $X$ and let $\mscr{P}=\mscr{T}(M)/\langle R\rangle$ be a locally homogeneous operad.
We denote by $\lrr$ the set of \polars of all homogeneous polynomials $r(X)\in R$.
Explicitly,
\vsa
$$\lrr:=\lrr_{,\Omega}:=\bigg\{r_{\fc}\big(X_\Omega \big)\in \mscr{T}\Big(\bigoplus\limits_{\omega\in\Omega} M_\omega\Big)\, \bigg| \, r(X)\in  R, \fc\in\mwc{|r(X)|}\bigg\}.
\vsa
$$
We call the operad
\vsb
$$\lin{\mscr{P}}_\Omega:=\mscr{T}\Big(\bigoplus\limits_{\omega\in\Omega} M_\omega\Big)\Big/\Big\langle \lrr\Big\rangle \vsa
$$
the \textbf{linearly compatible operad} of $\spp$ with the parameter set $\Omega$.
\mlabel{de:comp1}
\end{definition}

We give an example from the commutative operad $\com$ of commutative algebras.
\begin{example}
\mlabel{ex:linas}
The commutative operad $\com=\mscr{T}(M)/\langle R_{\rm Com}\rangle$ is given by
\vsb
$$M=M_2=\bfk \bigg\{\treey{\cdlr{o}\node  at (ol) [above] {1}; \node  at (or) [above] {2};}=\treey{\cdlr{o}\node  at (ol) [above] {2};\node  at (or) [above] {1};}\bigg\}$$
and
\vsb
$$R_{\rm Com}=\bfk\Bigg\{r:=r(\cdot):=\treey{\cdlr{ol}\foreach \i/\j in { oll/1,olr/2,or/3} {\node at  (\i) [above] {\j};}}-\treey{\cdlr{ol}\foreach \i/\j in { oll/2,olr/3,or/1} {\node at  (\i) [above] {\j};}}
,s:=s(\cdot):=\treey{\cdlr{ol}\foreach \i/\j in { oll/1,olr/2,or/3} {\node at  (\i) [above] {\j};}}-\treey{\cdlr{ol}\foreach \i/\j in { oll/3,olr/1,or/2} {\node at  (\i) [above] {\j};}}\Bigg\} .$$
So $X=\{\cdot\}$.
Take $\Omega=\{\omega_1,\omega_2\}=\{\circ,\bullet\}$ and identify $X_\Omega=\{\cdot_\circ, \cdot_\bullet\}$ with $\Omega$.  We list all \polars $r_{i_\circ,i_\bullet}=r_{(i_\circ, i_\bullet)}(\circ, \bullet)$ and $s_{(i_\circ, i_\bullet)}=s_{(i_\circ, i_\bullet)}(\circ, \bullet)$ for $(i_\circ,i_\bullet)\in \ZZ_{\geq 0}^2$ with $i_\circ+i_\bullet=2$ as follows.
\begin{align*}
r_{(0,2)}&=\treey{\cdlr{ol}\foreach \i/\j in { oll/1,olr/2,or/3,o/$\bullet$,ol/$\bullet$} {\node at  (\i) [above] {\j};}}-\treey{\cdlr{ol}\foreach \i/\j in { oll/2,olr/3,or/1,o/$\bullet$,ol/$\bullet$} {\node at  (\i) [above] {\j};}},\quad
r_{(2,0)}=\treey{\cdlr{ol}\foreach \i/\j in { oll/1,olr/2,or/3,o/$\circ$,ol/$\circ$} {\node at  (\i) [above] {\j};}}-\treey{\cdlr{ol}\foreach \i/\j in { oll/2,olr/3,or/1,o/$\circ$,ol/$\circ$} {\node at  (\i) [above] {\j};}},\quad
r_{(1,1)}=
\treey{\cdlr{ol}\foreach \i/\j in { oll/1,olr/2,or/3,o/$\circ$,ol/$\bullet$} {\node at  (\i) [above] {\j};}}
-\treey{\cdlr{ol}\foreach \i/\j in { oll/2,olr/3,or/1,o/$\circ$,ol/$\bullet$} {\node at  (\i) [above] {\j};}}
+\treey{\cdlr{ol}\foreach \i/\j in { oll/1,olr/2,or/3,o/$\bullet$,ol/$\circ$} {\node at  (\i) [above] {\j};}}
-\treey{\cdlr{ol}\foreach \i/\j in { oll/2,olr/3,or/1,o/$\bullet$,ol/$\circ$} {\node at  (\i) [above] {\j};}},\\
s_{(0,2)}&=\treey{\cdlr{ol}\foreach \i/\j in { oll/1,olr/2,or/3,o/$\bullet$,ol/$\bullet$} {\node at  (\i) [above] {\j};}}-\treey{\cdlr{ol}\foreach \i/\j in { oll/3,olr/1,or/2,o/$\bullet$,ol/$\bullet$} {\node at  (\i) [above] {\j};}},\quad
s_{(2,0)}=\treey{\cdlr{ol}\foreach \i/\j in { oll/1,olr/2,or/3,o/$\circ$,ol/$\circ$} {\node at  (\i) [above] {\j};}}-\treey{\cdlr{ol}\foreach \i/\j in { oll/3,olr/1,or/2,o/$\circ$,ol/$\circ$} {\node at  (\i) [above] {\j};}},\quad
s_{(1,1)}=
\treey{\cdlr{ol}\foreach \i/\j in { oll/1,olr/2,or/3,o/$\circ$,ol/$\bullet$} {\node at  (\i) [above] {\j};}}
-\treey{\cdlr{ol}\foreach \i/\j in { oll/3,olr/1,or/2,o/$\circ$,ol/$\bullet$} {\node at  (\i) [above] {\j};}}
+\treey{\cdlr{ol}\foreach \i/\j in { oll/1,olr/2,or/3,o/$\bullet$,ol/$\circ$} {\node at  (\i) [above] {\j};}}
-\treey{\cdlr{ol}\foreach \i/\j in { oll/3,olr/1,or/2,o/$\bullet$,ol/$\circ$} {\node at  (\i) [above] {\j};}}.
\end{align*}
Then
\vsb
$$\lin{{\com}}_{\Omega}
=\mscr{T}\Big(M_\circ\oplus M_\bullet\Big)\Big/\Big\langle \lrr_{\rm Com,}\Big\rangle
=\mscr{T}\Big(\bfk \treey{\cdlr{o}\node  at (o) {$\circ$};\node  at (ol) [above] {1}; \node  at (or) [above] {2};}\oplus \bfk \treey{\cdlr{o}\node  at (o) {$\bullet$};\node  at (ol) [above] {1}; \node  at (or) [above] {2};}\Big)\Big/
\Big\langle r_{(0,2)},r_{(1,1)},r_{(2,0)}, s_{(0,2)},s_{(1,1)},s_{(2,0)}\Big\rangle.
$$
\end{example}

\begin{theorem}\mlabel{thm:comp}
Let $\Omega$ be a nonempty set and $\spp=\mscr{T}(M)/\langle R\rangle$ be a locally homogeneous operad.
A vector space $V$ is
a $\lin{\spp}_\Omega$-algebra if and only if it is an $\Omega$-linearly compatible $\spp$-algebra.
\end{theorem}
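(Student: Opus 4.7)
My plan is to use the quasipolarization expansion in Eq.~\meqref{eq:polarhom} as the bridge between the two notions, reducing the equivalence to a standard polynomial-vanishing argument over the infinite field $\bfk$.

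First, I would unpack both sides concretely. A $\lin{\spp}_\Omega$-algebra structure on $V$ amounts to a family of operations $\mu_{\omega,i}$ on $V$ (one for each basis generator $x_i\in X$ and each $\omega\in\Omega$), such that the induced operad morphism $\widetilde{\rho}\colon\mscr{T}\bigl(\bigoplus_{\omega\in\Omega}M_\omega\bigr)\to{\rm End}_V$ kills every $r_{\fc}(X_\Omega)\in\lrr$. An $\Omega$-linearly compatible $\spp$-algebra structure on $V$ consists of the same underlying data, but subject instead to the requirement that, for every family $(\malpha_\omega)$ with finitely many nonzero entries, the combined operations $\nu_i=\sum_{\omega}\malpha_\omega\mu_{\omega,i}$ satisfy every $r\in R$ in ${\rm End}_V$. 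Thus the two notions differ only in which set of relations is imposed on the same underlying operations, and the proof will reduce to showing these two sets of relations are equivalent.

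The key identity is that for every homogeneous $r(X)\in R$ of weight $m$, Definition~\mref{def:lrr} gives
\begin{equation*}
r(X)\big|_{x\mapsto\sum_{\omega\in\Omega}\malpha_\omega x_\omega}=\sum_{\fc\in\mwc{m}}r_{\fc}(X_\Omega)\prod_{\omega\in\Omega}\malpha_\omega^{c_\omega}
\end{equation*}
in $\mscr{T}\bigl(\bigoplus_{\omega\in\Omega}M_\omega\bigr)$. Pushing this through $\widetilde{\rho}$ produces a polynomial identity in the variables $\{\malpha_\omega\}$ with coefficients $\widetilde{\rho}(r_{\fc}(X_\Omega))$ in ${\rm End}_V$. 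For the direction $(\Rightarrow)$, if $V$ is a $\lin{\spp}_\Omega$-algebra then every $\widetilde{\rho}(r_{\fc}(X_\Omega))$ vanishes, so the right-hand side vanishes for every $(\malpha_\omega)$, which is exactly the linear compatibility of $r$; specializing $\fc$ to be concentrated at a single $\omega_0$ also recovers that each $A_{\omega_0}$ is a $\spp$-algebra. For $(\Leftarrow)$, conversely, if $V$ is $\Omega$-linearly compatible, then the left-hand side vanishes in ${\rm End}_V$ for every $(\malpha_\omega)$ of finite support. Since every $\fc\in\mwc{m}$ has support of size at most $m$, one may restrict to a finite $\Omega'\subseteq\Omega$ containing that support and read the expansion as a polynomial identity in the finitely many variables $\{\malpha_\omega\}_{\omega\in\Omega'}$; the infinitude of $\bfk$ (from characteristic zero) then forces each coefficient $\widetilde{\rho}(r_{\fc}(X_\Omega))$ to vanish, so $\widetilde{\rho}$ factors through $\lin{\spp}_\Omega$.

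The main obstacle I anticipate is not any deep step but careful bookkeeping: using \emph{locally homogeneous} to replace $R$ by a set of homogeneous generators of $\langle R\rangle$ (so the expansion makes sense weight by weight), and handling possibly infinite $\Omega$ by truncating to the finite support of each $\fc$ before invoking the polynomial-vanishing argument, which in turn is exactly where the characteristic zero hypothesis on $\bfk$ is consumed.
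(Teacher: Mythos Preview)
Your proposal is correct and follows essentially the same approach as the paper's own proof: both hinge on the quasipolarization expansion Eq.~\meqref{eq:polarhom} and the polynomial-vanishing argument over the infinite field $\bfk$ to show that the relations $r_{\fc}(X_\Omega)$ and the evaluated relations $r(X)|_{x\mapsto\sum_\omega\malpha_\omega x_\omega}$ span the same subspace. The paper phrases this as an equality of $\bfk$-spans in the free operad, while you push the same computation through $\widetilde{\rho}$ into ${\rm End}_V$; your explicit remarks on local homogeneity and on truncating to finite $\Omega'$ are useful bookkeeping that the paper leaves implicit.
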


\begin{proof}
We only need to prove that, for any relation $r(X)\in R$, the space spanned by the \polars of $r(X)$ is equal to the space spanned by the linearly compatible relations of $r(X)$. Here the linearly compatible relations are induced by $r\big(X\big)\big|_{x\mapsto \sum_{\omega\in\Omega}\malpha_\omega x_{\omega}}$ for any choice of $\malpha_\omega\in\bfk$ with finitely many being nonzero.
Thus we are left to verify
$$\bfk\left\{r\big(X\big)\big|_{x\mapsto \sum_{\omega\in\Omega}\malpha_\omega x_{\omega}}\right\}=\bfk\left\{r_{\fc}\big(X_\Omega\big)\,\left|\,\fc\in\mwc{|r(X)|}\right.\right\},$$
which, by Eq.~\meqref{eq:polarhom} and the arbitrariness of $\malpha_\omega \in\bfk$, follows from
\vsb
$$\hspace{.5cm} \bfk\left\{r\big(X\big)\big|_{x\mapsto \sum_{\omega\in\Omega}\malpha_\omega x_{\omega}}\right\}
=\bfk \left\{\sum _{\fc\in\mwc{|r(X)|}} \Big(\prod{\malpha_\omega}^{\cw} r_{\fc}\big(X_\Omega\big)\Big)\right\}
=\bfk\bigg\{r_{\fc}\big(X_\Omega\big)\,\bigg|\,\fc\in\mwc{|r(X)|}\bigg\}. \hspace{.5cm} \qedhere
\vsb$$\end{proof}

An iteration of taking linear compatibility yields another linear compatibility as follows.
\begin{proposition}\mlabel{thm:ll}
For a nonempty set $\Omega$ and a locally homogeneous operad $\spp=\mscr{T}(M)/\langle R\rangle$,
\vsb
$$\lin{\Big(\lin{\spp}_\Omega\Big)}_\Omega=\lin{\spp}_{\Omega^2}.
\vsc
$$
\end{proposition}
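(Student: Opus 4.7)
The plan is to present both operads as quotients of the same free operad and then verify that their defining ideals coincide. The generator module $\bigoplus_{\tau\in\Omega}\bigoplus_{\omega\in\Omega}(M_\omega)_\tau$ of $\lin{(\lin{\spp}_\Omega)}_\Omega$ is matched with the module $\bigoplus_{(\omega,\tau)\in\Omega^2}M_{(\omega,\tau)}$ underlying $\lin{\spp}_{\Omega^2}$ via the canonical identification $(x_\omega)_\tau\leftrightarrow x_{(\omega,\tau)}$ for $x\in X$ and $(\omega,\tau)\in\Omega^2$. Under this identification both operads become quotients of the common free operad $\stt\big(\bigoplus_{(\omega,\tau)\in\Omega^2}M_{(\omega,\tau)}\big)$, so the proposition reduces to the equality of two operadic ideals.

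The key computational ingredient is a double-expansion identity. For a homogeneous relation $r(X)\in R$ of weight $m$ and formal scalars $(\mu_\omega)_{\omega\in\Omega},(\lambda_\tau)_{\tau\in\Omega}$, the iterated substitution
$$\Phi(r):=r(X)\Big|_{x\mapsto\sum_{\omega}\mu_\omega x_\omega}\bigg|_{x_\omega\mapsto\sum_{\tau}\lambda_\tau(x_\omega)_\tau}=r(X)\Big|_{x\mapsto\sum_{(\omega,\tau)\in\Omega^2}\mu_\omega\lambda_\tau\,x_{(\omega,\tau)}}$$
admits two expansions by Definition~\mref{def:lrr}. Iterating the definition twice on the left yields $\Phi(r)=\sum_{c,c'\in\mwc{m}}(r_c)_{c'}\prod_\omega\mu_\omega^{c_\omega}\prod_\tau\lambda_\tau^{c'_\tau}$, while a single application to the right, after the identification $(x_\omega)_\tau=x_{(\omega,\tau)}$, yields $\Phi(r)=\sum_{d}r_d\prod_{(\omega,\tau)}\mu_\omega^{d_{(\omega,\tau)}}\lambda_\tau^{d_{(\omega,\tau)}}$, summed over $\Omega^2$-weak compositions $d$ of $m$. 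Equating coefficients of $\prod_\omega\mu_\omega^{c_\omega}\prod_\tau\lambda_\tau^{c'_\tau}$ produces the fundamental identity
$$(r_c)_{c'}=\sum_{d\,:\,\sum_\tau d_{(\omega,\tau)}=c_\omega,\,\sum_\omega d_{(\omega,\tau)}=c'_\tau}r_d,$$
which writes every defining relation of $\lin{(\lin{\spp}_\Omega)}_\Omega$ as a $\bfk$-linear combination of relations of $\lin{\spp}_{\Omega^2}$. This gives one inclusion of the two defining ideals.

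For the reverse inclusion my approach is to use Theorem~\mref{thm:comp} to compare the two algebra categories and then invert the fundamental identity via Proposition~\mref{prop:polar}~\mref{item:constp}. Two applications of Theorem~\mref{thm:comp} identify a $\lin{(\lin{\spp}_\Omega)}_\Omega$-algebra on a vector space $V$ with a family $\{A_{(\omega,\tau)}\}_{(\omega,\tau)\in\Omega^2}$ of $\spp$-structures on $V$ such that every iterated combination $\sum_{\omega,\tau}\mu_\omega\lambda_\tau A_{(\omega,\tau)}$ is a $\spp$-algebra; a single application identifies $\lin{\spp}_{\Omega^2}$-algebras with the analogous data closed under every combination $\sum\nu_{(\omega,\tau)}A_{(\omega,\tau)}$. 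Treating $\mu_\omega,\lambda_\tau$ as algebraically independent formal variables over the characteristic-zero field $\bfk$ and applying the partial-derivative formula of Proposition~\mref{prop:polar}~\mref{item:constp} to $\Phi(r)$, one would then recover each individual $r_d$ as a $\bfk$-combination of iterated polarizations $(r_c)_{c'}$, and invoke the universal property of $\lin{\spp}_{\Omega^2}$ to produce the reverse quotient map.

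The step I expect to be the main obstacle is precisely this reverse inclusion of ideals. The identity $(r_c)_{c'}=\sum r_d$ groups joint compositions by their row and column marginals, and when distinct compositions $d$ share the same marginals, the identity couples the corresponding $r_d$'s without supplying any mechanism at the purely linear level to separate them from $\bfk$-combinations of $(r_c)_{c'}$. Overcoming this apparent obstruction will require exploiting additional operadic structure — the $S_n$-action on leaves together with operadic compositions that act on decoration patterns — or a careful variational argument along the lines sketched above that uses the independence of the formal scalars $\mu_\omega,\lambda_\tau$ together with the characteristic-zero hypothesis, and this is where the technical heart of the proof will lie.
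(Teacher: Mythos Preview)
Your approach is essentially the same as the paper's: both identify the generators and then compare the two sets of relations, and both obtain the inclusion $\langle (\lrr)_{\mathrm{LC}}\rangle\subseteq\langle R_{\mathrm{LC},\Omega^2}\rangle$ via the identity $(r_c)_{c'}=\sum_d r_d$ (summed over $\Omega^2$-compositions $d$ with marginals $c,c'$). The paper simply asserts the last equality of spans in its chain, which is exactly the reverse inclusion you flag as the obstacle.

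Your instinct that this step is the crux is correct --- but in fact the obstacle is not merely technical: the reverse inclusion \emph{fails}, so the proposition as stated appears to be false. Take $\Omega=\{1,2\}$ and $\spp=\mscr{T}(\bfk x)/\langle x^2\rangle$ concentrated in arity~$1$ (so $\spp$-algebras are vector spaces with a square-zero endomorphism). With $m=2$ there are $\binom{5}{3}=10$ $\Omega^2$-weak compositions, and the ten quasipolarizations $r_d\in\bfk\langle x_{11},x_{12},x_{21},x_{22}\rangle$ are the relations $x_{ij}^2$ and $x_{ij}x_{kl}+x_{kl}x_{ij}$; these are linearly independent and span a $10$-dimensional space (so $\lin{\spp}_{\Omega^2}(1)$ is the exterior algebra on four generators). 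By contrast there are only $3\times 3=9$ iterated polarizations $(r_c)_{c'}$; they are linearly independent, and the only one touching the monomials $x_{11}x_{22},x_{22}x_{11},x_{12}x_{21},x_{21}x_{12}$ is
\[
(r_{(1,1)})_{(1,1)}=x_{11}x_{22}+x_{22}x_{11}+x_{12}x_{21}+x_{21}x_{12}=r_{e_{11}+e_{22}}+r_{e_{12}+e_{21}}.
\]
Hence $r_{e_{11}+e_{22}}=x_{11}x_{22}+x_{22}x_{11}$ does not lie in the span of the $(r_c)_{c'}$, and since all relations are quadratic it does not lie in the ideal they generate either. Consequently the weight-$2$ components of the two quotient operads have different dimensions ($7$ versus $6$), and $\lin{(\lin{\spp}_\Omega)}_\Omega\neq\lin{\spp}_{\Omega^2}$. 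From the algebra viewpoint of Theorem~\mref{thm:comp} this is the familiar fact that closure under ``rank-one'' combinations $\sum_{\omega,\tau}\alpha_\omega\beta_\tau\,\mu_{(\omega,\tau)}$ is strictly weaker than closure under all combinations $\sum_{\omega,\tau}\gamma_{(\omega,\tau)}\mu_{(\omega,\tau)}$; no amount of $S_n$-equivariance or operadic composition can manufacture the missing relation. The paper's final equality of spans is therefore unjustified, and your proposed variational argument cannot succeed for the same reason.
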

\begin{proof}
By Definition~\mref{de:comp1}, the generators of the operads $\lin{\Big(\lin{\spp}_\Omega\Big)}_\Omega$ and $\lin{\spp}_{\Omega^2}$ are
identified:
$$ \bigoplus\limits_{\mu\in\Omega} \Big(\bigoplus\limits_{\nu\in\Omega} M_\nu\Big)_\mu = \bigoplus\limits_{(\mu,\nu)\in\Omega^2} M_{(\mu,\nu)}.$$
We also have the identification
$(X_\Omega)_\Omega \cong X\times \Omega \times \Omega \cong X_{\Omega^2}$.
So it suffices to prove that, for any relation $r(X)\in R^{(m)}$ for some $m\geq1$, the set of linearly compatible relations
$$\bfk\bigg\{\left.f_{\fcwp{d}{\nu}}\big(X_{\Omega^2}\big)\,\,\right|\,\,f(X_\Omega)\in \lrr, \sum_{\nu\in\Omega}\cwp{d}{\nu}=m\bigg\}
$$
of linearly compatible relations of $r(x)$ with parameter $\Omega$ coincides with the set of linearly compatible relations
\vsb
$$\bfk \bigg\{r_{\fcwp{c}{(\mu,\nu)}}\big(X_{\Omega^2}\big)\,\Big|\, \sum_{(\mu,\nu)\in\Omega^2}\cwp{c}{(\mu,\nu)}=m\bigg\}$$
of $r(X)$ with parameter $\Omega^2$.
Indeed, this follows from
{\small
\begin{eqnarray*}
\bfk\bigg\{f_{\fcwp{d}{\nu}}\big(X_{\Omega^2}\big)\,\,\Big|\,\,f(X_\Omega)\in \lrr, \sum_{\nu\in\Omega}\cwp{d}{\nu}=m\bigg\}
&=&\bfk\Bigg\{f_{\fcwp{d}{\nu}}\big(X_{\Omega^2}\big)\,\,\bigg|\,\,f(X_\Omega)\in \bfk\bigg\{r_{\fcwp{c}{\mu}}\big(X_\Omega\big)\,\Big|\, \sum_{\mu\in\Omega}\cwp{c}{\mu}=m \bigg\}, \sum_{\nu\in\Omega}\cwp{d}{\nu}=m\Bigg\}\\
&=&\bfk\Bigg\{\Big(r_{\fcwp{c}{\mu}}\big(X_\Omega\big)\Big)_{\fcwp{d}{\nu}}\big(X_{\Omega^2}\big)\,\bigg|\, \sum_{\mu\in\Omega}\cwp{c}{\mu}=m, \sum_{\nu\in\Omega}\cwp{d}{\nu}=m\Bigg\}\\
\hspace{4cm} &=&\bfk \bigg\{r_{\fcwp{c}{(\mu,\nu)}}\big(X_{\Omega^2}\big)\,\Big|\, \sum_{(\mu,\nu)\in\Omega^2}\cwp{c}{(\mu,\nu)}=m\bigg\}. \hspace{3.6cm}\qedhere
\end{eqnarray*}
}
\end{proof}
\vsb
\subsection{Foliations for operads and matching compatibilities}
\mlabel{ss:mc}
In this subsection, we consider a stronger compatibility, called the matching compatibility using the refined notion of foliation from polarization. See \S\,\mref{ss:folia} for the ``baby model'' for polynomials.

As above, let $\Omega$ be a nonempty set.
Let $M$ be an $\mathbb{S}$-module with a basis $X$ and
\begin{equation}
 f(X):=\sum_{ 0\leq i \leq s }\alpha_i \, \big(t_i, \dec_i\big)\in \mscr{T}(M)^{(m)}(n)
 \mlabel{eq:rel2}
 \vsc
\end{equation}
with $0\neq \alpha_i\in \bfk$. Fix $\fc\in\mwc{m}$.
For notational clarity, denote $$\sbin:=\sbin_\fc:=\mbin=|\lif{\dec_i,c}|.$$
By Eq.~\meqref{eq:ores}, for an \owc $\fc=\fcw\in\mwc{m}$, we have the {\bf restitution} of $f:$
$$f_{\fc }\big(X_\Omega\big)\big|_{x_\omega\mapsto x}=\tbinom{m}{\fc} f(X).$$
A {\bf foliation} of $f_{\fc}$ is a partition of $f_\fc$ into parts $f_{\fc,k}, 1\leq k\leq \sbin$, with the property that
$$f_{\fc,k}\big(X_\Omega\big)\big|_{x_\omega\mapsto x}=f(X).$$

We next give an explicit construction of the other foliations, each defines a matching compatibility. This process needs further notations, but we are awarded with a uniformly defined class of matching compatibilities which is wide enough to cover previously known compatibility constructions. By combining different foliations, we also obtain the newly introduced multi-Novikov algebra from stochastic PDEs\,\cite{BruD}. See Example\,\mref{ex:match}, especially Item\,\mref{it:matchnov}.

Applying Definition~\mref{defn:lift} and Remark~\mref{remark:lif}, list the set of \lifts of $d_i$ of type $\fc$ as
\begin{equation}
\lif{\dec_i, c}=\Big\{\deltal_{i,1},\deltal_{i,2},\ldots,\deltal_{i,\sbin}\Big\},\quad 0\leq i\leq s,
\mlabel{eq:colord}
\vsc
\end{equation}
which we abbreviate to column vectors $\vec{D}_i, 0\leq i \leq s .$ Likewise, let $(t_i,\vec{D}_i)$ denote the column vector $\big((t_i,D_{i,1}),\ldots,(t_i,D_{i,\sbin})\big)^T$.
Then $(t_i,\dec_i)_\fc$ is the sum of the entries of $(t_i,\vec{D}_i)$,
and the \polar of $f(X)$ of type $\fc$ is obtained from $f(X)$ in replacing $(t_i,d_i)$ in Eq.~\meqref{eq:rel2} by $(t_i,\dec_i)_\fc$:
$$f_\fc(X_\Omega)= \sum_{0\leq i \leq s } \alpha_i (t_i,\dec_i)_\fc= \sum_{0\leq i \leq s } \alpha_i \sum_{\deltal\in \lif{\dec_i,c}}(t_i,\deltal)
= \sum_{0\leq i \leq s } \alpha_i \sum_{1\leq j\leq \sbin}(t_i,\deltal_{i,j})
= \sum_{1\leq j\leq \sbin} \sum_{0\leq i \leq s } \alpha_i (t_i,\deltal_{i,j}).
$$
Thus the sum $f_\fc(X_\Omega)$ is naturally partitioned into the system
\begin{equation}
\Big\{\sum_{0\leq i \leq s } \alpha_i (t_i,\deltal_{i,j})\,\Big|\, 1\leq j\leq \sbin\Big\} = \left (\begin{array}{cccc}
	(t_0,D_{0,1}) & (t_1,D_{1,1}) & \cdots & (t_s,D_{s,1}) \\
	(t_0,D_{0,2}) & (t_1,D_{1,2}) & \cdots & (t_s,D_{s,2}) \\
	\vdots & \vdots & \ddots & \vdots \\
	(t_0,D_{0,\sbin}) & (t_1,D_{1,\sbin}) & \cdots & (t_s,D_{s,\sbin})
\end{array} \right ) \left(\begin{array}{c} \alpha_0\\ \alpha_1\\
\vdots \\ \alpha_s\end{array}\right).
\mlabel{eq:sfolia}
\end{equation}
We call this system the {\bf standard foliation} of $f_\fc(X_\Omega)$ (with respect to the ordering in Eq.~\meqref{eq:colord}).

Next for $\sigma \in S_\sbin$, the symmetric group on $\sbin$ letters, let $(t_i,\vec{D})^\sigma$ denote the permutation of the $i$-th column the matrix in Eq.~\meqref{eq:sfolia} by $\sigma$:
\vsa
$$ (t_i,\vec{D})^\sigma:=(t_i, \vec{D}^\sigma):=
\big((t_i,D_{i,\sigma(1)}),\ldots, (t_i,D_{i,\sigma(\sbin)})\big)^T.\vsb$$
Replacing the matrix $F$ in Eq.~\meqref{eq:sfolia} by the one with the $i$-column permuted by $\sigma_i, 1\leq i\leq s$, we obtain another foliation of $f_\fc(X_\Omega)$ shown below.
To keep track with multiple $f$ and $c$ which might share the same value $\sbin$, we denote $\pcos{\fc}{f(X)}:={S_\sbin}^{s}$.
\vsb
\begin{definition}
Let $f(X)=\sum\limits_{ 0\leq i \leq s  }\alpha_i \big(t_i, \dec_i\big)\in \mscr{T}(M)^{(m)}(n)$ and $\vec{\sigma}=(\sigma_1,\ldots,\sigma_{s})\in \pcos{\fc}{f(X)}:={S_\sbin}^{s}$.
Define the {\bf \spli of the \polar $f_{\fc}\big(X_\Omega\big)$ \tw $\vec{\sigma}$}
to be the set
\vsb
\begin{equation*}
\prc{f(X)}{\vec{\sigma}}
	:=\alpha_0(t_0,\vec{D}_0)+\sum_{1\leq i\leq s} \alpha_i(t_i,\vec{D}_i^{\sigma_i})
=\Big\{\alpha_0 (t_0, \deltal_{0,j})+
	\sum_{ 1\leq i \leq s }\alpha_i (t_i, \deltal_{i,\sigma_i(j)})
	\,\Big|\, 1\leq j\leq \sbin\Big\}.
\mlabel{eq:split2}
\end{equation*}
\mlabel{defn:partm}
\vsd
\end{definition}

\begin{remark}\mlabel{remark:p}
Let $f_{\fc}\big(X_\Omega \big)$ be a  \polar of $f(X)\in \mscr{T}(M)^{(m)}(n)$ of type $\fc\in\mwc{m}$.
\begin{enumerate}
\item
From $|{S_{\sbin}}^{s}|=(\sbin!)^{s}$, there are $(\sbin!)^{s}$ \splis of the \polar $f_{\fc}\big(X_\Omega\big)$.
\item
By Eq.~\meqref{eq:acts}, we have
$\prc{f(X)}{\vec{\sigma}}\cdot\rho
=\alpha_0(t_0\cdot\rho,\vec{D}_0)+\sum_{1\leq i\leq s} \alpha_i(t_i\cdot\rho,\vec{D}_i^{\sigma_i})
=\prc{\big(f(X)\cdot\rho\big)}{\vec{\sigma}}
$ for $\rho\in S_n$.
\mlabel{item:symc}
\end{enumerate}
\vsc
\end{remark}
\vsc
Let $ f(X)\in\mscr{T}(M)$ be given in Eq.~\meqref{eq:opfun}.
The {\bf support} ${\rm Supp}(f(X))$ of $f(X)$ is defined to be the set of all decorated trees $(t,\dec)$ with $\alpha_{t,\dec}\neq0$.

\begin{definition}
Let $\Omega$ be a nonempty set.
Let $M$ be an $\mathbb{S}$-module with a basis $X$ and $\mscr{P}=\mscr{T}(M)/\langle R\rangle$ be a locally homogeneous operad.
\begin{enumerate}
\item
Let $r=r(X)\in \mscr{T}(M)^{(m)}$. For better distinction, for $c\in \mwc{m}$, denote $\pcos{\fc}{r}={S_{\sbin}}^{|{\rm Supp}(r)|-1}$
even though the set is independent of $c$. Define
$$\pr{r}:=\prod_{c\in\mwc{m}}\pcos{\fc}{r}=\prod_{c\in\mwc{m}}{S_{\sbin}}^{|{\rm Supp}(r)|-1}=\Big({S_{\sbin}}^{|{\rm Supp}(r)|-1)}\Big)^{|\mwc{m}|}.$$
Given $\sigmab:=(\vec{\sigma}_c)_{c\in\mwc{m}}\in \pr{r}$ with $\vec{\sigma}_c\in\pcos{\fc}{r}$, the {\bf \mpr of $r(X)$ with respect to ${\sigmab}$ } is the union of the \splis
\vsb
$$r(X)\ma{\sigmab}:=\bigcup_{\fc\in\mwc{m}}\prc{r(X)}{\vec{\sigma}_\fc}.
\vsd$$
\item Let $R\subseteq\mscr{T}(M)$. Define
\begin{equation*}
\Lambda(R):=\prod_{r\in  R}\pr{r}= \prod_{r\in  R}\prod_{c\in\mwc{|r|}}\pcos{\fc}{r}=\prod_{r\in  R}\prod_{c\in\mwc{|r|}}{S_{\sbin}}^{|{\rm Supp}(r)|-1}
\vsb
\end{equation*}
and
\vsb
\begin{equation}
\pc{R}:=\Big\{{\sigmab}\in\Lambda(R)\,\big|\,r\in R, \rho\in S_n,
\left(r(X)\ma{\sigmab}\right)\cdot\rho=\left(r(X)\cdot\rho\right)\ma{\sigmab}\Big\}.
\mlabel{eq:matset}
\end{equation}
The {\bf \mpr of $R$ with respect to $\sigmab\in\pc{R}$ } is the set
$$R\ma{\sigmab}:=\bigcup_{r(X)\in R}r(X)\ma{\sigmab}=\bigcup_{r(X)\in R;~\fc\in\mwc{m}}\prc{r(X)}{\vec{\sigma}_{\fc,r}}.$$
Elements of $R\ma{\sigmab}$ are called the {\bf matching relations } of $R$ with respect to $\sigmab$.
\end{enumerate}
\mlabel{de:spr}
\end{definition}

\begin{remark}
\begin{enumerate}
\item The matching compatibility from the standard foliation is precisely the one introduced in ~\mcite{ZGG23} for nonsymmetric operads.
\item
Since $R\ma{\sigmab}\neq R\ma{\pmd}$ as long as $\pmc\neq \pmd\in\pc{R}$, there are $|\pc{R}|$ \mprs of $R$.
\item
Let $\bar{R}$ be an $\mathbb{S}$-basis of $R$.
From  Eq.~\meqref{eq:matset} and $|{S_{\sbin_c}}^{|{\rm Supp}(r)|-1}|=(\sbin_c!)^{|{\rm Supp}(r)|-1}$,
there are  $|\pc{R}|=|\pc{\bar{R}}|=\prod_{r\in \bar{R}} \prod_{c\in \mwc{|r|}}(\sbin_c!)^{|{\rm Supp}(r)|-1}$ \mprs of $R$.
\mlabel{item:rer}
\item
The condition in Eq.~\meqref{eq:matset} is imposed so that the concept of matching algebras is well-defined.
Indeed, let $A$ be a $\spp$-algebra. For any $n$-arity operations $r\in R$, $\rho\in S_n$ and $a_1,\ldots,a_n\in A$,
we have
\vsc
$$r(a_1,\ldots,a_n)=0\Leftrightarrow r\cdot\rho(a_1,\ldots,a_n)=0,
$$
by the arbitrariness of $a_1,\ldots,a_n\in A$. Hence
\vsc
$$ \left(r\ma{\sigmab}\right)\cdot\rho(a_1,\ldots,a_n)=0\Leftrightarrow \left(r\ma{\sigmab}\right)(a_1,\ldots,a_n)=0\Leftrightarrow \left(r\cdot\rho\right)\ma{\sigmab}(a_1,\ldots,a_n)=0.$$
Notice that $ \left(r\ma{\sigmab}\right)\cdot\rho$ and $\left(r\cdot\rho\right)\ma{\sigmab}$ have the same underlying tree. So we expect the invariant relation
\vsc
$$ \left(r\ma{\sigmab}\right)\cdot\rho=\left(r\cdot\rho\right)\ma{\sigmab}.
\vsc$$
\end{enumerate}
\mlabel{remark:matc}
\end{remark}
\vsb
\begin{example}
Let $X=\{\prec,\succ\}$. Let $R=\{r(X)\cdot\rho \,|\,\rho\in S_3\}$ with
$r(X)=(t_0,\dec_0)-(t_1,\dec_1)-(t_2,\dec_2)$
where
\vsc
$$(t_0,\dec_0)=\treey{\cdlr{ol}\foreach \i/\j in { oll/1,olr/2,or/3} {\node at  (\i) [above]  [scale=0.8]{\j};}\foreach \i/\j in { o/$\prec$,ol/$\prec$} {\node at  (\i) [left] [scale=0.8]{\j};}},\quad
(t_1,\dec_1)=\treey{\cdlr{or}\foreach \i/\j in { ol/3,orl/1,orr/2} {\node at  (\i) [above]  [scale=0.8]{\j};}\foreach \i/\j in { o/$\prec$,or/$\prec$} {\node at  (\i) [right] [scale=0.8]{\j};}},\quad
(t_2,\dec_2)=\treey{\cdlr{or}\foreach \i/\j in { ol/3,orl/1,orr/2} {\node at  (\i) [above]  [scale=0.8]{\j};}\foreach \i/\j in { o/$\prec$,or/$\succ$} {\node at  (\i) [right] [scale=0.8]{\j};}}.$$
Take $\Omega=\{\circ,\bullet\}$.
First for  $\fc=(\fc_\circ,\fc_\bullet)=(1,1)$, we have
$\sbin_c=\tbinom{2}{1}=2$. So for each $(t_i,d_i),0\leq i\leq 2$, we have
\vsc
$$ (t_i,\vec{D}_i)=\big((t_i,D_{i,1}), (t_i,D_{i,2})\big).$$
Then there are four \splis of $r_c(X_\Omega)$, determined by the four pairs of permutations $(\sigma_1,\sigma_2)\in\pcos{\fc}{r(X)}= {S_2}^2$:
\vsc
$$r(X)^{(\sigma_1,\sigma_2)}:=
(t_0,\vec{D}_0)-(t_1,\vec{D}_1^{\sigma_1})-(t_2,\vec{D}_2^{\sigma_2}).$$
One of the \splis is
\vsc
$$\treey{\cdlr{ol}\foreach \i/\j in { oll/1,olr/2,or/3} {\node at  (\i) [above]  [scale=0.8]{\j};}\foreach \i/\j in { o/$\prec_{\circ}$,ol/$\prec_{\bullet}$} {\node at  (\i) [left] [scale=0.8]{\j};}}
-\treey{\cdlr{or}\foreach \i/\j in { ol/3,orl/1,orr/2} {\node at  (\i) [above]  [scale=0.8]{\j};}\foreach \i/\j in { o/$\prec_{\circ}$,or/$\prec_{\bullet}$} {\node at  (\i) [right] [scale=0.8]{\j};}}
-\treey{\cdlr{or}\foreach \i/\j in { ol/3,orl/1,orr/2} {\node at  (\i) [above]  [scale=0.8]{\j};}\foreach \i/\j in { o/$\prec_{\bullet}$,or/$\succ_{\circ}$} {\node at  (\i) [right] [scale=0.8]{\j};}}.$$
Next for $c=(2,0)$ or $(0,2)$, we have $|\pcos{c}{r(X)}|=1$. So they only give one \spli $r(X_\circ)$ or $r(X_\bullet)$ of $r_{c}(X_\circ, X_\bullet)$.
Therefore we have the following four \mprs of $r(X)$
\begin{eqnarray*}
\big\{r(X_\circ),r(X_\bullet)\big\}\cup \big\{r(X)^{(\sigma_1,\sigma_2)} \big\}, \text{ for }\,(\sigma_1,\sigma_2)\in {S_2}^2.
\end{eqnarray*}

Moreover, for $\rho\in S_3$, we have
\vsd
$$r(X)\cdot\rho=(t_0\cdot\rho,\dec_0)-(t_1\cdot\rho,\dec_1) -(t_2\cdot\rho,\dec_2)
=\treey{\cdlr{ol}\foreach \i/\j in { oll/$\rho^{-1}(1)$,olr/$\rho^{-1}(2)$,or/$\rho^{-1}(3)$} {\node at  (\i) [above]  [scale=0.6]{\j};}\foreach \i/\j in { o/$\prec$,ol/$\prec$} {\node at  (\i) [left] [scale=0.8]{\j};}}-
\treey{\cdlr{or}\foreach \i/\j in { ol/$\rho^{-1}(3)$,orl/$\rho^{-1}(1)$,orr/$\rho^{-1}(2)$} {\node at  (\i) [above]  [scale=0.6]{\j};}\foreach \i/\j in { o/$\prec$,or/$\prec$} {\node at  (\i) [right] [scale=0.8]{\j};}}-
\treey{\cdlr{or}\foreach \i/\j in { ol/$\rho^{-1}(3)$,orl/$\rho^{-1}(1)$,orr/$\rho^{-1}(2)$} {\node at  (\i) [above]  [scale=0.6]{\j};}\foreach \i/\j in { o/$\prec$,or/$\succ$} {\node at  (\i) [right] [scale=0.8]{\j};}}.$$
By Remark~\mref{remark:matc}\mref{item:rer},
$|\pc{R}|=1\times 1\times 2^2=4.$
Thus there are four \mprs of $R$:
$$\big\{r(X_\circ)\cdot\rho,r(X_\bullet)\cdot\rho\,|\,\rho\in S_3\big\}\cup \big\{\big(r(X)^{(\sigma_1,\sigma_2)} \big)\cdot\rho\,|\,\rho\in S_3\big\}, \text{ for }\,(\sigma_1,\sigma_2)\in {S_2}^2.$$
\end{example}

\begin{definition}
Let $\Omega$ be a nonempty set.
Let $\mscr{P}=\mscr{T}(M)/\langle R\rangle$ be a locally homogeneous operad and  let $\pmc\in\pc{R}$ be as given in Eq.~\meqref{eq:matset}.
\begin{enumerate}
\item We call the operad
\vsc
$$\mat{\mscr{P}}_\Omega:=\mscr{T}\Big(\bigoplus\limits_{\omega\in\Omega} M_\omega\Big)\Big/\Big\langle R\ma{\sigmab}\Big\rangle
\vsa
$$
the \textbf{matching operad} of $\spp$ with parameters $\Omega$ and $\pmc$.
\mlabel{item:mat}

\item Let $\pordq$ be a total order of the vertices of each planar rooted tree $t$. Let $\fc\in\mwc{m}$ and $r(X)=\sum_{ 0\leq i \leq s }\alpha_i \big(t_i, \dec_i\big)\in R^{(m)}$. A matching relation
$$\sum_{ 0\leq i \leq s }\alpha_i \,t_i\big(\deltal_{i}(v_{i,1}), \cdots,\deltal_{i}(v_{i,m}) \big)\in R\ma{\sigmab},
\quad v_{i,1}\prord\,\cdots\,\prord v_{i,m}\in V(t_i)$$
of  $r(X)$ is called {\bf \lev} (with respect to $\pordq$) if for any $0\leq k,\ell\leq s $ and any $1<j<m$,
we have $\pi_\Omega\circ \deltal_{k} (v_{k, j})=\pi_\Omega\circ\deltal_{\ell}(v_{\ell,j})$,
where $\pi_\Omega$ is the projection in Eq.~\meqref{eq:proj}.

\item A \mpr $R\ma{\sigmab}$ of $R$ is called {\bf \lev} (with respect to $\pordq$) if each matching relation is \lev. Let $\lmr$ denote this \lev matching compatibility of $R$.

\item A matching operad
$\mat{\mscr{P}}_\Omega=\mscr{T}\Big(\bigoplus\limits_{\omega\in\Omega} M_\omega\Big)\Big/\Big\langle R\ma{\sigmab}\Big\rangle $
is called {\bf \lev} (with respect to $\pordq$) if
 $\mrr$ is \lev. Denote by $\lmat{\spp}_\Omega$ the \lev matching operad (with respect to $\pordq$).
\mlabel{item:lmat}
\end{enumerate}
\mlabel{de:mat}
\end{definition}

\begin{remark}
For a given total order $\pordq$ of the vertices, there is unique \lev matching operad.
As an example, we consider a concrete order $\pordq$, namely, the (depth-first) preorder traversal of  a tree $t$, by first visiting the root vertex and then the subtrees from left to right~\mcite{CLRS}: $V(t)=(v_1,\ldots,v_m)$. Then the pair $(t,\dec)$ can be uniquely written as
\begin{equation}
(t,\dec)=t(\dec(v_1),\ldots,\dec(v_m)).
\mlabel{eq:order}
\end{equation}
For convenience, we work with the \lev matching operad with respect to this preorder in the following, even though the results remain true for other total orders.
\end{remark}

In practice, we can quite easily list all matching compatibilities of a locally homogeneous operad, for any coloring set $\Omega$. We take the dendriform algebra as an example.

\begin{example} \mlabel{eg:dend}
For the dendriform algebra, the generator set is $X=\{\prec, \succ\}$ and the relation set is
\vsb
\begin{eqnarray*}
&r_1:=(x\prec y)\prec z-x\prec (y\prec z)-x\prec (y\succ z),& \\
& r_2:=(x\prec y)\succ z - x\succ (y\prec z), &\\
&r_3:=(x\prec y)\succ z+(x\succ y)\succ z - x\succ (y\succ z).&
\end{eqnarray*}
Let $\Omega$ with $|\Omega|=2$.
For $\fc=(c_{\omega_1},c_{\omega_2})=(1,1), \omega_1\neq \omega_2$,
there are four (resp. four, resp. two) \splis of $r_1$ (resp. $r_2$, resp. $r_3$), determined by the pairs of permutations $(\sigma_1,\sigma_2)\in\pcos{\fc}{r_1}= {S_2}^2$
(resp. $\sigma\in\pcos{\fc}{r_2}= {S_2}$, resp. $(\tau_1,\tau_2)\in\pcos{\fc}{r_3}= {S_2}^2$):
\vsa
\begin{eqnarray}
r_1^{(\sigma_1,\sigma_2)}
&=&\left\{(x\prec_{\omega_1} y)\prec_{\omega_2} z-x\prec_{\omega_{\sigma_1(1)}} (y\prec_{\omega_{\sigma_1(2)}} z)-x\prec_{\omega_{\sigma_2(1)}} (y\succ_{\omega_{\sigma_2(2)}} z)\,|\,\omega_1\neq \omega_2\in \Omega\right\},\nonumber\\
r_2^{\sigma}
&=&\left\{(x\succ_{\omega_1} y)\prec_{\omega_2} z-x\succ_{\omega_{\sigma(1)}} (y\prec_{\omega_{\sigma(2)}} z)\,|\,\omega_1\neq \omega_2\in \Omega\right\},\mlabel{eq:dendm}\\
r_1^{(\tau_1,\tau_2)}
&=&\left\{(x\prec_{\omega_1} y)\succ_{\omega_2} z-(x\succ_{\omega_{\tau_1(1)}} y)\succ_{\omega_{\tau_1(2)}} z-x\succ_{\omega_{\tau_2(1)}} (y\succ_{\omega_{\tau_2(2)}} z)\,|\,\omega_1\neq \omega_2\in \Omega\right\}.\nonumber
\end{eqnarray}
Notice that the condition in Eq.~\meqref{eq:matset} can be obtained by the arbitrariness of $x,y,z$. Therefore there exist
$32=2^2\times 2 \times2^2$ types of the matching dendriform algebras with parameter $\Omega$.
Moreover, the unique \lev matching dendriform algebra (with respect to the preorder given in Eq.~\meqref{eq:order}) is determined by the relations
$r_1^{(e, e)},r_2^{e}, r_3^{(e, e)}$, where $e$ is the unit of  $S_2$. It first appeared in~\cite{ZGG23}.
\end{example}

\begin{remark}
We pay give special attention to the naturally arisen matching dendriform algebra introduced in ~\cite{ZGG20} from Volterra integral operators~\cite{GGL}. Let $A$ be the $\mathbb{R}$-algebra of continuous functions on $\mathbb{R}$. For $k_\omega(x)\in A, \omega \in \Omega$, consider the Volterra integral operators with kernel $k_\omega$:
$$I_\omega(f)(x):=\int_0^x k_\omega(t)f(t)dt, \quad f\in A.$$
Define
\vsb
$$ f \prec_\omega g:=f(x)\int_0^x k_\omega(t) g(t)dt, \quad f\succ_\omega g:=\Big(\int_0^x k_\omega(t) f(t)dt\Big) g(x).$$
Then $(A,\prec_\omega, \succ_\omega |\omega \in \Omega)$ is a matching dendriform algebra in the sense of \cite{ZGG20}. Another importance of this construction is that, by antisymmetrization, this matching dendriform algebra gives rise to the matching (multiple) pre-Lie algebra~\cite{Foi,ZGG20} from the landmark work~\mcite{BHZ} on regularity structures. Unfortunately, this construction does not correspond to the matching compatibility defined in~\cite{ZGG23}. Being able to cover this natural construction of compatibility is a primary motivation for our foliation approach to a more general notion of matching compatibilities. Now this construction corresponds to the matching compatibility $r_1^{(e, (12))},r_2^{e}, r_3^{((12), (12))}$ in Eq.~\meqref{eq:dendm}.
\mlabel{rk:mdend}
\end{remark}

We give more examples for later applications and to show that the other previous instances of matching compatibilities are special cases of our general notion.
\begin{example}\mlabel{ex:aspolar}\label{ex:match}
Let $\Omega$ be a set with $|\Omega|=2$.
\begin{enumerate}
\item
The \lev matching compatible operad (with respect to the preorder given in Eq.~\meqref{eq:order})
$$\lmat{\com}_\Omega=\stt\Big(\bigoplus_{\omega\in \Omega} M_\omega \Big)\Big/\Big\langle \lmr\Big\rangle
\vsb$$
of the commutative operad is given by
\begin{gather*}
M_\omega=\bfk \left\{\treey{\cdlr{o}\node  at (ol) [above] {1}; \node  at (or) [above] {2};\node at  (o)[left=0.2] [scale=0.8]{$\omega$};}=\treey{\cdlr{o}\node  at (ol) [above] {2};\node  at (or) [above] {1};\node at  (o)[left=0.2] [scale=0.8]{$\omega$};}\right\},\quad\omega\in\Omega\\
\lmr=~\Biggr\{\treey{\cdlr{ol}\node at (oll)[above=0.2] [scale=0.8]{$1$};\node at (olr)[above=0.2] [scale=0.8]{$2$};\node at (or)[above=0.2] [scale=0.8]{$3$};
\node at  (ol)[left=0.2] [scale=0.8]{$\mu$};\node at  (o)[left=0.2] [scale=0.8]{$\mu$};}
-\treey{\cdlr{ol}\node at (oll)[above=0.2] [scale=0.8]{$3$};\node at (olr)[above=0.2] [scale=0.8]{$1$};\node at (or)[above=0.2] [scale=0.8]{$2$};
\node at  (ol)[left=0.2] [scale=0.8]{$\mu$};\node at  (o)[left=0.2] [scale=0.8]{$\mu$};},\,
\treey{\cdlr{ol}\node at (oll)[above=0.2] [scale=0.8]{$1$};\node at (olr)[above=0.2] [scale=0.8]{$2$};\node at (or)[above=0.2] [scale=0.8]{$3$};
\node at  (ol)[left=0.2] [scale=0.8]{$\mu$};\node at  (o)[left=0.2] [scale=0.8]{$\mu$};}
-\treey{\cdlr{ol}\node at (oll)[above=0.2] [scale=0.8]{$2$};\node at (olr)[above=0.2] [scale=0.8]{$3$};\node at (or)[above=0.2] [scale=0.8]{$1$};
\node at  (ol)[left=0.2] [scale=0.8]{$\mu$};\node at  (o)[left=0.2] [scale=0.8]{$\mu$};},\,
\treey{\cdlr{ol}\node at (oll)[above=0.2] [scale=0.8]{$1$};\node at (olr)[above=0.2] [scale=0.8]{$2$};\node at (or)[above=0.2] [scale=0.8]{$3$};
\node at  (ol)[left=0.2] [scale=0.8]{$\mu$};\node at  (o)[left=0.2] [scale=0.8]{$\nu$};}
-\treey{\cdlr{ol}\node at (oll)[above=0.2] [scale=0.8]{$3$};\node at (olr)[above=0.2] [scale=0.8]{$1$};\node at (or)[above=0.2] [scale=0.8]{$2$};
\node at  (ol)[left=0.2] [scale=0.8]{$\mu$};\node at  (o)[left=0.2] [scale=0.8]{$\nu$};},\,
\treey{\cdlr{ol}\node at (oll)[above=0.2] [scale=0.8]{$1$};\node at (olr)[above=0.2] [scale=0.8]{$2$};\node at (or)[above=0.2] [scale=0.8]{$3$};
\node at  (ol)[left=0.2] [scale=0.8]{$\mu$};\node at  (o)[left=0.2] [scale=0.8]{$\nu$};}
-\treey{\cdlr{ol}\node at (oll)[above=0.2] [scale=0.8]{$2$};\node at (olr)[above=0.2] [scale=0.8]{$3$};\node at (or)[above=0.2] [scale=0.8]{$1$};
\node at  (ol)[left=0.2] [scale=0.8]{$\mu$};\node at  (o)[left=0.2] [scale=0.8]{$\nu$};}
\,\Biggr|\,
\mu,\nu\in\Omega
\Biggr\}.
\vsa
\end{gather*}

\item Consider the associative operad $\as$ of associative algebras defined by the associative relation
\vsb
$r=\treey{\cdlr{ol}\node at (oll)[above=0.2] [scale=0.8]{$1$};\node at (olr)[above=0.2] [scale=0.8]{$2$};\node at (or)[above=0.2] [scale=0.8]{$3$};}-
\treey{\cdlr{or}\node at (ol)[above=0.2] [scale=0.8]{$1$};\node at (orl)[above=0.2] [scale=0.8]{$2$};\node at (orr)[above=0.2] [scale=0.8]{$3$};}.$
We get two \mprs of $r$, for $\sigma\in S_2$:
\begin{align*}
r_{{\rm MT}^\sigma}=~\Biggr\{\treey{\cdlr{ol}\node at (oll)[above=0.2] [scale=0.8]{$1$};\node at (olr)[above=0.2] [scale=0.8]{$2$};\node at (or)[above=0.2] [scale=0.8]{$3$};\foreach \i/\j in { o/$\omega_1$,ol/$\omega_2$} {\node at  (\i) [left] [scale=0.8]{\j};}}
-
\treey{\cdlr{or}\node at (ol)[above=0.2] [scale=0.8]{$1$};\node at (orl)[above=0.2] [scale=0.8]{$2$};\node at (orr)[above=0.2] [scale=0.8]{$3$};\foreach \i/\j in { o/$\omega_{\sigma(1)}$,or/$\omega_{\sigma(2)}$} {\node at  (\i) [right] [scale=0.8]{\j};}}\,\Biggr|\,
\omega_1,\omega_2\in\Omega\Biggr\}.
\end{align*}
The matching associative algebra with relations $r_{{\rm MT}^{(12)}}$ was studied in~\mcite{ZBG, ZGG23}. The matching relation $r_{{\rm MT}^e}$ is \lev. %Moreover, the symmetric operad $\ass$ of associative algebras has two \mprs, for $\sigma\in S_2$:
%\begin{align*}
%r_{{\rm MT}^\sigma}=~\Biggr\{\treey{\cdlr{ol}\node at (oll)[above=0.2] [scale=0.8]{$\rho(1)$};\node at (olr)[above=0.2] [scale=0.8]{$\rho(2)$};\node at (or)[above=0.2] [scale=0.8]{$\rho(3)$};\foreach \i/\j in { o/$\omega_1$,ol/$\omega_2$} {\node at  (\i) [left] [scale=0.8]{\j};}}
%-
%\treey{\cdlr{or}\node at (ol)[above=0.2] [scale=0.8]{$\rho(1)$};\node at (orl)[above=0.2] [scale=0.8]{$\rho(2)$};\node at (orr)[above=0.2] [scale=0.8]{$\rho(3)$};\foreach \i/\j in { o/$\omega_{\sigma(1)}$,or/$\omega_{\sigma(2)}$} {\node at  (\i) [right] [scale=0.8]{\j};}}\,\Biggr|\,
%\omega_1,\omega_2\in\Omega, \rho\in S_3\Biggr\}.
%\end{align*}

\item Let $\mscr{P}re\mscr{L}=\mscr{T}(M)/\langle R_{\rm PL}\rangle $ be the operad of (left) pre-Lie algebras with relation
$$R_{\rm PL}=\Biggr\{\treey{\cdlr{ol}\foreach \i/\j in { oll/$\rho(1)$,olr/$\rho(2)$,or/$\rho(3)$} {\node at  (\i) [above] [scale=0.8]{\j};}}
-\treey{\cdlr{or}\foreach \i/\j in { ol/$\rho(1)$,orl/$\rho(2)$,orr/$\rho(3)$} {\node at  (\i) [above] [scale=0.8]{\j};}}
-\treey{\cdlr{ol}\foreach \i/\j in { oll/$\rho(2)$,olr/$\rho(1)$,or/$\rho(3)$} {\node at  (\i) [above] [scale=0.8]{\j};}}
+\treey{\cdlr{or}\foreach \i/\j in { ol/$\rho(2)$,orl/$\rho(1)$,orr/$\rho(3)$} {\node at  (\i) [above] [scale=0.8]{\j};}}\,\Biggr|\, \rho\in S_3 \Biggr\}.$$
Then there are eight \mprs of $R_{\rm PL}$ parameterized by ${S_2}^3$. More precisely, for $\sigmab=(\sigma_1,\sigma_2,\sigma_3)\in {S_2}^3$, take
$$R_{{\rm PL}, {\rm MT}^{\sigmab}}:=\Biggr\{\treey{\cdlr{ol}\foreach \i/\j in { oll/$\rho(1)$,olr/$\rho(2)$,or/$\rho(3)$} {\node at  (\i) [above] [scale=0.8]{\j};}\foreach \i/\j in { ol/$\omega_{1}$,o/$\omega_{2}$} {\node at  (\i) [left] [scale=0.8]{\j};}}
-\treey{\cdlr{or}\foreach \i/\j in { ol/$\rho(1)$,orl/$\rho(2)$,orr/$\rho(3)$} {\node at  (\i) [above] [scale=0.8]{\j};}\foreach \i/\j in { o/$\omega_{\sigma_1(1)}$,or/$\omega_{\sigma_1(2)}$} {\node at  (\i) [right] [scale=0.8]{\j};}}
-\treey{\cdlr{ol}\foreach \i/\j in { oll/$\rho(2)$,olr/$\rho(1)$,or/$\rho(3)$} {\node at  (\i) [above] [scale=0.8]{\j};}\foreach \i/\j in { ol/$\omega_{\sigma_2(1)}$,o/$\omega_{\sigma_2(2)}$} {\node at  (\i) [left] [scale=0.8]{\j};}}
+\treey{\cdlr{or}\foreach \i/\j in { ol/$\rho(2)$,orl/$\rho(1)$,orr/$\rho(3)$} {\node at  (\i) [above] [scale=0.8]{\j};}\foreach \i/\j in { o/$\omega_{\sigma_3(1)}$,or/$\omega_{\sigma_3(2)}$} {\node at  (\i) [right] [scale=0.8]{\j};}}\,\Biggr|\, \omega_1,\omega_2\in\Omega, \rho\in S_3 \Biggr\}.$$
The matching pre-Lie algebra with  relation $R_{{\rm PL, MT}^\sigma}$ for $\sigmab=(\sigma_1,\sigma_2,\sigma_3)=\big(e,(12),(12)\big)\in {S_2}^3$, first appeared in~\cite{BHZ} and further studied in~\cite{Foi,ZGG20}.

\item The operad $\mscr{N}{\hspace{-0.2cm}}ov$ of Novikov algebras is given by a binary operation and the Novikov relations
$R_{\rm Nov}:=R_{\rm PL} \cup \Biggr\{\treey{\cdlr{ol}\foreach \i/\j in { oll/$\rho(1)$,olr/$\rho(2)$,or/$\rho(3)$} {\node at  (\i) [above] [scale=0.8]{\j};}}
-\treey{\cdlr{ol}\foreach \i/\j in { oll/$\rho(1)$,olr/$\rho(3)$,or/$\rho(2)$} {\node at  (\i) [above] [scale=0.8]{\j};}}\,\Biggr|\, \omega_1,\omega_2\in\Omega, \rho\in S_3 \Biggr\}.$
Thus the \mprs of $R_{\rm Nov}$ are parameterized by    $\sigmab=\big((\sigma_1,\sigma_2,\sigma_3),\sigma_4\big)\in {S_2}^3\times S_2$:
$$R_{\rm Nov, {\rm MT}^{\sigmab}}:= \Biggr\{
\treey{\cdlr{ol}\foreach \i/\j in { oll/$\rho(1)$,olr/$\rho(2)$,or/$\rho(3)$} {\node at  (\i) [above] [scale=0.8]{\j};}\foreach \i/\j in { ol/$\omega_{1}$,o/$\omega_{2}$} {\node at  (\i) [left] [scale=0.8]{\j};}}
-\treey{\cdlr{or}\foreach \i/\j in { ol/$\rho(1)$,orl/$\rho(2)$,orr/$\rho(3)$} {\node at  (\i) [above] [scale=0.8]{\j};}\foreach \i/\j in { o/$\omega_{\sigma_1(1)}$,or/$\omega_{\sigma_1(2)}$} {\node at  (\i) [right] [scale=0.8]{\j};}}
-\treey{\cdlr{ol}\foreach \i/\j in { oll/$\rho(2)$,olr/$\rho(1)$,or/$\rho(3)$} {\node at  (\i) [above] [scale=0.8]{\j};}\foreach \i/\j in { ol/$\omega_{\sigma_2(1)}$,o/$\omega_{\sigma_2(2)}$} {\node at  (\i) [left] [scale=0.8]{\j};}}
+\treey{\cdlr{or}\foreach \i/\j in { ol/$\rho(2)$,orl/$\rho(1)$,orr/$\rho(3)$} {\node at  (\i) [above] [scale=0.8]{\j};}\foreach \i/\j in { o/$\omega_{\sigma_3(1)}$,or/$\omega_{\sigma_3(2)}$} {\node at  (\i) [right] [scale=0.8]{\j};}},\,
%%%
\treey{\cdlr{ol}\foreach \i/\j in { oll/$\rho(1)$,olr/$\rho(2)$,or/$\rho(3)$} {\node at  (\i) [above] [scale=0.8]{\j};}\foreach \i/\j in { ol/$\omega_{1}$,o/$\omega_{2}$} {\node at  (\i) [left] [scale=0.8]{\j};}}
-\treey{\cdlr{ol}\foreach \i/\j in { oll/$\rho(1)$,olr/$\rho(3)$,or/$\rho(2)$} {\node at  (\i) [above] [scale=0.8]{\j};}\foreach \i/\j in { ol/$\omega_{\sigma_4(1)}$,o/$\omega_{\sigma_4(2)}$} {\node at  (\i) [left] [scale=0.8]{\j};}}\,\Biggr|\, \omega_1,\omega_2\in\Omega, \rho\in S_3 \Biggr\}.$$
Furthermore, taking the union $R_{{\rm Nov, MT}^\sigma}\cup R_{{\rm Nov, MT}^\tau}$ for
$$\sigmab=\big((\sigma_1,\sigma_2,\sigma_3),\sigma_4\big)=\left(\big(e,(12),(12)\big), (12)\right)\in {S_2}^3\times S_2,$$
and
$${\boldsymbol{ \tau}}=\big((\tau_1,\tau_2,\tau_3),\tau_4\big)=\left(\big(e,e,e\big), (12)\right)\in {S_2}^3\times S_2,$$
we obtain the multi-Novikov relation newly introduced in the context of singular stochastic partial
differential equations\,\mcite{BruD}.
\label{it:matchnov}

\item It can be verified that the matching Rota-Baxter algebra introduced in~\cite{GGL,ZGG20} from Volterra integral equations is also one of our matching structures from a foliation.
\end{enumerate}
\end{example}

The following result shows that a matching operad is a linearly compatible operad.
\begin{proposition}\mlabel{prop:matlin0}
Let $\Omega$ be a nonempty set and $\spp$ be a locally homogeneous operad.
Then for each \mpr $\mrr$ with $\pmc\in \pc{R}$, there is an operad epimorphism
$\lin{\spp}_\Omega \longrightarrow \mat{\spp}_\Omega.$
\end{proposition}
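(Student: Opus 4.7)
The plan is to exhibit every quasipolarization as a $\bfk$-linear combination of matching relations, from which the epimorphism follows immediately. Both operads are quotients of the common free operad $\mscr{T}\bigl(\bigoplus_{\omega\in\Omega} M_\omega\bigr)$, so the canonical projection onto $\mat{\spp}_\Omega$ will factor through $\lin{\spp}_\Omega$ as soon as one verifies the ideal containment $\langle\lrr\rangle\subseteq\langle R\ma{\sigmab}\rangle$. Moreover, surjectivity will be automatic since both operads are generated by the same image of $\bigoplus_{\omega\in\Omega} M_\omega$.

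Thus the task reduces to showing that for every homogeneous $r(X)\in R$ of weight $m$ and every $\fc\in\mwc{m}$, the quasipolarization $r_\fc(X_\Omega)$ lies in $\langle R\ma{\sigmab}\rangle$. Write $r(X)=\sum_{i=0}^{s}\alpha_i(t_i,d_i)$ as in Eq.~\meqref{eq:rel2} and enumerate the colorings as in Eq.~\meqref{eq:colord}. The key observation is then the following telescoping identity: the sum of the $\sbin$ components of the splitting $\prc{r(X)}{\vec{\sigma}_\fc}$ (where $\vec{\sigma}_\fc$ is the $\fc$-coordinate of $\sigmab$) equals $r_\fc(X_\Omega)$. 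Indeed, summing over $1\leq j\leq\sbin$ the matching relations $\alpha_0(t_0,D_{0,j})+\sum_{i=1}^{s}\alpha_i(t_i,D_{i,\sigma_i(j)})$ collapses each permuted column $\{D_{i,\sigma_i(j)}\}_j$ back to $\{D_{i,j}\}_j$ because each $\sigma_i\in S_{\sbin}$ is a bijection, and by Proposition~\mref{prop:polar}\mref{item:constpa} the resulting expression $\sum_{i=0}^{s}\alpha_i\sum_{j=1}^{\sbin}(t_i,D_{i,j})$ is exactly $r_\fc(X_\Omega)$.

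Running this argument over all $r\in R$ and all $\fc\in\mwc{|r|}$, every generator of $\lrr$ is realized as a $\bfk$-combination of elements of $R\ma{\sigmab}$, proving $\langle\lrr\rangle\subseteq\langle R\ma{\sigmab}\rangle$ and hence giving the desired operad epimorphism $\lin{\spp}_\Omega\twoheadrightarrow\mat{\spp}_\Omega$. There is no serious obstacle beyond the combinatorial bookkeeping; the only thing to be careful with is that the indexing $\sigmab\in\pc{R}\subseteq\prod_{r\in R}\prod_{c\in\mwc{|r|}}\pcos{c}{r}$ simultaneously chooses one foliation for each pair $(r,c)$, which is precisely what is needed so that the above telescoping can be performed on each quasipolarization independently.
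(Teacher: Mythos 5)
Your proof is correct and follows essentially the same route as the paper: the paper's one-line argument ("by the definition of foliation, $\bfk\{\lrr\}\subseteq\bfk\{\mrr\}$") is exactly your telescoping observation that summing the $\sbin$ components of the foliation $\prc{r(X)}{\vec{\sigma}_\fc}$ recovers $r_\fc(X_\Omega)$, since each $\sigma_i$ permutes its column bijectively. You have simply written out the inclusion of ideals and the resulting factorization of quotient maps in full detail.
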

\begin{proof}
By the definition of foliation, we have $\bfk\{\lrr\} \subseteq \bfk\{ \mrr\}$ and so the result follows.
\end{proof}

\begin{theorem}
Let $\Omega$ be a nonempty set. Let $\spp=\mscr{T}(M)/\langle R\rangle$ be a locally homogeneous operad. Then
\vsb
$$\lmat{\Big(\lin{\spp}_\Omega\Big)}_\Omega=\lin{\Big(\lmat{\spp}_\Omega\Big)}_\Omega \,\text{ and }\,\lmat{\Big(\lmat{\spp}_\Omega\Big)}_\Omega =\lmat{\spp}_{\Omega^2}.$$
\end{theorem}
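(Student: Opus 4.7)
My plan is to mirror the structure of the proof of Proposition~\ref{thm:ll}, first identifying the generators on both sides of each equation and then reducing to a relation-by-relation comparison for each homogeneous $r(X)\in R$. For the generators, I will use the canonical identification $(M_\Omega)_\Omega \cong M_{\Omega^2}$ already employed in Proposition~\ref{thm:ll}: all four operads $\lmat{(\lin{\spp}_\Omega)}_\Omega$, $\lin{(\lmat{\spp}_\Omega)}_\Omega$, $\lmat{(\lmat{\spp}_\Omega)}_\Omega$, and $\lmat{\spp}_{\Omega^2}$ have generating $\mathbb{S}$-module $\bigoplus_{(\mu,\nu)\in\Omega^2} M_{(\mu,\nu)}$. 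So both identities reduce to showing that the defining ideals coincide.

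For the second identity $\lmat{(\lmat{\spp}_\Omega)}_\Omega = \lmat{\spp}_{\Omega^2}$, I would fix $r(X)=\sum_{i=0}^s \alpha_i(t_i,d_i)\in R^{(m)}$ and trace how leveled matching relations transform under iteration. By Definition~\ref{de:mat} and the preorder convention in Eq.~\eqref{eq:order}, a leveled matching relation of $r$ with parameter $\Omega$ is determined by $\Omega$-colorings $\deltal_i:V(t_i)\to X_\Omega$ agreeing on every internal vertex except (possibly) the root. Applying the leveled matching construction a second time with parameter $\Omega$ amounts to further coloring each already-$\Omega$-colored vertex by an element of $\Omega$, producing a decoration by $X_\Omega\times\Omega\cong X_{\Omega^2}$ that is again leveled away from the root. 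This is precisely a leveled matching relation of $r$ with parameter $\Omega^2$, and the natural identification $\pc{\lmr}\times \pc{R}\leftrightarrow \pc{R}$ (taken at parameter $\Omega^2$) shows that the collection of such relations coincides.

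For the first identity $\lmat{(\lin{\spp}_\Omega)}_\Omega = \lin{(\lmat{\spp}_\Omega)}_\Omega$, the same fixed $r(X)\in R^{(m)}$ is transformed by the composite of polarization and leveled foliation, in opposite orders. On the left, one first forms a polarization $r_{\fc}(X_\Omega)$ for $\fc\in\mwc{m}$ and then takes its leveled foliation with parameter $\Omega$; on the right, one first takes a leveled foliation of $r(X)$ with parameter $\Omega$ and then polarizes the result. By Proposition~\ref{prop:polar}\ref{item:constpa}, both resulting relations are indexed by (a) an $\Omega^2$-weak composition of $m$ specifying the type of decoration by $\Omega\times\Omega$, and (b) a leveled partition of the second $\Omega$-coordinate on the non-root vertices. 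Because polarization is a formal linear expansion on decorations while leveled foliation is a combinatorial partition of the decoration space, the two operations commute on spans; I would formalize this by writing out both sides via the explicit formula in Eq.~\eqref{eq:qpc} and observing that the summations over \owcs and over leveled colorings are interchangeable.

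The main obstacle is bookkeeping the leveled condition through the composite operations and verifying that the compatibility condition Eq.~\eqref{eq:matset} on the parameter set $\pc{-}$ is preserved and yields a bijection at each step. In particular, one must show that a valid $\pmc$ for the inner matching step pushes forward to a valid $\pmc$ for the outer step, and that the preorder traversal from Eq.~\eqref{eq:order} is compatible with the identification $V(t)$ at both layers. This verification is essentially combinatorial, but requires careful attention to how the symmetric-group indexing for the matching parameter interacts with the coloring sets, analogous to and building on the argument in Proposition~\ref{thm:ll}.
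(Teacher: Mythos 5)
Your proposal follows essentially the same route as the paper: identify the generators of both sides with $\bigoplus_{(\mu,\nu)\in\Omega^2} M_{(\mu,\nu)}$, reduce to a relation-by-relation comparison for each homogeneous $r(X)\in R$, and verify via the explicit formulas (Eq.~\eqref{eq:qpc} together with the leveled-foliation indexing) that polarization and leveled foliation commute, with the second identity handled as in Proposition~\ref{thm:ll}. The "main obstacle" you flag—pushing the leveled condition through the interchange of the two constructions—is exactly where the paper's chain of set equalities does its work (and is why the statement fails for non-leveled matching compatibilities), so your plan is on target.
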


\begin{proof}
We just check the first one, as the second one is similar to the proof of Proposition~\mref{thm:ll}.

Since the generators of both the operads $\lmat{\Big(\lin{\spp}_\Omega\Big)}_\Omega$ and $\lin{\Big(\lmat{\spp}_\Omega\Big)}_\Omega$ are
$\bigoplus_{(\mu,\nu)\in\Omega^2} M_{(\mu,\nu)},$
we only need to prove that for any relation $r(X)=\sum_{1\leq i \leq s} \alpha_i\,(t_i,\dec_i)\in R^{(m)}$, the set of \lev matching relations
\vsb
$$\left\{r_{\fcwp{c}{\mu}}\big(X_\Omega \big)\,\left|\, \fcwp{c}{\mu}\in\mwc{m} \right.\right\}_{\rm LMT}$$
of the linearly compatible relations of $r(X)$ with parameter $\Omega$
coincides with the set of linearly compatible relations
\vsb
$$\left\{f_{\fcwp{b}{\nu}}\big(X_{\Omega^2}\big)\,\left|\,f\in r(X)_{\rm LMT}, \fcwp{b}{\nu}\in\mwc{m} \right.\right\}$$
of the \lev matching relations of $r(X)$ with parameter $\Omega$.
For this purpose, we check
\begin{equation}\notag
\resizebox{\textwidth}{!}
{$
\begin{split}
&\left\{r_{\fcwp{c}{\mu}}\big(X_\Omega \big)\,\left|\, \fcwp{c}{\mu}\in\mwc{m} \right.\right\}_{\rm LMT}\\
=&\bigg\{\sum_{1\leq i\leq s}\alpha_{i}\, (t_i, d_i){\fcwp{c}{\mu}}\,\Big|\, \fcwp{c}{\mu}\in\mwc{m} \bigg\}_{\rm LMT}\\ \smallskip
=&\bigg\{\sum_{1\leq i\leq s}\alpha_{i} \sum_{1\leq k\leq |\lif{\dec_i,{\fcwp{c}{\mu}}}|} ({t_i},\deltal_{i,k})\,\Big|\, \fcwp{c}{\mu}\in\mwc{m}, \deltal_{i,k}\in \lif{\dec_i,{\fcwp{c}{\mu}}} \bigg\}_{\rm LMT}\\
=&\left\{\left.\bigsqcup_{1\leq p \leq\tbinom{m}{(b_\nu)}}\left\{\sum_{1\leq i\leq s}\alpha_{i} \sum_{1\leq k\leq \tbinom{m}{(c_\mu)}} ({t_i},\deltal_{i,k,p})\right\}
\,\right|\,
\begin{split}
\fcwp{c}{\mu},\fcwp{b}{\nu} \in\mwc{m},\\
\deltal_{i,k}\in \lif{\dec_i,{\fcwp{c}{\mu}}}
\end{split}
\right\},
\begin{split}
&\bigsqcup_{1 \leq p\leq\tbinom{m}{(b_\nu)}}\deltal_{i,k,p}= \lif{\deltal_{i,k}, \fcwp{b}{\nu}}, \\
&\pi_\Omega \big(\deltal_{\ell,q,p}(v_{i,j})\big)=\pi_\Omega \big(\deltal_{\ell',q',p}(v_{i,j})\big),~~v_{i,1},\ldots, v_{i,m}\in V(t_i)
\end{split}
\\
& \\
=&\left\{\left.
\bigsqcup_{1\leq p \leq \tbinom{m}{(b_\nu)}}\left\{\sum_{ 1\leq i \leq s }\alpha_i
\sum_{1\leq k \leq \tbinom{m}{(c_\mu)}}(t_i, \deltal_{i,k,p})\right\}\,\right|\,
\begin{split}
&\fcwp{c}{\mu},\fcwp{b}{\nu} \in\mwc{m}\\
&\deltal_{i,k,p}\in\lif{\deltal_{i,p}, (c_\nu)}
\end{split}
\right\},
\begin{split}
&\bigsqcup_{1 \leq p\leq\tbinom{m}{(b_\nu)}}\deltal_{i,p}= \lif{\dec_i, (b_\nu)}\\
&\pi_\Omega \big(\deltal_{\ell,p}(v_{i,j})\big)=\pi_\Omega \big(\deltal_{\ell',p}(v_{i,j})\big),~~v_{i,1},\ldots, v_{i,m}\in V(t_i)
\end{split}\\
& \\
=&\left\{f_{\fcwp{c}{\mu}}\big(X_{\Omega^2}\big)\,\left|\,f(X_\Omega)\in
\left\{\bigsqcup_{1\leq p \leq \tbinom{m}{(b_\nu)}}\sum_{ 1\leq i \leq s }\alpha_i \,\big(t_i, \deltal_{i,p}\big)
\right\}, \fcwp{c}{\mu}\in\mwc{m} \right.\right\},
\begin{split}
&\bigsqcup_{1 \leq p\leq\tbinom{m}{(b_\nu)}}\deltal_{i,p}= \lif{\dec_i, (b_\nu)},\\
&\pi_\Omega \big(\deltal_{q,p}(v_{i,j})\big)=\pi_\Omega \big(\deltal_{q',p}(v_{i,j})\big),~~v_{i,1},\ldots, v_{i,m}\in V(t_i)                                                                                                                                        \end{split}
\\
& \\
=&\left\{f_{\fcwp{c}{\mu}}\big(X_{\Omega^2}\big)\,\left|\,f(X_\Omega)\in r(X)_{\rm LMT}, \fcwp{c}{\mu}\in\mwc{m} \right.\right\},
\end{split}
$
}
\end{equation}
as required.
\end{proof}

\begin{remark}
The above theorem might not hold without the \lev matching compatibility, because the fifth equation in the above proof is not valid for general \mprs.
\end{remark}

\subsection{Total compatibility}\mlabel{ss:totc}
In this subsection, we introduce the third compatibility of an operad, the total compatibility. It is obtained by adding more relations to the matching compatibility, in analogue to the \ufo of polynomials in \S\,\ref{ss:folia}.

Let $R$ be a set of relations of an operad. Write
$$r(X)=\sum_{  (t,\dec)\in T_n^m(X)}\alpha_{t,\dec} \,(t,\dec)\in R^{(m)}.
\vsb
$$
We define
\vsb
$$r_{{\rm TC}, \fc}(X):=\Big\{(t,\delta)-(t,\tau)\,|\, t\in  {\rm Supp}(r(X))\,\text{ and }\, \delta,\tau\in \lif{\dec, \fc}\Big\}\,\text{ for }\,\fc\in \mwc{m},$$
\vsa
$$\trr:=\bigcup_{r(X)\in R, \fc\in \mwc{|r(X)|}}\left\{r_{{\rm TC},\fc}(X) \right\}.$$
For example, consider $\Omega=\{1,2,3\}$,  $c_1=c_2=c_3=1$ and
$r(X)={\treey{\cdlr[0.8]{o}\cdl{ol}\cdr{or}\node at (ol) [scale=0.6]{$\bullet$};\node at (or) [scale=0.6]{$\bullet$};\node at (0.1,-0.1) {$x$};
\node at (-0.4,0.2) {$y$};\node at (0.4,0.2) {$z$};}}$.
We have
$$\left\{(t,\deltal)\,|\,\deltal\in\lif{\dec, \fc}\right\}=\bigg\{\treey{\cdlr[0.8]{o}\cdl{ol}\cdr{or}\node at (ol) [scale=0.6]{$\bullet$};\node at (or) [scale=0.6]{$\bullet$};\node at (0.2,-0.1) {$x_3$};
\node at (-0.5,0.2) {$y_1$};\node at (0.5,0.2) {$z_2$};}, \treey{\cdlr[0.8]{o}\cdl{ol}\cdr{or}\node at (ol) [scale=0.6]{$\bullet$};\node at (or) [scale=0.6]{$\bullet$};\node at (0.2,-0.1) {$x_2$};
\node at (-0.5,0.2) {$y_1$};\node at (0.5,0.2) {$z_3$};}, \treey{\cdlr[0.8]{o}\cdl{ol}\cdr{or}\node at (ol) [scale=0.6]{$\bullet$};\node at (or) [scale=0.6]{$\bullet$};\node at (0.2,-0.1) {$x_1$};
\node at (-0.5,0.2) {$y_3$};\node at (0.5,0.2) {$z_2$};}, \treey{\cdlr[0.8]{o}\cdl{ol}\cdr{or}\node at (ol) [scale=0.6]{$\bullet$};\node at (or) [scale=0.6]{$\bullet$};\node at (0.2,-0.1) {$x_2$};
\node at (-0.5,0.2) {$y_3$};\node at (0.5,0.2) {$z_1$};}, \treey{\cdlr[0.8]{o}\cdl{ol}\cdr{or}\node at (ol) [scale=0.6]{$\bullet$};\node at (or) [scale=0.6]{$\bullet$};\node at (0.2,-0.1) {$x_3$};
\node at (-0.5,0.2) {$y_2$};\node at (0.5,0.2) {$z_1$};}, \treey{\cdlr[0.8]{o}\cdl{ol}\cdr{or}\node at (ol) [scale=0.6]{$\bullet$};\node at (or) [scale=0.6]{$\bullet$};\node at (0.2,-0.1) {$x_1$};
\node at (-0.5,0.2) {$y_2$};\node at (0.5,0.2) {$z_3$};} \bigg\}.$$
So some elements of $r_{{\rm TC}, \fc}(X)$ are
$$\treey{\cdlr[0.8]{o}\cdl{ol}\cdr{or}\node at (ol) [scale=0.6]{$\bullet$};\node at (or) [scale=0.6]{$\bullet$};\node at (0.2,-0.1) {$x_3$};
\node at (-0.5,0.2) {$y_1$};\node at (0.5,0.2) {$z_2$};}-\treey{\cdlr[0.8]{o}\cdl{ol}\cdr{or}\node at (ol) [scale=0.6]{$\bullet$};\node at (or) [scale=0.6]{$\bullet$};\node at (0.2,-0.1) {$x_1$};
\node at (-0.5,0.2) {$y_3$};\node at (0.5,0.2) {$z_2$};},\,
\treey{\cdlr[0.8]{o}\cdl{ol}\cdr{or}\node at (ol) [scale=0.6]{$\bullet$};\node at (or) [scale=0.6]{$\bullet$};\node at (0.2,-0.1) {$x_3$};
\node at (-0.5,0.2) {$y_1$};\node at (0.5,0.2) {$z_2$};}-\treey{\cdlr[0.8]{o}\cdl{ol}\cdr{or}\node at (ol) [scale=0.6]{$\bullet$};\node at (or) [scale=0.6]{$\bullet$};\node at (0.2,-0.1) {$x_2$};
\node at (-0.5,0.2) {$y_1$};\node at (0.5,0.2) {$z_3$};},\,
\treey{\cdlr[0.8]{o}\cdl{ol}\cdr{or}\node at (ol) [scale=0.6]{$\bullet$};\node at (or) [scale=0.6]{$\bullet$};\node at (0.2,-0.1) {$x_3$};
\node at (-0.5,0.2) {$y_1$};\node at (0.5,0.2) {$z_2$};}-\treey{\cdlr[0.8]{o}\cdl{ol}\cdr{or}\node at (ol) [scale=0.6]{$\bullet$};\node at (or) [scale=0.6]{$\bullet$};\node at (0.2,-0.1) {$x_3$};
\node at (-0.5,0.2) {$y_2$};\node at (0.5,0.2) {$z_1$};}.$$

\begin{proposition}\mlabel{prop:meqt}
Let $\Omega$ be a nonempty set. For two matching operads of a locally homogeneous operad $\mscr{P}$,
\vsb
$$\mat{\mscr{P}}_\Omega:=\mscr{T}\Big(\bigoplus\limits_{\omega\in\Omega} M_\omega\Big)\Big/\Big\langle R_{{\rm MT}^\pmc}\Big\rangle\,\text{ and }\, {{\mscr{P}}_\Omega^{{\rm MT}^\pmd}}:=\mscr{T}\Big(\bigoplus\limits_{\omega\in\Omega} M_\omega\Big)\Big/\Big\langle R_{{\rm MT}^\pmd}\Big\rangle,$$
with $\pmc,\pmd\in\pc{R}$.
We have
\vsb
$$\tot{\mscr{P}}_\Omega=\mscr{T}\Big(\bigoplus\limits_{\omega\in\Omega} M_\omega\Big)\Big/\Big\langle R_{{\rm MT}^\pmc}\cup\trr\Big\rangle=\mscr{T}\Big(\bigoplus\limits_{\omega\in\Omega} M_\omega\Big)\Big/\Big\langle R_{{\rm MT}^\pmd}\cup\trr\Big\rangle.$$
\end{proposition}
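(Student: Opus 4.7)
The plan is to establish the equality of the two presentations by showing that the ideals $\langle R_{{\rm MT}^\pmc}\cup\trr\rangle$ and $\langle R_{{\rm MT}^\pmd}\cup\trr\rangle$ coincide. Since both ideals visibly contain $\trr$, it suffices to prove that each matching relation in $R_{{\rm MT}^\pmc}$ lies in $\langle R_{{\rm MT}^\pmd}\cup\trr\rangle$, and symmetrically. In fact the two inclusions are symmetric, so I only need to check one direction; the symmetric side follows by swapping $\pmc$ and $\pmd$.

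The core of the argument is a direct comparison of foliations of the same polarization. Fix a homogeneous relation $r(X)=\sum_{0\leq i\leq s}\alpha_i(t_i,\dec_i)\in R$ of weight $m$, a type $\fc\in\mwc{m}$, and an index $1\leq j\leq\sbin_\fc$. Writing $\pmc=(\vec{\sigma}_{\fc,r})$ and $\pmd=(\vec{\tau}_{\fc,r})$ with $\vec{\sigma}_{\fc,r}=(\sigma_{\fc,1},\ldots,\sigma_{\fc,s})$ and $\vec{\tau}_{\fc,r}=(\tau_{\fc,1},\ldots,\tau_{\fc,s})$, the corresponding matching relations read
\[
f_\pmc \;=\; \alpha_0(t_0,D_{0,j})+\sum_{1\leq i\leq s}\alpha_i(t_i,D_{i,\sigma_{\fc,i}(j)}),
\qquad
f_\pmd \;=\; \alpha_0(t_0,D_{0,j})+\sum_{1\leq i\leq s}\alpha_i(t_i,D_{i,\tau_{\fc,i}(j)}).
\]
Their difference is
\[
f_\pmc-f_\pmd \;=\; \sum_{1\leq i\leq s}\alpha_i\bigl[(t_i,D_{i,\sigma_{\fc,i}(j)})-(t_i,D_{i,\tau_{\fc,i}(j)})\bigr].
\]
By construction $D_{i,\sigma_{\fc,i}(j)}$ and $D_{i,\tau_{\fc,i}(j)}$ are both elements of $\lif{\dec_i,\fc}$, so each bracketed term has the form $(t,\delta)-(t,\tau)$ with $\delta,\tau\in\lif{\dec_i,\fc}$ and $t\in{\rm Supp}(r)$, which is precisely a generator of $\trr$. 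Hence $f_\pmc\equiv f_\pmd\pmod{\langle\trr\rangle}$, and ranging over all $r$, $\fc$ and $j$ we get $R_{{\rm MT}^\pmc}\subseteq R_{{\rm MT}^\pmd}+\langle\trr\rangle$, as required.

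The main obstacle I anticipate is bookkeeping rather than conceptual: one must make sure the column vectors $\vec{D}_i$ are indexed by the \emph{same} enumeration of $\lif{\dec_i,\fc}$ in both presentations, so that the $j$-th row of the standard foliation used to build $R_{{\rm MT}^\pmc}$ and the $j$-th row used to build $R_{{\rm MT}^\pmd}$ truly match up on the $t_0$-coordinate. This is a convention fixed by Eq.~\meqref{eq:colord}, and once we adopt it the cancellation above is automatic. I should also verify that the equivariance built into $\pc{R}$ (the condition $(r\ma{\sigmab})\cdot\rho=(r\cdot\rho)\ma{\sigmab}$) ensures that the $S_n$-closure of the two sets of matching relations can be compared row by row without any additional trouble, but this is immediate because applying $\rho\in S_n$ to both $f_\pmc$ and $f_\pmd$ yields relations of the same shape whose difference is again a sum of $\trr$-generators. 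A short remark will close the proof by symmetry.
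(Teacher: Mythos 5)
Your argument is correct and is essentially the paper's own proof: both reduce the claim to the observation that the difference of the $\pmc$- and $\pmd$-foliations of the same polarization $r_\fc$ is a linear combination of elements $(t_i,\deltal_{i,j})-(t_i,\deltal_{i,j'})$, which are by definition generators of $\trr$. The only cosmetic difference is that the paper phrases the conclusion as an equality of spans, $\bfk\{\prc{r(X)}{\vec{\pmc}}, r_{{\rm TC},\fc}(X)\}=\bfk\{\prc{r(X)}{\vec{\pmd}}, r_{{\rm TC},\fc}(X)\}$, while you phrase it as a congruence modulo $\langle\trr\rangle$.
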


\begin{proof}
Let $r(X)=\sum_{ 0\leq i \leq s }\alpha_i \big(t_i, \dec_i\big)\in R^{(m)}$ and  $r_{\fc}\big(X_\Omega\big)$ be a \polar of $r(X)$ of type ${\fc}\in\mwc{m}$. By Definition~\mref{defn:partm}, we have
\begin{eqnarray*}
\prc{r(X)}{\vec{\pmc}}
&=&\bigg\{
\alpha_0 \left(t_0, \deltal_{0,j}\right)+\sum_{ 1\leq i \leq s }\alpha_i \left(t_i, \deltal_{i,\pmc_{i}(j)}\right)
\,\Big|\, 1\leq j\leq \sbin\bigg\}\\
\prc{r(X)}{\vec{\pmd}}
&=&\bigg\{
\alpha_0 \left(t_0, \deltal_{0,j}\right)+\sum_{ 1\leq i \leq s }\alpha_i \left(t_i, \deltal_{i,\pmd_{i}(j)}\right)
\,\Big|\, 1\leq j\leq \sbin\bigg\}
\end{eqnarray*}
for $\sigma_{i},\tau_{i}\in S_\sbin$ and $\lif{\dec_i, c}=\{\deltal_{i,1},\deltal_{i,2},\ldots,\deltal_{i,\sbin}\}$ with $0\leq i \leq s$.
By the definition of totally compatible relations $r_{{\rm TC}, \fc}(X),$
the relations $ \big(t_i, \deltal_{i,j}\big)- \big(t_i,  \deltal_{i,j'}\big)$ are in $r_{{\rm TC}, \fc}(X)$
for $1 \leq j,j'\leq\sbin$ and $0\leq i\leq s$.
Thus,
$$\bfk\big\{\prc{r(X)}{\vec{\pmc}}, r_{{\rm TC}, \fc}(X)\big\}=\bfk\big\{\prc{r(X)}{\vec{\pmd}}, r_{{\rm TC}, \fc}(X)\big\},\quad \fc\in\mwc{m}$$
and so $\bfk\Big\{ R_{{\rm MT}^\pmc}\cup\trr\Big\}=\bfk\Big \{ R_{{\rm MT}^\pmd}\cup\trr\Big\}.$
\end{proof}

Now we are ready to introduce the total compatibility.

\begin{definition}
Let $\Omega$ be a nonempty set. Let $\mscr{P} = \stt(M)/\langle R\rangle$ be a locally homogeneous operad and $R_{{\rm MT}^\pmc}$ a matching compatibility with any $\pmc\in\pc{R}$.
We call
\begin{equation*}
 \tot{\mscr{P}}_\Omega :=\mscr{T}\Big(\bigoplus\limits_{\omega\in\Omega} M_\omega\Big)\Big/\Big\langle R_{{\rm MT}^\pmc}\cup\trr\Big\rangle
\end{equation*}
the \textbf{totally compatible operad } of $\mscr{P}$ with parameter $\Omega$.
\mlabel{de:totcomp1}
\end{definition}

By Proposition~\mref{prop:meqt}, the totally compatible operad is independent  of the choice of $R_{{\rm MT}^\pmc}$.
\vsa
\begin{example}
Let $\Omega$ be a set with $|\Omega|=2$.
\begin{enumerate}
\item For the operad $\as$ give in Example~\mref{ex:aspolar}, we have
$\trr=\bigg\{\treey{\cdlr{ol}\node at (oll)[above=0.2] [scale=0.8]{$1$};\node at (olr)[above=0.2] [scale=0.8]{$2$};\node at (or)[above=0.2] [scale=0.8]{$3$};\foreach \i/\j in { o/$\omega_1$,ol/$\omega_2$} {\node at  (\i) [left] [scale=0.8]{\j};}}
-
\treey{\cdlr{ol}\node at (oll)[above=0.2] [scale=0.8]{$1$};\node at (olr)[above=0.2] [scale=0.8]{$2$};\node at (or)[above=0.2] [scale=0.8]{$3$};\foreach \i/\j in { o/$\omega_2$,ol/$\omega_1$} {\node at  (\i) [left] [scale=0.8]{\j};}}
,\treey{\cdlr{or}\node at (ol)[above=0.2] [scale=0.8]{$1$};\node at (orl)[above=0.2] [scale=0.8]{$2$};\node at (orr)[above=0.2] [scale=0.8]{$3$};\foreach \i/\j in { o/$\omega_1$,or/$\omega_2$} {\node at  (\i) [right] [scale=0.8]{\j};}}
-
\treey{\cdlr{or}\node at (ol)[above=0.2] [scale=0.8]{$1$};\node at (orl)[above=0.2] [scale=0.8]{$2$};\node at (orr)[above=0.2] [scale=0.8]{$3$};\foreach \i/\j in { o/$\omega_2$,or/$\omega_1$} {\node at  (\i) [right] [scale=0.8]{\j};}}
\bigg\}$
and the two \mprs of $R$ are
\begin{align*}
R_{{\rm MT}^\sigma}=~\Biggr\{\treey{\cdlr{ol}\node at (oll)[above=0.2] [scale=0.8]{$1$};\node at (olr)[above=0.2] [scale=0.8]{$2$};\node at (or)[above=0.2] [scale=0.8]{$3$};\foreach \i/\j in { o/$\omega_1$,ol/$\omega_2$} {\node at  (\i) [left] [scale=0.8]{\j};}}
-
\treey{\cdlr{or}\node at (ol)[above=0.2] [scale=0.8]{$1$};\node at (orl)[above=0.2] [scale=0.8]{$2$};\node at (orr)[above=0.2] [scale=0.8]{$3$};\foreach \i/\j in { o/$\omega_{\sigma(1)}$,or/$\omega_{\sigma(2)}$} {\node at  (\i) [right] [scale=0.8]{\j};}}\,\Big|\,
\omega_1,\omega_2\in\Omega\Biggr\}, \quad  \sigma\in S_2.
\end{align*}
Thus $\bfk\Big\{ R_{{\rm MT}^{e}}\cup\trr\Big\}=\bfk\Big \{ R_{{\rm MT}^{(12)}}\cup\trr\Big\}$
and so
$$\tot{\as}_\Omega=\mscr{T}\Big(\bigoplus\limits_{\omega\in\Omega} M_\omega\Big)\Big/\Big\langle R_{{\rm MT}^e}\cup\trr\Big\rangle=\mscr{T}\Big(\bigoplus\limits_{\omega\in\Omega} M_\omega\Big)\Big/\Big\langle R_{{\rm MT}^{(12)}}\cup\trr\Big\rangle.$$

\item For the dendriform operad in Example~\mref{eg:dend},
its totally compatible relations  are
$$\trr=\left\{\begin{split}
           (x\circ_{\omega_1} y)\bullet_{\omega_2} z-&(x\circ_{\omega_2} y)\bullet_{\omega_1} z,\\
x\circ_{\omega_1} (y\bullet_{\omega_2} z)-&x\circ_{\omega_2} (y\bullet_{\omega_1} z)
         \end{split}
\,\Biggr|\, \omega_1,\omega_2\in \Omega,
(\circ,\bullet)\in\{(\prec, \prec), (\prec, \succ), (\succ,\succ),(\succ,\prec)\}
\right\}.
$$
\end{enumerate}
\end{example}

We evidently have
\vsb
\begin{proposition}\mlabel{prop:totmat0}
Let $\Omega$ be a nonempty set. Let $\spp$ be a locally homogeneous operad.
Then there is an epimorphism of operads
$\mat{\spp}_\Omega \longrightarrow \tot{\spp}_\Omega.$
\vsc
\end{proposition}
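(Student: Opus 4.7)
The plan is essentially to unwind the definitions and observe that the totally compatible operad is, by construction, a quotient of the matching operad. Both operads are presented as quotients of the same free operad $\mscr{T}\bigl(\bigoplus_{\omega\in\Omega} M_\omega\bigr)$, so it suffices to exhibit a containment of the defining operadic ideals.

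More concretely, I would pick any $\pmc\in\pc{R}$ (whose existence is guaranteed because $\mat{\spp}_\Omega$ is defined using such a $\pmc$) and recall from Definition~\mref{de:totcomp1} that
\[
\tot{\mscr{P}}_\Omega =\mscr{T}\Big(\bigoplus_{\omega\in\Omega} M_\omega\Big)\Big/\Big\langle R_{{\rm MT}^\pmc}\cup\trr\Big\rangle,
\]
while $\mat{\spp}_\Omega = \mscr{T}\bigl(\bigoplus_{\omega\in\Omega} M_\omega\bigr)/\langle R_{{\rm MT}^\pmc}\rangle$. The evident inclusion $R_{{\rm MT}^\pmc}\subseteq R_{{\rm MT}^\pmc}\cup\trr$ gives $\langle R_{{\rm MT}^\pmc}\rangle\subseteq \langle R_{{\rm MT}^\pmc}\cup\trr\rangle$ as operadic ideals of the free operad.

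I would then invoke the universal property of the operadic quotient: the composite of the canonical surjection $\mscr{T}\bigl(\bigoplus_\omega M_\omega\bigr)\twoheadrightarrow \tot{\spp}_\Omega$ with the natural embedding of the free operad sends every element of $\langle R_{{\rm MT}^\pmc}\rangle$ to zero, hence factors uniquely through $\mat{\spp}_\Omega$. The resulting operad morphism $\mat{\spp}_\Omega\to\tot{\spp}_\Omega$ is surjective, since generators map to generators, and it is clearly compatible with the $\mathbb{S}$-action, composition, and unit inherited from the free operad.

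There is essentially no obstacle here; the proposition is a formal consequence of Proposition~\mref{prop:meqt} (which makes the construction of $\tot{\spp}_\Omega$ independent of the chosen $\pmc$) together with the universal property of quotient operads. The only minor point worth flagging in the writeup is the well-definedness: one should note that the image of $R_{{\rm MT}^\pmc}$ in $\tot{\spp}_\Omega$ is zero \emph{by construction}, so no compatibility with symmetric group actions needs to be checked beyond what is already encoded in the definition of $\pc{R}$ (see Eq.~\meqref{eq:matset}).
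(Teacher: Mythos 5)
Your proposal is correct and is exactly the argument the paper has in mind: the paper states this proposition with ``We evidently have'' and no written proof, precisely because $R_{{\rm MT}^{\pmc}}\subseteq R_{{\rm MT}^{\pmc}}\cup\trr$ makes $\tot{\spp}_\Omega$ a quotient of $\mat{\spp}_\Omega$ by construction (together with Proposition~\mref{prop:meqt} for independence of $\pmc$). Your writeup just makes this explicit via the universal property of quotient operads, which is fine.
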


\section{Koszul duality and Manin products of compatible operads}
\mlabel{sec:km}
We first establish the Koszul duality between the linear compatibility and total compatibility.
We then present a self-duality among the matching compatibilities.
We finally show that compatibilities for binary quadratic operads can be achieved by taking the Manin products.
 \vsa
\subsection{Koszul duality between the linear compatibility and total compatibility}
\mlabel{ss:ltdual}

Denote by ${\rm sgn}:=\big\{0, \text{sgn}_1 ,\text{sgn}_2,0,\cdots\big\}$,
where ${\rm sgn}_1$ and ${\rm sgn}_2$ are the sign representations of $S_1$ and $S_2$, respectively.
{Recall that a unary binary quadratic operad is a quadratic operad with generators concentrated in arities 1 and 2.}

\begin{lemma}\mlabel{thm:ubdual}
~\cite{Val08}$($Koszul dual operad of a unary binary quadratic operad$)$
Let $\spp = \spp(M,R)$ be a unary binary quadratic operad, generated by a reduced $\mathbb{S}$-module $M$ that is
finite-dimensional in each arity. Then the Koszul dual operad $\spp^!$ admits a quadratic presentation of the form
\vsa
\begin{equation*}
\spp^!=\spp(M^\vee, R^\perp),
\vsa
\end{equation*}
where $M^\vee = M^* \underset{ {\rm H} }{\otimes} {\rm sgn}$
and $R^\perp$ is the orthogonal subspace of $R$ in $\stt(M^\vee)^{(2)}$.
\end{lemma}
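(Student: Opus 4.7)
The plan is to follow the classical Ginzburg--Kapranov/Getzler--Jones/Loday--Vallette framework for Koszul duality of quadratic operads, adapted to accommodate generators in arity $1$ alongside arity $2$. I would define $\spp^!$ through the cobar construction of the Koszul dual cooperad $\spp^{\text{!`}}$, and then identify the resulting presentation with $\spp(M^\vee,R^\perp)$ by constructing an explicit nondegenerate pairing on the weight-$2$ piece of the free operad.

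First I would describe $\stt(M)^{(2)}$ as an $\mathbb{S}$-module in the unary-binary setting. Writing $M=M(1)\oplus M(2)$, the weight-$2$ component decomposes according to the shape of the two-vertex trees and the arity of the vertices. Concretely, $\stt(M)^{(2)}$ splits as
\[
\stt(M)^{(2)}(1)=M(1)\otimes M(1),\quad
\stt(M)^{(2)}(2)=\bigl(M(1)\otimes M(2)\bigr)\oplus \bigoplus_{i=1,2}\bigl(M(2)\otimes_i M(1)\bigr),
\]
\[
\stt(M)^{(2)}(3)=\mathrm{Ind}_{S_2\times S_1}^{S_3}\bigl(M(2)\otimes M(2)\bigr)\oplus \mathrm{Ind}_{S_1\times S_2}^{S_3}\bigl(M(2)\otimes M(2)\bigr),
\]
with appropriate $\mathbb{S}$-actions. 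Since $M$ is finite-dimensional in each arity, each summand is finite-dimensional, so $R\subseteq \stt(M)^{(2)}$ has a well-defined orthogonal complement.

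Next I would build the pairing $\langle\,,\,\rangle:\stt(M^\vee)^{(2)}\otimes \stt(M)^{(2)}\to \bfk$. Starting from the evaluation pairing $M^*\otimes M\to \bfk$, the Hadamard twist by $\mathrm{sgn}$ twists the $\mathbb{S}$-action; extending tree-by-tree via composition yields a pairing on $\stt(M^\vee)^{(2)}\otimes \stt(M)^{(2)}$, with signs chosen so that it is $\mathbb{S}$-equivariant on each arity component. The key calculation is to check that this pairing is nondegenerate summand-by-summand in the decomposition above; then define $R^\perp:=\{x\in \stt(M^\vee)^{(2)}\mid \langle x,R\rangle=0\}$. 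Identifying $\spp(M^\vee,R^\perp)$ with the operad produced by the cobar-of-cooperad construction applied to the quadratic data $(M,R)$ is then a direct, formal verification: the cogenerators of $\spp^{\text{!`}}$ in weight $2$ are dual to $R$, so after the operadic suspension (encoded by the sign twist) the relations of the cobar construction become exactly $R^\perp$.

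The main obstacle, compared to the purely binary case, is the bookkeeping of signs and operadic suspensions when unary generators compose with themselves or with binary generators. In the binary case all weight-$2$ trees have the same shape complexity and contribute a uniform suspension sign; here one must separately account for (i)~unary-on-unary compositions in arity $1$, (ii)~unary-on-binary and binary-on-unary compositions in arity $2$, and (iii)~the usual binary-on-binary compositions in arity $3$. The correct twist $M^\vee=M^*\otimes_{\mathrm{H}}\mathrm{sgn}$ with $\mathrm{sgn}$ concentrated in arities $1$ and $2$ is dictated by requiring the pairing to be compatible with the bar differential on both the unary and binary strata simultaneously, and this compatibility, together with nondegeneracy of the pairing, is precisely what needs to be checked to conclude.
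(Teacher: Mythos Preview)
The paper does not prove this lemma at all: it is quoted verbatim from \cite{Val08} and used as a black box in the proof of Theorem~\ref{thm:dul}. So there is no ``paper's own proof'' to compare against.

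That said, your outline is the standard argument one finds in the cited reference and in \cite{LV}, and it is essentially correct. A few remarks on places where your sketch is slightly imprecise. First, in the unary-binary setting the operadic suspension $\mathrm{sgn}$ must be supported in arities $1$ \emph{and} $2$ (as the paper's notation ${\rm sgn}=\{0,\mathrm{sgn}_1,\mathrm{sgn}_2,0,\ldots\}$ indicates), and you should verify that the Hadamard twist on $M(1)$ by $\mathrm{sgn}_1$ is compatible with the suspension convention you adopt for the bar construction; this is where careless sign bookkeeping typically goes wrong. Second, your description of $\stt(M)^{(2)}(3)$ as a sum of two induced representations overcounts: the two summands you wrote are isomorphic as $S_3$-modules (a single binary-on-binary tree shape, with the grafting slot accounted for by the induction), so you should either write a single induced module or use the three left-comb tree types as the paper does in its explicit description of $R(3)$. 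Neither issue is fatal, but both would need to be cleaned up in a complete write-up.
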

\vsb
Strohmayer~\cite[Proposition 1.14]{St08} proved that $\Big({\lin{\mscr{P}}_\Omega}\Big)^!=~\tot{\Big(\mscr{P}^!\Big)}_\Omega$ and $\Big(~\tot{\mscr{P}}_\Omega\Big)^!=\lin{\Big(\mscr{P}^!\Big)}_\Omega$ for $|\Omega|=2$ and
a binary quadratic symmetric operad $\mscr{P}$. The case of arbitrary nonempty finite set $\Omega$ and unary binary quadratic  nonsymmetric operad $\mscr{P}$ was obtained in~\mcite{ZGG23}.
We generalize it to  arbitrary unary binary quadratic symmetric operads, for any nonempty finite set $\Omega$.

\begin{theorem}\mlabel{thm:dul}
Let $\Omega$ be a nonempty finite set.
Let $\mscr{P}=\mscr{T}(M)/\langle R\rangle $  be a unary binary quadratic symmetric operad. If $M(1)$ and $M(2)$ are finitely dimensional, then
$\big({\lin{\mscr{P}}_\Omega}\big)^!=~\tot{\big(\mscr{P}^!\big)}_\Omega$ and $\big(~\tot{\mscr{P}}_\Omega\big)^!=\lin{\big(\mscr{P}^!\big)}_\Omega$.
\end{theorem}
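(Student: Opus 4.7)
The plan is to reduce to one of the two identities by Koszul biduality: once we establish $\big({\lin{\mscr{P}}_\Omega}\big)^! = \tot{\big(\mscr{P}^!\big)}_\Omega$, applying it to $\mscr{P}^!$ and using $(\mscr{P}^!)^! = \mscr{P}$ yields the companion identity. Since $\lin{\mscr{P}}_\Omega$ is again unary binary quadratic (its generators live in arities $1,2$ and $\lrr \subset \stt(\bigoplus_\omega M_\omega)^{(2)}$), Lemma~\ref{thm:ubdual} gives
\[
\big(\lin{\mscr{P}}_\Omega\big)^! \;=\; \stt\Big(\big(\oplus_\omega M_\omega\big)^\vee\Big)\Big/\big\langle \lrr^\perp \big\rangle.
\]
The first easy observation is the canonical identification $\big(\oplus_\omega M_\omega\big)^\vee = \oplus_\omega M^\vee_\omega$, since both dualization and the Hadamard tensor with $\mathrm{sgn}$ distribute over $\oplus$. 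This matches the generating $\mathbb{S}$-module of $\tot{\big(\mscr{P}^!\big)}_\Omega$.

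The core step is then to prove $\lrr^\perp = \bfk\big\{R^\perp_{\mathrm{mt}^{\sigmab}} \cup R^\perp_{\mathrm{TC}}\big\}$ for \emph{some} (equivalently, by Proposition~\ref{prop:meqt}, \emph{any}) $\sigmab \in \pc{R^\perp}$. I would work one type $c \in \mwc{2}$ at a time, using the explicit formula of Proposition~\ref{prop:polar}\ref{item:constpa}: for $r = \sum_i \alpha_i (t_i, d_i) \in R$,
\[
r_c(X_\Omega) \;=\; \sum_i \alpha_i \sum_{\delta \in \lif{d_i, c}} (t_i, \delta).
\]
Split into two cases. When $c$ is concentrated, say $c_{\omega_0} = 2$ with all other entries zero, there is a unique coloring, and $r_c$ is just the monochromatic copy of $r$ in color $\omega_0$; the Koszul pairing restricted to this monochromatic sector is the original pairing on $\stt(M)^{(2)}$ transported by $\omega_0$, so this sector of $\lrr^\perp$ is precisely $R^\perp$ in color $\omega_0$, which is the $c$-component of both the matching and totality relations. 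When $c$ is split, say $c_{\omega_0} = c_{\omega_1} = 1$, then $\lif{d_i, c}$ has two elements $\delta_i, \delta_i'$ swapped by interchanging the vertex colorings, and $r_c = \sum_i \alpha_i\big((t_i, \delta_i) + (t_i, \delta_i')\big)$. Within this bichromatic sector, I would verify that the orthogonal of the span of such symmetrized sums (over all $r \in R$) is generated by the two kinds of elements: (i) matching relations, which are copies of $R^\perp$ with a fixed $(c, \sigmab)$-colored decoration, and (ii) totality relations $(t_i, \delta_i) - (t_i, \delta_i')$ for each tree monomial. The linear-algebra identity to check per tree shape is that the ``symmetric'' part of the bichromatic space decomposes as $\bfk\{r_c\}_{r \in R} \oplus \bfk\{$matching relations for $R^\perp\}_{c}$, whereas the antisymmetric part is spanned by totality relations, so that the orthogonal complement of the symmetric polarizations is precisely matching $+$ totality.

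The main obstacle is the combinatorial/linear-algebra identification in the split case, which must be carried out uniformly across the weight-$2$ tree shapes arising from unary and binary generators: trees with two unary vertices (arity $1$), a unary on top of a binary or a binary with a unary at one leaf (arity $2$), and graftings of two binaries (arity $3$). In each case the sign twist from $M^\vee = M^* \underset{\mathrm{H}}{\otimes} \mathrm{sgn}$ enters via a $(-1)$ on the symmetric bichromatic sum, which is exactly what converts ``symmetric orthogonal to symmetric'' into the needed decomposition. Finally, $S_n$-equivariance of the identification $\lrr^\perp \leftrightarrow R^\perp_{\mathrm{mt}^{\sigmab}} \cup R^\perp_{\mathrm{TC}}$ has to be checked to confirm $\sigmab$ indeed lies in $\pc{R^\perp}$; this is where the matching compatibility condition~\eqref{eq:matset} is used, and it is automatic from the fact that the polarization operation commutes with the $S_n$-action as recorded in Remark~\ref{remark:p}\ref{item:symc}. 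With the $c$-by-$c$ identification in hand, the presentation of $\big(\lin{\mscr{P}}_\Omega\big)^!$ matches that of $\tot{(\mscr{P}^!)}_\Omega$ term by term, completing the proof.
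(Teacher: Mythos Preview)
Your overall strategy matches the paper's: apply Lemma~\ref{thm:ubdual} to present $\big(\lin{\mscr{P}}_\Omega\big)^!$, identify generating modules via $(\bigoplus_\omega M_\omega)^\vee \cong \bigoplus_\omega M_\omega^\vee$, reduce to the relation-space identity $\bfk(\lrr)^\perp = \bfk\big((R^\perp)_{\rm MT} \cup (R^\perp)_{\rm TC}\big)$, and derive the companion statement from biduality. The paper splits this identity by arity (writing $R = R(1)\cup R(2)\cup R(3)$ and checking each block by direct computation with explicit bases), whereas you split by type $c\in\mwc{2}$ and argue conceptually via a symmetric/antisymmetric decomposition. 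Your argument in the split case is correct and can be phrased cleanly as follows: letting $S$ denote the symmetrization map $(t,d)\mapsto (t,\delta)+(t,\delta')$, the bichromatic LC relations span $S(R)$, and $(S(R))^\perp = (S^*)^{-1}(R^\perp)$; since $\ker S^*$ is spanned exactly by the TC differences $(t,\delta)^*-(t,\delta')^*$ and any section of $R^\perp$ along $S^*$ yields a matching-type copy of $R^\perp$, one gets $(S(R))^\perp = (R^\perp)_{\rm MT} + (R^\perp)_{\rm TC}$. This is slightly more streamlined than the paper's block-by-block verification.

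One point to correct: your remark that ``the sign twist from $M^\vee = M^*\underset{\rm H}{\otimes}\mathrm{sgn}$ enters via a $(-1)$ on the symmetric bichromatic sum'' is misplaced. No sign is needed for the orthogonality computation above; the twist by $\mathrm{sgn}$ only affects how $M^\vee$ sits as an $\mathbb{S}$-module, not the linear-algebra structure of the pairing on the weight-$2$ space. The paper's explicit computation confirms this: the identification of $(\lrr)^\perp$ with $(R^\perp)_{\rm MT}\cup (R^\perp)_{\rm TC}$ proceeds without any sign appearing. Dropping that sentence, your argument goes through.
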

\vsb
\begin{proof}
We first prove $\big({\lin{\mscr{P}}_\Omega}\big)^!=~\tot{\big(\mscr{P}^!\big)}_\Omega$.
Let $M=\big\{\,0, M(1), M(2), 0,\ldots, 0 ,\ldots\,\big\},$
where $ M(1)$ is spanned by
\vsb
\begin{equation*}
	\biggr\{
\treeyy[scale=0.8]{\cdu{o} \node  at (0,0) [scale=0.6]{$\bullet$};\node at (0.3,0) [scale=0.6]{$P_1$};},
\treeyy[scale=0.8]{\cdu{o} \node  at (0,0) [scale=0.6]{$\bullet$};\node at (0.3,0)[scale=0.6] {$P_2$};},~\cdots~,
\treeyy[scale=0.8]{\cdu{o} \node  at (0,0) [scale=0.6]{$\bullet$};\node at (0.3,0) [scale=0.6]{$P_t$};}
\biggr\}
\end{equation*}
for some $t\in \mathbb{Z}_{\ge 0}$, and $ M(2)$ is spanned by
\vsa
\begin{equation*}
	\left\{
\treey{\cdlr{o}\node  at (0,0.25)[scale=0.6] {$\mu_1$};},
\treey{\cdlr{o}\node  at (0,0.25) [scale=0.6]{$\mu_2$};},~\ldots ~,
\treey{\cdlr{o}\node  at (0,0.25) [scale=0.6]{$\mu_s$};}
\right\}
\end{equation*}
for some $s\in \mathbb{Z}_{\ge 0}$. Write $R:=R(1)\cup R(2)\cup R(3)$, where
\begin{equation}
R(1)= ~\biggr\{r_1^n(P_k,P_\ell):=\sum_{1\leq k,\ell\leq t}\alpha_{k,\ell}^{n}~~
\treeyy{\cdu{o} \node  at (0,-0.2) [scale=0.6]{$\bullet$};\node at (0.3,-0.2) [scale=0.6]{$P_k$};
\node  at (0,0.2) [scale=0.6]{$\bullet$};\node at (0.3,0.2)[scale=0.6] {$P_\ell$};}
 \quad\Bigg|\,{1\leq n\leq m_1}\biggr\},
\mlabel{eq:pr1}
\end{equation}
\vsb
\begin{equation*}
R(2)= ~\biggr\{r_{2}^n(P_k,\mu_i):=\sum_{1\leq i\leq s, \atop 1\leq k\leq t}\kb{k}{i}{1} \treey{\cdlr[0.8]{o} \cdl{ol}\cdr{or}\zhds{o/b}
	\node at (0.25,-0.2) [scale=0.6]{$P_k$};\node at (0,0.2) [scale=0.6]{$\mu_i$};}
+\kb{k}{i}{2}\treey{\cdlr[0.8]{o} \cdl{ol}\cdr{or}
	\node at (ol) [scale=0.6]{$\bullet$};\node at (-0.4,0.1)[scale=0.6] {$P_k$};\node at (0,0.2) [scale=0.6]{$\mu_i$};}
+\kb{k}{i}{3}\treey{\cdlr[0.8]{o} \cdl{ol}\cdr{or}
	\node at (or) [scale=0.6]{$\bullet$};\node at (0.4,0.1) [scale=0.6]{$P_k$};\node at (-0,0.2) [scale=0.6]{$\mu_i$};}
\quad\Bigg|\,{1\leq n\leq m_2}\biggr\},
\end{equation*}
\vsc
\begin{equation*}
R(3)= ~\biggr\{r_{3}^n(\mu_i,\mu_j):=\sum_{1\leq i,j\leq s}
\ka{i}{j}{1}\treey{\cdlr{ol}\node at (oll)[above=0.2] [scale=0.6]{$1$};\node at (olr)[above=0.2] [scale=0.6]{$2$};\node at (or)[above=0.2] [scale=0.6]{$3$};
\node at  (ol)[left=0.2] [scale=0.6]{$\mu_j$};\node at  (o)[left=0.2] [scale=0.6]{$\mu_i$};}
+\ka{i}{j}{2}\treey{\cdlr{ol}\node at (oll)[above=0.2] [scale=0.6]{$2$};\node at (olr)[above=0.2] [scale=0.6]{$3$};\node at (or)[above=0.2] [scale=0.6]{$1$};
\node at  (ol)[left=0.2] [scale=0.6]{$\mu_j$};\node at  (o)[left=0.2] [scale=0.6]{$\mu_i$};}
+\ka{i}{j}{3}\treey{\cdlr{ol}\node at (oll)[above=0.2] [scale=0.6]{$3$};\node at (olr)[above=0.2] [scale=0.6]{$1$};\node at (or)[above=0.2] [scale=0.6]{$2$};
\node at  (ol)[left=0.2] [scale=0.6]{$\mu_j$};\node at  (o)[left=0.2] [scale=0.6]{$\mu_i$};}
\quad \Bigg| \,{1\leq n\leq m_{3}}\biggr\}.
\end{equation*}
By Definitions~\mref{de:comp1}, ~\mref{de:totcomp1} and Lemma~\mref{thm:ubdual}, we have
$$\Big(\lin{\mscr{P}}_\Omega\Big)^!=\mscr{T}\Big((\bigoplus\limits_{\omega\in\Omega} M_\omega)^\vee\Big)\Big/\Big\langle (\lrr)^\perp\Big\rangle,\quad
 \tot{\mscr{\Big(P^!\Big)}}_\Omega =\mscr{T}\Big(\bigoplus\limits_{\omega\in\Omega} M^\vee_\omega\Big)\Big/\Big\langle (R^\perp)_{\rm MT}\cup(R^\perp)_{\rm TC}\Big\rangle.
$$
By $\Big(\bigoplus\limits_{\omega\in\Omega} M_\omega\Big)^\vee\cong \bigoplus\limits_{\omega\in\Omega} M^\vee_\omega$, we can identify $\mscr{T}\Big((\bigoplus\limits_{\omega\in\Omega} M_\omega)^\vee\Big)$ with $\mscr{T}\Big(\bigoplus\limits_{\omega\in\Omega} M^\vee_\omega\Big)$. So we only need to show
\vsb
$$\Big\langle (\lrr)^\perp\Big\rangle=\Big\langle (R^\perp)_{\rm MT}\cup(R^\perp)_{\rm TC}\Big\rangle,$$
which follows from the equality
\vsb
\begin{equation}
\bfk(\lrr)^\perp=\bfk\Big( (R^\perp)_{\rm MT}\cup(R^\perp)_{\rm TC}\Big).
\mlabel{eq:lrd}
\end{equation}
To check Eq.~\meqref{eq:lrd}, on the one hand,
\vsb
\begin{eqnarray*}
\bfk(\lrr)^\perp
&=&\bfk\left\{r_1^n(P_k^\omega,P_\ell^\tau)+r_1^n(P_k^\tau,P_\ell^\omega), r_2^n(P_k^\omega,\mu_i^\tau)+r_2^n(P_k^\tau,\mu_i^\omega)
,r_3^n(\mu_i^\omega,\nu_j^\tau)+r_3^n(\mu_i^\tau,\nu_j^\omega)\right.\\
&&\qquad\,\left.\left|\, \omega,\tau\in\Omega, r^n_p\in R(p), p=1,2,3 \right.\right\}^\perp\\
&=&
\bfk\left\{\left\{r_1^n(P_k^\omega,P_\ell^\tau)+r_1^n(P_k^\tau,P_\ell^\omega)\,\left|\, \omega,\tau\in\Omega, r^n_1\in R(1)\right.\right\}^\perp\right.\\
&&\qquad\cup \left\{r_2^n(P_k^\omega,\mu_i^\tau)+r_2^n(P_k^\tau,\mu_i^\omega)\,\left|\, \omega,\tau\in\Omega, r^n_2\in R(2)\right.\right\}^\perp\\
&&\qquad\cup \left.\left\{r_3^n(\mu_i^\omega,\nu_j^\tau)+r_3^n(\mu_i^\tau,\nu_j^\omega)\,\left|\, \omega,\tau\in\Omega, r^n_3\in R(3)\right.\right\}^\perp
\right\}.
\vsb
\end{eqnarray*}
On the other hand,
\vsa
\begin{eqnarray*}
&&\bfk\Big( (R^\perp)_{\rm MT}\cup(R^\perp)_{\rm TC}\Big)\\
&=& \bfk\bigg( \Big(R(1)^\perp\Big)_{\rm MT}\cup\Big(R(1)^\perp\Big)_{\rm TC}\cup \Big(R(2)^\perp\Big)_{\rm MT}\cup\Big(R(2)^\perp\Big)_{\rm TC}\cup \Big(R(3)^\perp\Big)_{\rm MT}\cup\Big(R(3)^\perp\Big)_{\rm TC}\bigg).
\end{eqnarray*}
So Eq.~\meqref{eq:lrd} is equivalent to
\vsa
\begin{equation}
\begin{split}
\bfk\Big\{r_1^n(P_k^\omega,P_\ell^\tau)+r_1^n(P_k^\tau,P_\ell^\omega)\,\big|\, \omega,\tau\in\Omega, r^n_1\in R(1)\Big\}^\perp
&=\bfk\Big(\big(R(1)^\perp\big)_{\rm MT}\cup\big(R(1)^\perp\big)_{\rm TC}\Big),\\
\bfk \Big\{r_2^n(P_k^\omega,\mu_i^\tau)+r_2^n(P_k^\tau,\mu_i^\omega)\,\big|\, \omega,\tau\in\Omega, r^n_2\in R(2)\Big\}^\perp
&=\bfk\Big(\big(R(2)^\perp\big)_{\rm MT}\cup\big(R(2)^\perp\big)_{\rm TC}\Big),\\
\bfk\Big\{r_3^n(\mu_i^\omega,\nu_j^\tau)+r_3^n(\mu_i^\tau,\nu_j^\omega)\,\big|\, \omega,\tau\in\Omega, r^n_3\in R(3)\Big\}^\perp
&=\bfk\Big(\big(R(3)^\perp\big)_{\rm MT}\cup\big(R(3)^\perp\big)_{\rm TC}\Big).\\
\end{split}
\mlabel{eq:lr123}
\end{equation}
We just check the first one in Eq.~\meqref{eq:lr123}, as the other two are similarly verified.
Writing explicitly,
\vsb
\begin{equation}
R(1)^\perp= ~\biggr\{\sum_{1\leq k,\ell\leq t}\lambda_{k,\ell}^{n}~~
\treeyy{\cdu{o} \node  at (0,-0.2) [scale=0.6]{$\bullet$};\node at (0.3,-0.2) [scale=0.6]{$P_k^\ast$};
\node  at (0,0.2) [scale=0.6]{$\bullet$};\node at (0.3,0.2)[scale=0.6] {$P_\ell^\ast$};}
 \quad\,\Bigg|\,{1\leq n\leq p_1}\biggr\}.
\mlabel{eq:ppr1}
\end{equation}
Then the first one in Eq.~\meqref{eq:lr123} follows from
\vsa
\begin{eqnarray*}
&&\bfk\bigg\{r_1^n(P_k^\omega,P_\ell^\tau)+r_1^n(P_k^\tau,P_\ell^\omega)\,\big|\, \omega,\tau\in\Omega, r^n_1\in R(1)\bigg\}^\perp\\
&=&\bfk\bigg\{\alpha_{k,\ell}^{n}~~
\treeyy{\cdu{o} \node  at (0,-0.2) [scale=0.6]{$\bullet$};\node at (0.3,-0.2) [scale=0.6]{$P_k^\omega$};
\node  at (0,0.2) [scale=0.6]{$\bullet$};\node at (0.3,0.2)[scale=0.6] {$P_\ell^\tau$};}
+\alpha_{k,\ell}^{n}~~
\treeyy{\cdu{o} \node  at (0,-0.2) [scale=0.6]{$\bullet$};\node at (0.3,-0.2) [scale=0.6]{$P_k^\tau$};
\node  at (0,0.2) [scale=0.6]{$\bullet$};\node at (0.3,0.2)[scale=0.6] {$P_\ell^\omega$};}\,\Bigg|\, \omega,\tau\in\Omega\bigg\}^\perp
\quad\quad (\text{by Eq.~\meqref{eq:pr1}})\\
&=&\bfk\biggr\{\sum_{1\leq k,\ell\leq t}\lambda_{k,\ell}^{n}~~
\treeyy{\cdu{o} \node  at (0,-0.2) [scale=0.6]{$\bullet$};\node at (0.3,-0.2) [scale=0.6]{$(P_k^\omega)^\ast$};
\node  at (0,0.2) [scale=0.6]{$\bullet$};\node at (0.3,0.2)[scale=0.6] {$(P_\ell^\tau)^\ast$};}
,\,\, \treeyy{\cdu{o} \node  at (0,-0.2) [scale=0.6]{$\bullet$};\node at (0.3,-0.2) [scale=0.6]{$(P_k^\omega)^\ast$};
\node  at (0,0.2) [scale=0.6]{$\bullet$};\node at (0.3,0.2)[scale=0.6] {$(P_\ell^\tau)^\ast$};}-
\treeyy{\cdu{o} \node  at (0,-0.2) [scale=0.6]{$\bullet$};\node at (0.3,-0.2) [scale=0.6]{$(P_k^\tau)^\ast$};
\node  at (0,0.2) [scale=0.6]{$\bullet$};\node at (0.3,0.2)[scale=0.6] {$(P_\ell^\omega)^\ast$};} \quad\,\Bigg|\,\omega,\tau\in\Omega, {1\leq n\leq p_1}\biggr\}
\quad\quad (\text{by Eq.~\meqref{eq:ppr1}})\\
&=&\bfk\Big(\big(R(1)^\perp\big)_{\rm MT}\cup\big(R(1)^\perp\big)_{\rm TC}\Big).
\vsa
\end{eqnarray*}
This completes the proof of the first equation $\big({\lin{\mscr{P}}_\Omega}\big)^!=~\tot{\big(\mscr{P}^!\big)}_\Omega$ in the theorem.
Further by $(\mscr{P}^!)^!=\mscr{P}$, we obtain
$${\lin{\mscr{P}}_\Omega}=~\Big(\big({\lin{\mscr{P}}_\Omega}\big)^!\Big)^!=~\Big(\tot{(\mscr{P}^!)}_\Omega\Big)^!\,\text{ and so }\,
\lin{\Big(\mscr{P}^!\Big)}_\Omega = \Big(~\tot{\mscr{P}}_\Omega\Big)^!,$$
proving the second equation in the theorem.
\end{proof}
\vsa
Specializing to binary quadratic symmetric operads, we obtain the following result of Strohmayer.

\begin{corollary}~\mcite{St08}
Let $\Omega$ be a set with $|\Omega|=2$. Let $\mscr{P}=\mscr{T}(M)/\langle R\rangle $  be a binary quadratic symmetric operad. If  $M(2)$ is finite dimensional, then
$\big({\lin{\mscr{P}}_\Omega}\big)^!=~\tot{\big(\mscr{P}^!\big)}_\Omega$ and $\big(~\tot{\mscr{P}}_\Omega\big)^!=\lin{\big(\mscr{P}^!\big)}_\Omega$.
\mlabel{thm:dulu}
\end{corollary}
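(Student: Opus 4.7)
The plan is to deduce this corollary as a direct specialization of Theorem~\mref{thm:dul}. A binary quadratic symmetric operad is precisely the case of a unary binary quadratic symmetric operad in which the unary part vanishes, $M(1)=0$; the finite-dimensionality hypothesis on $M(1)$ is therefore vacuous, while the assumed finite-dimensionality of $M(2)$ matches the remaining hypothesis of Theorem~\mref{thm:dul}. The choice $|\Omega|=2$ is likewise an instance of $\Omega$ being a nonempty finite set, so all hypotheses of Theorem~\mref{thm:dul} are satisfied.

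Under the decomposition $R=R(1)\cup R(2)\cup R(3)$ used in the proof of Theorem~\mref{thm:dul}, the first two pieces collapse to zero because their defining tree monomials require vertices decorated by elements of $M(1)$. Consequently, the orthogonality identities displayed in (\mref{eq:lr123}) reduce to their third line, which is the only content that remains to be invoked. Theorem~\mref{thm:dul} then yields the first identity $\big(\lin{\mscr{P}}_\Omega\big)^!=\tot{(\mscr{P}^!)}_\Omega$ directly. The second identity $\big(\tot{\mscr{P}}_\Omega\big)^!=\lin{(\mscr{P}^!)}_\Omega$ follows by applying the first one to $\mscr{P}^!$ in place of $\mscr{P}$ and using the Koszul involution $(\mscr{P}^!)^!=\mscr{P}$.

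The only point requiring a brief sanity check is that all four operads in the conclusion stay within the binary quadratic framework, i.e.\ that $\mscr{P}^!$ is itself binary quadratic. This is immediate from Lemma~\mref{thm:ubdual}, since $M^\vee(1)=M(1)^{\ast}\otimes_{\mathrm{H}}\mathrm{sgn}_1=0$ forces the generators of $\mscr{P}^!$ to be concentrated in arity $2$. There is no genuine obstacle: the whole argument is extraction from a stronger theorem, and the stated conditions $|\Omega|=2$ and $\dim M(2)<\infty$ are precisely what one needs to land in the hypotheses of Theorem~\mref{thm:dul}.
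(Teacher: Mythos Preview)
Your proposal is correct and matches the paper's approach exactly: the paper simply states that the corollary is obtained by ``specializing to binary quadratic symmetric operads'' from Theorem~\mref{thm:dul}, without giving any further proof. Your additional remarks about $R(1)$ and $R(2)$ collapsing and about $\mscr{P}^!$ remaining binary quadratic are accurate elaborations, but they go beyond what the paper itself records.
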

\vsc
\subsection{Self duality among the matching compatibilities}
\mlabel{ss:matdual}
We now prove that the duality of operads leads to a duality among the matching compatible operads. In particular, the \lev matching compatibility is self dual.

Notice that a matching relation of
\vsa
$$r(X)=\sum_{0\leq i\leq s} \alpha_i\,(t_i,\dec_i)=\sum_{0\leq i\leq s} \alpha_i\,t_i(x^i,y^i)\in R \subseteq \mscr{T}(M)^{(2)}\, \text{ for } x^i,y^i\in X, 0\neq \alpha_i\in \bfk$$
 is of the form
 \vsb
\begin{equation}\mlabel{eq:mtrq}
 \alpha_0 t_0\left(x^0_{\omega_{1}},y^0_{\omega_{2}}\right)+ \sum\limits_{ 1\leq i \leq s }\alpha_i\, t_i\left(x^i_{\omega_{\sigma_i(1)}},y^i_{\omega_{\sigma_i(2)}}\right)
 \vsa
\end{equation}
for some  $\omega_1,\omega_2\in\Omega$, $\fc\in\mwc{2}$ and  $\vec{\sigma}=(\sigma_1,\ldots,\sigma_{s})\in {S_\sbin}^{s}$, where
\begin{align*}
\left\{
  \begin{array}{ll}
\sigma_i\in S_2,& \,\text{if }\fc=(\fc_{\omega_1},\fc_{\omega_2})=(1,1),\\
\sigma_i=e\in S_1,& \,\text{if }\fc=(\fc_{\omega_1},\fc_{\omega_2})=(2,0)\,\text{ or }\,(0,2),
  \end{array}
\right.\quad 1\leq i\leq s.
\end{align*}
We will use the linear dual notation $(V, W)^\ast:= (V^\ast , W^\ast)$ for a pair $(V, W)$ of vector spaces.

\begin{theorem}\mlabel{thm:mdul}
Let $\Omega$ be a nonempty finite set.
Let $\mscr{P}=\mscr{T}(M)/\langle R\rangle $ be a finitely generated unary binary quadratic operad and $\mscr{P^!}=\mscr{T}(M^\vee)/\langle R^\perp\rangle $ the Koszul dual operad of $\mscr{P}$.
\begin{enumerate}
\item For any $\pmc\in\pc{R}$, there exists $\pmd\in\pc{R^\perp}$ such that
$\Big({\mat{\mscr{P}}_\Omega}\Big)^!=~{\Big(\mscr{P}^!\Big)}^{{\rm MT}^\pmd}_\Omega.$
\mlabel{item:mkm}
\vsc
\item For the \lev matching compatible operad ${\lmat{\mscr{P}}_\Omega}$, we have  $\Big({\lmat{\mscr{P}}_\Omega}\Big)^!=~\lmat{\Big(\mscr{P}^!\Big)}_\Omega$.
\mlabel{item:lmklm}

\item If the operad $\mscr{P}$ is self-dual (that is $\mscr{P}\cong \mscr{P}^!$), then ${\mat{\mscr{P}}_\Omega}$ is also, for all $\pmc\in\pc{R}$.
\mlabel{item:lmklmd}
\end{enumerate}
\end{theorem}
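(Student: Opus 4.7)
My plan is to follow the template of the proof of Theorem~\mref{thm:dul}: fix bases for the generators of $\mscr{P}$ via Lemma~\mref{thm:ubdual}, write the matching relations explicitly in the form of~\meqref{eq:mtrq}, and then compute orthogonal complements inside $\stt(\bigoplus_{\omega \in \Omega} M^\vee_\omega)^{(2)}$ with the sign-twisted pairing coming from $M^\vee = M^\ast \underset{\rm H}{\otimes} \mathrm{sgn}$. The three parts reduce to a single computation about how orthogonality interacts with matching, and I would treat them in the order (a), (b), (c).

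For part~\mref{item:mkm}, I would fix a homogeneous quadratic relation $r(X) = \sum_{0 \leq i \leq s} \alpha_i\, t_i(x^i, y^i) \in R$ and a choice $\pmc \in \pc{R}$, writing each matching relation of $r$ in the explicit form~\meqref{eq:mtrq} with permutations $\vec{\sigma} = (\sigma_i)$. Passing to the orthogonal, one finds that the space $(r_{\mathrm{MT}^{\pmc}})^\perp$ is again spanned by elements of the form~\meqref{eq:mtrq}, but built from dual coefficients $(\lambda_i)\,t^\vee_i(x^{\vee,i}, y^{\vee,i}) \in R^\perp$ and with permutations $\vec{\tau} = (\tau_i)$ that are explicit functions of $\vec{\sigma}$. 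Collecting these over $c \in \mwc{2}$ defines a tuple $\pmd = (\vec{\tau}_c)_c$, and the invariance condition~\meqref{eq:matset} for $\pmd$ follows from the corresponding condition for $\pmc$ together with the $S_n$-equivariance of the orthogonal pairing.

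For part~\mref{item:lmklm}, the leveled matching compatibility is the one where every $\sigma_i$ is the identity with respect to the fixed preorder traversal, so that the $\Omega$-labels at each vertex position agree across all terms of the relation. The correspondence $\vec{\sigma} \mapsto \vec{\tau}$ of part~\mref{item:mkm} preserves the identity configuration, since in that case the sign twist in the pairing acts symmetrically across all terms and contributes no nontrivial permutation of labels. This yields $\big(\lmat{\mscr{P}}_\Omega\big)^! = \lmat{(\mscr{P}^!)}_\Omega$. Part~\mref{item:lmklmd} is then immediate: a self-duality isomorphism $\phi: \mscr{P} \xrightarrow{\cong} \mscr{P}^!$ pulls back the bijection $\pc{R} \to \pc{R^\perp}$ from part~\mref{item:mkm} to a self-bijection of $\pc{R}$, so each matching operad $\mat{\mscr{P}}_\Omega$ is also self-dual.

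The main obstacle will be in part~\mref{item:mkm}: making the bijection $\vec{\sigma} \mapsto \vec{\tau}$ fully explicit and verifying the invariance condition~\meqref{eq:matset} for $\pmd$. The sign twist in $M^\vee$ makes the orthogonal pairing sensitive to the order in which the decorations $x^i_\omega, y^i_\omega$ appear in each term, and the bookkeeping required to align this with the matching permutations $\sigma_i$ is where the real work lies; once that is clean, the identity case in part~\mref{item:lmklm} and the transport under self-duality in part~\mref{item:lmklmd} should follow smoothly.
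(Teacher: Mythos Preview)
Your outline is correct and follows essentially the same approach as the paper: reduce to the orthogonality computation in $\stt(\bigoplus_\omega M^\vee_\omega)^{(2)}$, split by arity as in Theorem~\mref{thm:dul}, and read off $\pmd$ from $\pmc$. One simplification you are missing: the sign twist in $M^\vee$ is a red herring for the $\Omega$-labeling, since under the natural identification $(\bigoplus_\omega M_\omega)^\vee \cong \bigoplus_\omega M_\omega^\vee$ the dual of $t(x_{\omega_{\sigma(1)}}, y_{\omega_{\sigma(2)}})$ is simply $t(x^\ast_{\omega_{\sigma(1)}}, y^\ast_{\omega_{\sigma(2)}})$, so the correspondence $\vec{\sigma}\mapsto\vec{\tau}$ is the identity and the ``bookkeeping'' you anticipate in part~\mref{item:mkm} disappears---this is exactly how the paper defines $\pmd$ in~\meqref{eq:dualp}, and it makes~\mref{item:lmklm} and~\mref{item:lmklmd} immediate.
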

\begin{proof}
\mref{item:mkm}
Write
\vsc
$$\pmc=\Big\{\vec{\sigmab}_{\fc,r}\Big\}_{\fc\in \mwc{2}, r\in R}=\Big\{(\sigma_{\fc,r,1},\ldots, \sigma_{\fc,r,|{\rm Supp(r)}|-1})\Big\}_{\fc\in \mwc{2}, r\in R}\in\pc{R}.$$
with $\sigma_{\fc,r,1},\ldots, \sigma_{\fc,r,|{\rm Supp(r)}|-1}\in S_{\tbinom{2}{\fc }}$. Define
\vsb
$$\pmd=\Big\{\vec{\taub}_{\fc,r'}\Big\}_{\fc\in \mwc{2}, r'\in R^\perp}=\Big\{(\tau_{\fc,r',1},\ldots, \tau_{\fc,r',|{\rm Supp(r')}|-1})\Big\}_{\fc\in \mwc{2}, r'\in R^\perp}\in\pc{R^\perp}$$
by
\vsc
\begin{equation}
t\left(x^\ast_{\omega_{\pmd_{\fc,r'}(1)}},y^\ast_{\omega_{\pmd_{\fc,r'}(2)}}\right)
=t\left(x_{{\omega_{\pmc_{\fc,r}(1)}}},y_{\omega_{\pmc_{\fc,r}(2)}}\right)^\ast
=t\left(x_{{\omega_{\pmc_{\fc,r}(1)}}}^\ast,y_{\omega_{\pmc_{\fc,r}(2)}}^\ast\right),\quad \sigma_{\fc,r},\tau_{\fc,r'}\in S_{\tbinom{2}{\fc }}
\mlabel{eq:dualp}
\vsa
\end{equation}
for $t(x,y)\in{\rm Supp}(r)$ and $t(x^\ast,y^\ast)\in{\rm Supp}(r')$.

By Definition~\mref{de:mat} and Lemma~\mref{thm:ubdual}, we obtain
\vsb
$$
\Big(\mat{\mscr{P}}_\Omega\Big)^!=\mscr{T}\Big(\big(\bigoplus\limits_{\omega\in\Omega} M_\omega\big)^\vee\Big)\Big/\Big\langle (R_{{\rm MT}^{\pmc}})^\perp\Big\rangle,\quad
{\mscr{\Big(P^!\Big)}}_\Omega^{{\rm MT}^{\pmd}} =\mscr{T}\Big(\bigoplus\limits_{\omega\in\Omega} M^\vee_\omega\Big)\Big/\Big\langle (R^\perp)_{{\rm MT}^{\pmd}}\Big\rangle.
\vsb
$$
Under the identification $\Big(\bigoplus\limits_{\omega\in\Omega} M_\omega\Big)^\vee\cong \bigoplus\limits_{\omega\in\Omega} M^\vee_\omega$, we can identify $\mscr{T}\Big((\bigoplus\limits_{\omega\in\Omega} M_\omega)^\vee\Big)$ with $\mscr{T}\Big(\bigoplus\limits_{\omega\in\Omega} M^\vee_\omega\Big)$. Consequently, we only need to check
\vsa
\begin{equation}
\Big\{\big((R_{{\rm MT}^{\pmc}})(i)\big)^\perp\Big\}=\Big\{\big(R^\perp(i)\big)_{{\rm MT}^{\pmd}}\Big\},\quad  i=1,2,3.
\mlabel{eq:mmrdd}
\vsa
\end{equation}
We just verify Eq.~\meqref{eq:mmrdd} for $i=1$, as the other two cases are similarly verified.
We first write
\vsa
$$R(1)=\bigg\{r_n:=\sum_{0\leq i\leq s_n} \alpha_{i,n}\,(t_i,\dec_i)=\sum_{0\leq i\leq s_n} \alpha_{i,n}\,t_i(x^i,y^i)\,\Big|\, x^i,y^i\in X(1), 0\neq \alpha_{i,n}\in \bfk, 1\leq n\leq p\bigg\}
\vsc
$$
and
\vsa
$$R^\perp(1)=\bigg\{r_n':=\sum_{0\leq i\leq s_n} \beta_{i,n}\,(t_i,\dec_i^\ast)=\sum_{0\leq i\leq s_n} \beta_{i,n}\,t_i\big((x^i)^\ast,(y^i)^\ast\big)\,\Big|\, (x^i)^\ast,(y^i)^\ast\in X(1)^\ast, 0\neq \beta_{i,n}\in \bfk, 1\leq n\leq q\bigg\}.
\vsb
$$
On the one hand, by Eq.~\meqref{eq:mtrq} we have
\vsa
\begin{equation*}
(R_{{\rm MT}^{\pmc}})(1)= ~
\bigg\{ \alpha_{0,n} t_0\left(x^0_{\omega_{1}},y^0_{\omega_{2}}\right)+ \sum\limits_{ 1\leq i \leq s_n }\alpha_{i,n}\, t_i\left(x^i_{\omega_{\sigma_i(1)}},y^i_{\omega_{\sigma_i(2)}}\right)
\,\Big|\,\omega_1,\omega_2\in\Omega, \sigma_i\in S_2
\bigg\}
\mlabel{eq:ppmm}
\vsb
\end{equation*}
which yields
\vsb
\begin{equation*}
\left((R_{{\rm MT}^{\pmc}})(1)\right)^\perp= ~
\bigg\{ \beta_{0,n} t_0\left((x^0_{\omega_{1}})^\ast,(y^0_{\omega_{2}})^\ast\right)+ \sum\limits_{ 1\leq i \leq s_n }\beta_{i,n}\, t_i\left((x^i_{\omega_{\sigma_i(1)}})^\ast,(y^i_{\omega_{\sigma_i(2)}})^\ast\right)
\,\Big|\,\omega_1,\omega_2\in\Omega,\sigma_i\in S_2
\bigg\}.
\vsa
\end{equation*}
On the other hand, by Eqs.~\meqref{eq:mtrq} and~\meqref{eq:dualp}, we have
\vsb
\begin{align*}
(R^\perp(1))_{{\rm MT}^\pmd}
=&~\bigg\{ \beta_{0,n} t_0\left((x^0_{\omega_{1}})^\ast,(y^0_{\omega_{2}})^\ast\right)+ \sum\limits_{ 1\leq i \leq s_n }\beta_{i,n}\, t_i\left((x^i_{\omega_{\tau_i(1)}})^\ast,(y^i_{\omega_{\tau_i(2)}})^\ast\right)
\,\Big|\,\omega_1,\omega_2\in\Omega,\tau_i\in S_2
\bigg\}\\
=&~\bigg\{ \beta_{0,n} t_0\left((x^0_{\omega_{1}})^\ast,(y^0_{\omega_{2}})^\ast\right)+ \sum\limits_{ 1\leq i \leq s_n }\beta_{i,n}\, t_i\left((x^i_{\omega_{\sigma_i(1)}})^\ast,(y^i_{\omega_{\sigma_i(2)}})^\ast\right)
\,\Big|\,\omega_1,\omega_2\in\Omega,\sigma_i\in S_2
\bigg\}\\
=&~\left((R_{{\rm MT}^{\pmc}})(1)\right)^\perp.
\vsb
\end{align*}
Therefore $\big(\mrr(1)\big)^\perp=(R^\perp(1))^{{\rm MT}^\pmd}$ and so $\big(\mrr\big)^\perp=(R^\perp)_{{\rm MT}^\pmd}$
.
\\
\noindent
\mref{item:lmklm}
It is a special case of the proof of Item~\mref{item:mkm} for
\vsb
$$t\left(x^\ast_{\omega_{\pmd_{\fc,r'}(1)}},y^\ast_{\omega_{\pmd_{\fc,r'}(2)}}\right)
=t\left(x_{{\omega_{\pmc_{\fc,r}(1)}}},y_{\omega_{\pmc_{\fc,r}(2)}}\right)^\ast=t(x^\ast_{\omega_1},y^\ast_{\omega_2}).
\vsb$$
\\
\noindent
\mref{item:lmklmd}  For $\pmc\in\pc{R}$, {let $\pmd\in \pc{R^\perp}$ be as } defined by Eq.~\meqref{eq:dualp}.
Identifying $R$ with $R^\perp$, then $\pmc=\pmd$.
By Item~\mref{item:mkm}, we have
$\Big({\mat{\mscr{P}}_\Omega}\Big)^!=~{\Big(\mscr{P}^!\Big)}^{{\rm MT}^\pmd}_\Omega\cong ~{\mscr{P}}_\Omega^{{\rm MT}^\pmd}=~{\mscr{P}}_\Omega^{{\rm MT}^\pmc},
$
completing the proof.
\end{proof}
\vsc
\subsection{Compatible operads and Manin products}
\mlabel{ss:manin}
We now show that the compatibilities of arbitrary finitely generated binary quadratic operad can be obtained by the Manin product of that operad with some specific compatible structures.

To fix notations, we start with the definition of the Manin products of a pair of operads as can be found in~\mcite{LV, Val08}.
Let $\spp=\stt(M)/\langle R\rangle$ and $\sqq=\stt(N)/\langle S\rangle$ be finitely generated binary quadratic operads. Write
\begin{equation*}
R= ~\Biggr\{\sum
\kaa{i}{j}{1}\treey{\cdlr{ol}\node at (oll)[above=0.2] [scale=0.8]{$1$};\node at (olr)[above=0.2] [scale=0.8]{$2$};\node at (or)[above=0.2] [scale=0.8]{$3$};
\node at  (ol)[left=0.2] [scale=0.8]{$\mu_i$};\node at  (o)[left=0.2] [scale=0.8]{$\mu_j$};}
+\kaa{i}{j}{2}\treey{\cdlr{ol}\node at (oll)[above=0.2] [scale=0.8]{$2$};\node at (olr)[above=0.2] [scale=0.8]{$3$};\node at (or)[above=0.2] [scale=0.8]{$1$};
\node at  (ol)[left=0.2] [scale=0.8]{$\mu_i$};\node at  (o)[left=0.2] [scale=0.8]{$\mu_j$};}
+\kaa{i}{j}{3}\treey{\cdlr{ol}\node at (oll)[above=0.2] [scale=0.8]{$3$};\node at (olr)[above=0.2] [scale=0.8]{$1$};\node at (or)[above=0.2] [scale=0.8]{$2$};
\node at  (ol)[left=0.2] [scale=0.8]{$\mu_i$};\node at  (o)[left=0.2] [scale=0.8]{$\mu_j$};}
\quad \Bigg| \, \mu_i,\mu_j\in M(2)=M ,\kaa{i}{j}{1}, \kaa{i}{j}{2},\kaa{i}{j}{3}\in\bfk \Biggr\}
\vsb
\end{equation*}
and
\vsb
\begin{equation*}
S= ~\Biggr\{\sum
\ka{k}{\ell}{1}\treey{\cdlr{ol}\node at (oll)[above=0.2] [scale=0.8]{$1$};\node at (olr)[above=0.2] [scale=0.8]{$2$};\node at (or)[above=0.2] [scale=0.8]{$3$};
\node at  (ol)[left=0.2] [scale=0.8]{$\nu_k$};\node at  (o)[left=0.2] [scale=0.8]{$\nu_\ell$};}
+\ka{k}{\ell}{2}\treey{\cdlr{ol}\node at (oll)[above=0.2] [scale=0.8]{$2$};\node at (olr)[above=0.2] [scale=0.8]{$3$};\node at (or)[above=0.2] [scale=0.8]{$1$};
\node at  (ol)[left=0.2] [scale=0.8]{$\nu_k$};\node at  (o)[left=0.2] [scale=0.8]{$\nu_\ell$};}
+\ka{k}{\ell}{3}\treey{\cdlr{ol}\node at (oll)[above=0.2] [scale=0.8]{$3$};\node at (olr)[above=0.2] [scale=0.8]{$1$};\node at (or)[above=0.2] [scale=0.8]{$2$};
\node at  (ol)[left=0.2] [scale=0.8]{$\nu_k$};\node at  (o)[left=0.2] [scale=0.8]{$\nu_\ell$};}
\quad \Bigg| \, \nu_k,\nu_\ell\in N(2)=N, \ka{k}{\ell}{1}, \ka{k}{\ell}{2},\ka{k}{\ell}{3}\in\bfk \Biggr\}.
\vsa
\end{equation*}
Define
\vsb
\begin{equation} \notag
R~\bigdot ~S:= ~\Biggr\{\sum
\kaa{i}{j}{1}\ka{k}{\ell}{1}\treey{\cdlr{ol}\node at (oll)[above=0.2] [scale=0.8]{$1$};\node at (olr)[above=0.2] [scale=0.8]{$2$};\node at (or)[above=0.2] [scale=0.8]{$3$};
\node at  (ol)[left=0.8] [scale=0.8]{$\mu_i\otimes \nu_k$};\node at  (o)[left=0.2] [scale=0.8]{$\mu_j\otimes \nu_\ell$};}
+\kaa{i}{j}{2}\ka{k}{\ell}{2}\treey{\cdlr{ol}\node at (oll)[above=0.2] [scale=0.8]{$2$};\node at (olr)[above=0.2] [scale=0.8]{$3$};\node at (or)[above=0.2] [scale=0.8]{$1$};
\node at  (ol)[left=0.2] [scale=0.8]{$\mu_i\otimes \nu_k$};\node at  (o)[left=0.2] [scale=0.8]{$\mu_j\otimes \nu_\ell$};}
+\kaa{i}{j}{3}\ka{k}{\ell}{3}\treey{\cdlr{ol}\node at (oll)[above=0.2] [scale=0.8]{$3$};\node at (olr)[above=0.2] [scale=0.8]{$1$};\node at (or)[above=0.2] [scale=0.8]{$2$};
\node at  (ol)[left=0.2] [scale=0.8]{$\mu_i\otimes \nu_k$};\node at  (o)[left=0.2] [scale=0.8]{$\mu_j\otimes \nu_\ell$};}
\quad \Bigg| \,\mu_i\otimes \nu_k,\mu_j\otimes \nu_\ell\in M(2)\otimes N(2)\Biggr\},
\mlabel{eq:mb}
\end{equation}
and
\vsb
$$R ~\bigcir~S:= \Phi^{-1}\Big(R\otimes \stt\big(N\big)(3)+ \stt\big(M\big)(3)\otimes S\Big),$$
where the map $\Phi:\stt\big(M\otimes N\big)(3)\to\stt\big(M\big)(3)\otimes \stt\big(N\big)(3)$ is given by
$$ \treey{\cdlr{ol}\node at (oll)[above=0.2] [scale=0.8]{$a$};\node at (olr)[above=0.2] [scale=0.8]{$b$};\node at (or)[above=0.2] [scale=0.8]{$c$};
\node at  (ol)[left=0.8] [scale=0.8]{$\mu_i\otimes \nu_k$};\node at  (o)[left=0.2] [scale=0.8]{$\mu_j\otimes \nu_\ell$};}
\mapsto
\treey{\cdlr{ol}\node at (oll)[above=0.2] [scale=0.8]{$a$};\node at (olr)[above=0.2] [scale=0.8]{$b$};\node at (or)[above=0.2] [scale=0.8]{$c$};
\node at  (ol)[left=0.8] [scale=0.8]{$\mu_i$};\node at  (o)[left=0.2] [scale=0.8]{$\mu_j$};}
\otimes
\treey{\cdlr{ol}\node at (oll)[above=0.2] [scale=0.8]{$a$};\node at (olr)[above=0.2] [scale=0.8]{$b$};\node at (or)[above=0.2] [scale=0.8]{$c$};
\node at  (ol)[left=0.8] [scale=0.8]{$ \nu_k$};\node at  (o)[left=0.2] [scale=0.8]{$ \nu_\ell$};}\,\text{ for }\, (a,b,c)\in\{(1,2,3), (2,3,1), (3,1,2)\}.$$
{The above map $\Phi$ essentially comes from the map $M^{\otimes 2}\otimes N^{\otimes 2}\to (M\otimes N)^{\otimes 2}$.}
\begin{definition}\mcite{LV,Val08}
Let $\spp=\stt(M)/\langle R\rangle$ and $\sqq=\stt(N)/\langle S\rangle$ be finitely generated binary quadratic operads.
\begin{enumerate}
\item The {\bf Manin black product } of $\spp$ and $\sqq$ is the operad
$\spp\bigdot\sqq:= \stt(M\otimes N) /\langle R~\bigdot ~S\rangle.$
\mlabel{it:mbp}

\item The {\bf Manin white product } of $\spp$ and $\sqq$ is the operad
$\spp\bigcir\sqq:= \stt(M\otimes N) /\langle R~\bigcir ~S\rangle.$
\mlabel{it:mwp}
\end{enumerate}
\mlabel{de:mp}
\end{definition}
\vsa
It is well known that $(\spp\bigdot\sqq)^!=\spp^!\bigcir\sqq^!$ for finitely generated binary quadratic operads $\spp$ and $\sqq$. Denote by $\lie$ (resp. $\com$) the operad of Lie algebras (resp. commutative algebras).

\begin{proposition}\mlabel{prop:maninbl}
Let $\Omega$ be a nonempty finite set. Let $\spp$ be a finitely generated binary quadratic operad.
Then
\vsb
$$\lin{\spp}_\Omega\cong \lin{\lie}_\Omega\bigdot\spp\,\text{ and }\, \tot{\spp}_\Omega\cong \tot{\com}_\Omega \bigcir \spp.$$
\end{proposition}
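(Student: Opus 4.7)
The plan is to establish the first isomorphism $\lin{\spp}_\Omega \cong \lin{\lie}_\Omega \bigdot \spp$ directly, then derive the second by Koszul duality. Present $\lie = \stt(N)/\langle R_{\mathrm{Lie}}\rangle$ where $N=N(2)$ is one-dimensional (the sign representation of $S_2$) and $R_{\mathrm{Lie}}$ is the Jacobi identity. Since each $N_\omega$ is one-dimensional, the generators $\bigoplus_{\omega\in\Omega}(N_\omega\otimes M)$ of $\lin{\lie}_\Omega\bigdot\spp$ are canonically identified (up to the sign twist from antisymmetry) with the generators $\bigoplus_{\omega\in\Omega}M_\omega$ of $\lin{\spp}_\Omega$. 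As a sanity check, when $|\Omega|=1$ both sides reduce to $\spp$, consistent with the fact that $\lie$ is the unit of the Manin black product (dual, via Theorem~\mref{thm:dul} and $\lie^!=\com$, to the fact that $\com$ is the unit of the white product).

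Next I compare relations. The relations of $\lin{\spp}_\Omega$ form the set $\lrr$ of polars $r_\fc$ for $r\in R$ and $\fc\in\mwc{2}$ (Definition~\mref{de:comp1}). By Eq.~\meqref{eq:mb}, the relations of $\lin{\lie}_\Omega\bigdot\spp$ are entry-wise products of polars of Jacobi with coefficient tuples of elements of $R$, expanded in the cyclic arity-$3$ basis $(t^{123},t^{231},t^{312})$. The central observation is that each polar of Jacobi of type $\fc$ attaches the $\omega$-labels of type $\fc$ to the two internal vertices of each cyclic tree with the same multiplicity across all cyclic positions (because Jacobi has uniform coefficient $(1,1,1)$). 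Entry-wise multiplication with the coefficient tuple of a relation $r\in R$ then reproduces exactly the polar $r_\fc$ under the generator identification, so $\lrr$ and the defining relations of $\lin{\lie}_\Omega\bigdot\spp$ span the same subspace of $\stt(\bigoplus_\omega M_\omega)(3)$.

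The second isomorphism $\tot{\spp}_\Omega\cong\tot{\com}_\Omega\bigcir\spp$ follows from the first by Koszul duality. Three ingredients are used: Theorem~\mref{thm:dul}, giving $(\lin{\sqq}_\Omega)^!=\tot{(\sqq^!)}_\Omega$; the standard formula $(\mscr{A}\bigdot\mscr{B})^!=\mscr{A}^!\bigcir\mscr{B}^!$ for finitely generated binary quadratic operads; and $\lie^!=\com$. Applying the first isomorphism to $\spp^!$ gives $\lin{(\spp^!)}_\Omega\cong\lin{\lie}_\Omega\bigdot\spp^!$, and taking Koszul duals of both sides yields $\tot{\spp}_\Omega\cong(\lin{\lie}_\Omega)^!\bigcir(\spp^!)^!=\tot{\com}_\Omega\bigcir\spp$.

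I expect the main obstacle to be the bookkeeping in the relation comparison. One must simultaneously track the $\omega$-labelings of the two internal arity-$3$ vertices, the cyclic $S_3$-structure organizing the basis $(t^{123},t^{231},t^{312})$, and the sign twist coming from the antisymmetry of the Lie bracket in $N_\omega\otimes M$. Writing the type-$(1,1)$ polar explicitly for a generic $r$ and reading its coefficients as an element of the Manin black product of the Jacobi polars with $R$ should be the decisive verification; the types $(2,0)$ and $(0,2)$ then follow as specializations.
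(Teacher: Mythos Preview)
Your proposal is correct and follows essentially the same route as the paper: establish $\lin{\spp}_\Omega \cong \lin{\lie}_\Omega\bigdot\spp$ by directly matching generators and relations (the paper does this via an explicit computation of $\lrrs{\lier}\bigdot R$ rather than your more structural description, but the content is identical), and then deduce $\tot{\spp}_\Omega \cong \tot{\com}_\Omega\bigcir\spp$ from $(\spp\bigdot\sqq)^!=\spp^!\bigcir\sqq^!$, $\lie^!=\com$, and Theorem~\mref{thm:dul}. Your flagging of the sign twist from the antisymmetry of the Lie generator is apt bookkeeping that the paper leaves implicit.
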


\begin{proof}
It suffices to prove $\lin{\spp}_\Omega\cong \lin{\lie}_\Omega\bigdot\spp$, as then $\tot{\spp}_\Omega\cong \tot{\com}_\Omega \bigcir \spp$ follows from the facts $(\spp\bigdot\sqq)^!=\spp^!\bigcir\sqq^!$, $\lie^!=\com$ and Theorem~\mref{thm:dul}.

First, the operad $\lie$ is presented as $\lie=\stt(\lieg)/\langle\lier \rangle$, where
$$\lieg=\Big\{0,0,\bfk \treey{\cdlr{o}},\cdots\Big\}, \quad \lier=~\Biggr\{\treey{\cdlr{ol}\node at (oll)[above=0.2] [scale=0.8]{$1$};\node at (olr)[above=0.2] [scale=0.8]{$2$};\node at (or)[above=0.2] [scale=0.8]{$3$};}
+\treey{\cdlr{ol}\node at (oll)[above=0.2] [scale=0.8]{$2$};\node at (olr)[above=0.2] [scale=0.8]{$3$};\node at (or)[above=0.2] [scale=0.8]{$1$};}
+\treey{\cdlr{ol}\node at (oll)[above=0.2] [scale=0.8]{$3$};\node at (olr)[above=0.2] [scale=0.8]{$1$};\node at (or)[above=0.2] [scale=0.8]{$2$};} \Biggr\}.$$
\vsc
By a direct calculation, we obtain
$$\lrrs{\lier}=\Biggr\{\treey{\cdlr{ol}\node at (oll)[above=0.2] [scale=0.8]{$1$};\node at (olr)[above=0.2] [scale=0.8]{$2$};\node at (or)[above=0.2] [scale=0.8]{$3$};
\node at  (ol)[left=0.2] [scale=0.8]{$\omega$};\node at  (o)[left=0.2] [scale=0.8]{$\omega$};}
+\treey{\cdlr{ol}\node at (oll)[above=0.2] [scale=0.8]{$2$};\node at (olr)[above=0.2] [scale=0.8]{$3$};\node at (or)[above=0.2] [scale=0.8]{$1$};
\node at  (ol)[left=0.2] [scale=0.8]{$\omega$};\node at  (o)[left=0.2] [scale=0.8]{$\omega$};}
+\treey{\cdlr{ol}\node at (oll)[above=0.2] [scale=0.8]{$3$};\node at (olr)[above=0.2] [scale=0.8]{$1$};\node at (or)[above=0.2] [scale=0.8]{$2$};
\node at  (ol)[left=0.2] [scale=0.8]{$\omega$};\node at  (o)[left=0.2] [scale=0.8]{$\omega$};}
,\quad
\treey{\cdlr{ol}\node at (oll)[above=0.2] [scale=0.8]{$1$};\node at (olr)[above=0.2] [scale=0.8]{$2$};\node at (or)[above=0.2] [scale=0.8]{$3$};
\node at  (ol)[left=0.2] [scale=0.8]{$\omega$};\node at  (o)[left=0.2] [scale=0.8]{$\tau$};}
+\treey{\cdlr{ol}\node at (oll)[above=0.2] [scale=0.8]{$2$};\node at (olr)[above=0.2] [scale=0.8]{$3$};\node at (or)[above=0.2] [scale=0.8]{$1$};
\node at  (ol)[left=0.2] [scale=0.8]{$\omega$};\node at  (o)[left=0.2] [scale=0.8]{$\tau$};}
+\treey{\cdlr{ol}\node at (oll)[above=0.2] [scale=0.8]{$3$};\node at (olr)[above=0.2] [scale=0.8]{$1$};\node at (or)[above=0.2] [scale=0.8]{$2$};
\node at  (ol)[left=0.2] [scale=0.8]{$\omega$};\node at  (o)[left=0.2] [scale=0.8]{$\tau$};}
+\treey{\cdlr{ol}\node at (oll)[above=0.2] [scale=0.8]{$1$};\node at (olr)[above=0.2] [scale=0.8]{$2$};\node at (or)[above=0.2] [scale=0.8]{$3$};
\node at  (ol)[left=0.2] [scale=0.8]{$\tau$};\node at  (o)[left=0.2] [scale=0.8]{$\omega$};}
+\treey{\cdlr{ol}\node at (oll)[above=0.2] [scale=0.8]{$2$};\node at (olr)[above=0.2] [scale=0.8]{$3$};\node at (or)[above=0.2] [scale=0.8]{$1$};
\node at  (ol)[left=0.2] [scale=0.8]{$\tau$};\node at  (o)[left=0.2] [scale=0.8]{$\omega$};}
+\treey{\cdlr{ol}\node at (oll)[above=0.2] [scale=0.8]{$3$};\node at (olr)[above=0.2] [scale=0.8]{$1$};\node at (or)[above=0.2] [scale=0.8]{$2$};
\node at  (ol)[left=0.2] [scale=0.8]{$\tau$};\node at  (o)[left=0.2] [scale=0.8]{$\omega$};}
\,\Bigg|\,\omega,\tau\in\Omega \Biggr\}.
\vsa
$$
Let
\vsb
\begin{equation}
M(2)=~\bfk	\Big\{
\treey{\cdlr{o}\node  at (0,0.25)[scale=0.8] {$\mu_1$};},
\treey{\cdlr{o}\node  at (0,0.25) [scale=0.8]{$\mu_2$};},~\ldots ~,
\treey{\cdlr{o}\node  at (0,0.25) [scale=0.8]{$\mu_s$};}
\Big\},\,1<s<\infty,
\mlabel{eq:m2}
\vsa
\end{equation}
and
\vsb
\begin{equation}
R= ~\Biggr\{\sum
\kaa{i}{j}{1}\treey{\cdlr{ol}\node at (oll)[above=0.2] [scale=0.8]{$1$};\node at (olr)[above=0.2] [scale=0.8]{$2$};\node at (or)[above=0.2] [scale=0.8]{$3$};
\node at  (ol)[left=0.2] [scale=0.8]{$\mu_i$};\node at  (o)[left=0.2] [scale=0.8]{$\mu_j$};}
+\kaa{i}{j}{2}\treey{\cdlr{ol}\node at (oll)[above=0.2] [scale=0.8]{$2$};\node at (olr)[above=0.2] [scale=0.8]{$3$};\node at (or)[above=0.2] [scale=0.8]{$1$};
\node at  (ol)[left=0.2] [scale=0.8]{$\mu_i$};\node at  (o)[left=0.2] [scale=0.8]{$\mu_j$};}
+\kaa{i}{j}{3}\treey{\cdlr{ol}\node at (oll)[above=0.2] [scale=0.8]{$3$};\node at (olr)[above=0.2] [scale=0.8]{$1$};\node at (or)[above=0.2] [scale=0.8]{$2$};
\node at  (ol)[left=0.2] [scale=0.8]{$\mu_i$};\node at  (o)[left=0.2] [scale=0.8]{$\mu_j$};}
\quad \Bigg| \, \mu_i,\mu_j\in M(2)=M ,\kaa{i}{j}{1}, \kaa{i}{j}{2},\kaa{i}{j}{3}\in\bfk \Biggr\}.
\mlabel{eq:relar}
\vsa
\end{equation}
By Definition~\mref{de:mp}\mref{it:mbp}, we have
\vsb
$$\lin{\lie}_\Omega\bigdot\spp=\stt\bigg(\Big(\bigoplus_{\omega\in\Omega}\lieg_{,\omega}\Big)\otimes M\bigg)\Big/\bigg\langle \lrrs{\lier}~\bigdot ~R\bigg\rangle,
\vsc$$
where
$$\Big(\bigoplus_{\omega\in\Omega}\lieg_{,\omega}\Big)\otimes M
=\Big(\bigoplus_{\omega\in\Omega}\treey{\cdlr{o}\node at  (0,0.25) [scale=0.8]{$\omega$};}\Big)\otimes
\Big(\bigoplus_{1\leq i \leq s}\treey{\cdlr{o}\node at  (0,0.25) [scale=0.8]{$\mu_i$};}\Big)
=\bigoplus_{\omega\in\Omega, 1\leq i \leq s}\treey{\cdlr{o}\node at  (o)[left=0.8] [scale=0.8]{$\omega\otimes \mu_i$};}
\cong \bigoplus_{\omega\in\Omega, 1\leq i \leq s}\treey{\cdlr{o}\node at  (0,0.25) [scale=0.8]{$ \mu_i^\omega$};}
=\bigoplus_{\omega\in\Omega}M_\omega,
\vsb
$$
and
\vsb
{\small
\begin{eqnarray*}
\lrrs{\lier}~\bigdot~ R
&=&\Biggr\{\treey{\cdlr{ol}\node at (oll)[above=0.2] [scale=0.8]{$1$};\node at (olr)[above=0.2] [scale=0.8]{$2$};\node at (or)[above=0.2] [scale=0.8]{$3$};
\node at  (ol)[left=0.2] [scale=0.8]{$\omega$};\node at  (o)[left=0.2] [scale=0.8]{$\tau$};}
+\treey{\cdlr{ol}\node at (oll)[above=0.2] [scale=0.8]{$2$};\node at (olr)[above=0.2] [scale=0.8]{$3$};\node at (or)[above=0.2] [scale=0.8]{$1$};
\node at  (ol)[left=0.2] [scale=0.8]{$\omega$};\node at  (o)[left=0.2] [scale=0.8]{$\tau$};}
+\treey{\cdlr{ol}\node at (oll)[above=0.2] [scale=0.8]{$3$};\node at (olr)[above=0.2] [scale=0.8]{$1$};\node at (or)[above=0.2] [scale=0.8]{$2$};
\node at  (ol)[left=0.2] [scale=0.8]{$\omega$};\node at  (o)[left=0.2] [scale=0.8]{$\tau$};}
+\treey{\cdlr{ol}\node at (oll)[above=0.2] [scale=0.8]{$1$};\node at (olr)[above=0.2] [scale=0.8]{$2$};\node at (or)[above=0.2] [scale=0.8]{$3$};
\node at  (ol)[left=0.2] [scale=0.8]{$\tau$};\node at  (o)[left=0.2] [scale=0.8]{$\omega$};}
+\treey{\cdlr{ol}\node at (oll)[above=0.2] [scale=0.8]{$2$};\node at (olr)[above=0.2] [scale=0.8]{$3$};\node at (or)[above=0.2] [scale=0.8]{$1$};
\node at  (ol)[left=0.2] [scale=0.8]{$\tau$};\node at  (o)[left=0.2] [scale=0.8]{$\omega$};}
+\treey{\cdlr{ol}\node at (oll)[above=0.2] [scale=0.8]{$3$};\node at (olr)[above=0.2] [scale=0.8]{$1$};\node at (or)[above=0.2] [scale=0.8]{$2$};
\node at  (ol)[left=0.2] [scale=0.8]{$\tau$};\node at  (o)[left=0.2] [scale=0.8]{$\omega$};}
,\,
\treey{\cdlr{ol}\node at (oll)[above=0.2] [scale=0.8]{$1$};\node at (olr)[above=0.2] [scale=0.8]{$2$};\node at (or)[above=0.2] [scale=0.8]{$3$};
\node at  (ol)[left=0.2] [scale=0.8]{$\omega$};\node at  (o)[left=0.2] [scale=0.8]{$\omega$};}
+\treey{\cdlr{ol}\node at (oll)[above=0.2] [scale=0.8]{$2$};\node at (olr)[above=0.2] [scale=0.8]{$3$};\node at (or)[above=0.2] [scale=0.8]{$1$};
\node at  (ol)[left=0.2] [scale=0.8]{$\omega$};\node at  (o)[left=0.2] [scale=0.8]{$\omega$};}
+\treey{\cdlr{ol}\node at (oll)[above=0.2] [scale=0.8]{$3$};\node at (olr)[above=0.2] [scale=0.8]{$1$};\node at (or)[above=0.2] [scale=0.8]{$2$};
\node at  (ol)[left=0.2] [scale=0.8]{$\omega$};\node at  (o)[left=0.2] [scale=0.8]{$\omega$};}
\, \Bigg|\,\omega,\tau\in\Omega \Biggr\}\\
&&
\bigdot~\Biggr\{\sum
\kaa{i}{j}{1}\treey{\cdlr{ol}\node at (oll)[above=0.2] [scale=0.8]{$1$};\node at (olr)[above=0.2] [scale=0.8]{$2$};\node at (or)[above=0.2] [scale=0.8]{$3$};
\node at  (ol)[left=0.2] [scale=0.8]{$\mu_i$};\node at  (o)[left=0.2] [scale=0.8]{$\mu_j$};}
+\kaa{i}{j}{2}\treey{\cdlr{ol}\node at (oll)[above=0.2] [scale=0.8]{$2$};\node at (olr)[above=0.2] [scale=0.8]{$3$};\node at (or)[above=0.2] [scale=0.8]{$1$};
\node at  (ol)[left=0.2] [scale=0.8]{$\mu_i$};\node at  (o)[left=0.2] [scale=0.8]{$\mu_j$};}
+\kaa{i}{j}{3}\treey{\cdlr{ol}\node at (oll)[above=0.2] [scale=0.8]{$3$};\node at (olr)[above=0.2] [scale=0.8]{$1$};\node at (or)[above=0.2] [scale=0.8]{$2$};
\node at  (ol)[left=0.2] [scale=0.8]{$\mu_i$};\node at  (o)[left=0.2] [scale=0.8]{$\mu_j$};}
\quad \Bigg| \, \mu_i,\mu_j\in M ,\kaa{i}{j}{1}, \kaa{i}{j}{2},\kaa{i}{j}{3}\in\bfk \Biggr\}\\
&\cong&\Biggr\{\sum
\kaa{i}{j}{1}\treey{\cdlr{ol}\node at (oll)[above=0.2] [scale=0.8]{$1$};\node at (olr)[above=0.2] [scale=0.8]{$2$};\node at (or)[above=0.2] [scale=0.8]{$3$};
\node at  (ol)[left=0.1cm] [scale=0.8]{$\mu_i^\omega$};\node at  (o)[left=0.1cm] [scale=0.8]{$\mu_j^\tau$};}
+\kaa{i}{j}{2}\treey{\cdlr{ol}\node at (oll)[above=0.2] [scale=0.8]{$2$};\node at (olr)[above=0.2] [scale=0.8]{$3$};\node at (or)[above=0.2] [scale=0.8]{$1$};
\node at  (ol)[left=0.1cm] [scale=0.8]{$\mu_i^\omega$};\node at  (o)[left=0.1cm] [scale=0.8]{$\mu_j^\tau$};}
+\kaa{i}{j}{3}\treey{\cdlr{ol}\node at (oll)[above=0.2] [scale=0.8]{$3$};\node at (olr)[above=0.2] [scale=0.8]{$1$};\node at (or)[above=0.2] [scale=0.8]{$2$};
\node at  (ol)[left=0.1cm] [scale=0.8]{$\mu_i^\omega$};\node at  (o)[left=0.1cm] [scale=0.8]{$\mu_j^\tau$};}
+
\kaa{i}{j}{1}\treey{\cdlr{ol}\node at (oll)[above=0.2] [scale=0.8]{$1$};\node at (olr)[above=0.2] [scale=0.8]{$2$};\node at (or)[above=0.2] [scale=0.8]{$3$};
\node at  (ol)[left=0.1cm] [scale=0.8]{$\mu_i^\tau$};\node at  (o)[left=0.1cm] [scale=0.8]{$\mu_j^\omega$};}
+\kaa{i}{j}{2}\treey{\cdlr{ol}\node at (oll)[above=0.2] [scale=0.8]{$2$};\node at (olr)[above=0.2] [scale=0.8]{$3$};\node at (or)[above=0.2] [scale=0.8]{$1$};
\node at  (ol)[left=0.1cm] [scale=0.8]{$\mu_i^\tau$};\node at  (o)[left=0.1cm] [scale=0.8]{$\mu_j^\omega$};}
+\kaa{i}{j}{3}\treey{\cdlr{ol}\node at (oll)[above=0.2] [scale=0.8]{$3$};\node at (olr)[above=0.2] [scale=0.8]{$1$};\node at (or)[above=0.2] [scale=0.8]{$2$};
\node at  (ol)[left=0.1cm] [scale=0.8]{$\mu_i^\tau$};\node at  (o)[left=0.1cm] [scale=0.8]{$\mu_j^\omega$};},\\
&&
\sum
\kaa{i}{j}{1}\treey{\cdlr{ol}\node at (oll)[above=0.2] [scale=0.8]{$1$};\node at (olr)[above=0.2] [scale=0.8]{$2$};\node at (or)[above=0.2] [scale=0.8]{$3$};
\node at  (ol)[left=0.1cm] [scale=0.8]{$\mu_i^\omega$};\node at  (o)[left=0.1cm] [scale=0.8]{$\mu_j^\omega$};}
+\kaa{i}{j}{2}\treey{\cdlr{ol}\node at (oll)[above=0.2] [scale=0.8]{$2$};\node at (olr)[above=0.2] [scale=0.8]{$3$};\node at (or)[above=0.2] [scale=0.8]{$1$};
\node at  (ol)[left=0.1cm] [scale=0.8]{$\mu_i^\omega$};\node at  (o)[left=0.1cm] [scale=0.8]{$\mu_j^\omega$};}
+\kaa{i}{j}{3}\treey{\cdlr{ol}\node at (oll)[above=0.2] [scale=0.8]{$3$};\node at (olr)[above=0.2] [scale=0.8]{$1$};\node at (or)[above=0.2] [scale=0.8]{$2$};
\node at  (ol)[left=0.1cm] [scale=0.8]{$\mu_i^\omega$};\node at  (o)[left=0.1cm] [scale=0.8]{$\mu_j^\omega$};}
\quad \Bigg| \, \mu_i^\omega,\mu_j^\tau\in \bigoplus_{\omega\in\Omega}M_\omega, \kaa{i}{j}{1}, \kaa{i}{j}{2},\kaa{i}{j}{3}\in\bfk \Biggr\}\\
&=& \lrr.
\end{eqnarray*}
}
Therefore
\vsb
$$ \lin{\lie}_\Omega\bigdot\spp= \stt\bigg(\big(\bigoplus_{\omega\in\Omega}\lieg_{,\omega}\big)\otimes M\bigg)\bigg/\big\langle \lrrs{\lier}~\bigdot ~R\big\rangle\cong
\stt\big(\bigoplus_{\omega\in\Omega}M_\omega\big)\Big/\big\langle \lrr\big\rangle=\lin{\spp}_\Omega.
$$
This completes the proof.
\end{proof}
\vsb

\begin{proposition}\mlabel{prop:maninbll}
Let $\Omega$ be a nonempty finite set. Let $\spp=\mscr{T}(M)/\langle R\rangle$ be a finitely generated binary quadratic operad.
Then we have
$$\lmat{\lie}_\Omega\bigdot\spp\cong\lmat{\spp}_\Omega\cong\lmat{\com}_\Omega \bigcir \spp. $$
\end{proposition}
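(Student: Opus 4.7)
The plan is to follow closely the template of Proposition~\ref{prop:maninbl}, replacing the linear compatibility by the leveled matching compatibility, and then to recover the second isomorphism from the first by invoking Koszul duality.

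First, I would explicitly compute the leveled matching compatibility $\lmrr_{\mathrm{Lie}}$ of the Jacobi relation $\lier$ in the spirit of Definition~\ref{de:mat}. Since $\lier$ is quadratic with three summands and weight two, for each $\fc\in\mwc{2}$ the leveled condition (with respect to the preorder of Eq.~\eqref{eq:order}) forces a single compatible pattern of $\Omega$-colorings across the three Jacobi terms. This yields a concrete presentation $\lmat{\lie}_\Omega=\stt\bigl(\bigoplus_{\omega\in\Omega}\lieg_{,\omega}\bigr)/\langle\lmrr_{\mathrm{Lie}}\rangle$ analogous to the presentation of $\lin{\lie}_\Omega$ used in the proof of Proposition~\ref{prop:maninbl}, but with fewer (namely only the leveled) relations.

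Second, I would compute the Manin black product $\lmrr_{\mathrm{Lie}}\bigdot R$ via Definition~\ref{de:mp}(a) and Eq.~\eqref{eq:mb}, presenting $M$ and $R$ as in Eqs.~\eqref{eq:m2} and~\eqref{eq:relar}. Under the identification $\bigl(\bigoplus_{\omega\in\Omega}\lieg_{,\omega}\bigr)\otimes M\cong\bigoplus_{\omega\in\Omega}M_\omega$, $\omega\otimes\mu_i\mapsto\mu_i^\omega$, already used in Proposition~\ref{prop:maninbl}, the coordinatewise multiplication of structure constants in the black product transplants the Jacobi-side leveled coloring pattern onto the $R$-side, so that $\lmrr_{\mathrm{Lie}}\bigdot R$ spans exactly the leveled matching relations $\lmrr$ of $R$. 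This establishes the first isomorphism $\lmat{\lie}_\Omega\bigdot\spp\cong\lmat{\spp}_\Omega$.

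For the second isomorphism, I would apply this first isomorphism to the Koszul dual operad $\spp^!$ to obtain $\lmat{\spp^!}_\Omega\cong\lmat{\lie}_\Omega\bigdot\spp^!$, then take Koszul duals of both sides. Using $(\spp_1\bigdot\spp_2)^!=\spp_1^!\bigcir\spp_2^!$, $\lie^!=\com$, and the leveled matching self-duality $\bigl(\lmat{\spp^!}_\Omega\bigr)^!\cong\lmat{\spp}_\Omega$ from Theorem~\ref{thm:mdul}(b), I would deduce $\lmat{\com}_\Omega\bigcir\spp\cong\lmat{\spp}_\Omega$. The main obstacle will be in the second step: verifying carefully that the coordinatewise product of coefficients in $\lmrr_{\mathrm{Lie}}\bigdot R$ reproduces \emph{exactly} the leveled matching relations $\lmrr$ of $R$, losing no relation and introducing no spurious one. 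This is a combinatorial bookkeeping exercise in which, for each $\fc\in\mwc{2}$ and each admissible permutation datum governing the foliation, one must match the Jacobi-side leveled colorings with the tree-monomial colorings of each element of $R$. The argument is in spirit the same as, but more delicate than, the corresponding step in Proposition~\ref{prop:maninbl}, since the leveled condition imposes nontrivial constraints relating distinct summands of a single relation.
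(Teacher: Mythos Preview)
Your proposal is correct and follows essentially the same route as the paper: explicitly compute $(\lier)_{\rm LMT}$, form the black product with $R$ via Definition~\ref{de:mp}\ref{it:mbp}, identify the result with $R_{\rm LMT}$ under $\omega\otimes\mu_i\mapsto\mu_i^\omega$, and then deduce the white-product statement by Koszul duality using $(\spp\bigdot\sqq)^!=\spp^!\bigcir\sqq^!$, $\lie^!=\com$, and Theorem~\ref{thm:mdul}\ref{item:lmklm}. One small inaccuracy: the leveled matching presentation has \emph{more} relations than the linear one (cf.\ Proposition~\ref{prop:matlin0}), not fewer; but this does not affect your argument, and the bookkeeping you flag as the main obstacle is in fact quite direct once the leveled relations of $\lie$ are written down.
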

\vsc
\begin{proof}
By a direct calculation, we obtain
$$(\lier)_{\rm LMT}= \Biggr\{\treey{\cdlr{ol}\node at (oll)[above=0.2] [scale=0.8]{$1$};\node at (olr)[above=0.2] [scale=0.8]{$2$};\node at (or)[above=0.2] [scale=0.8]{$3$};
\node at  (ol)[left=0.2] [scale=0.8]{$\omega$};\node at  (o)[left=0.2] [scale=0.8]{$\omega$};}
+\treey{\cdlr{ol}\node at (oll)[above=0.2] [scale=0.8]{$2$};\node at (olr)[above=0.2] [scale=0.8]{$3$};\node at (or)[above=0.2] [scale=0.8]{$1$};
\node at  (ol)[left=0.2] [scale=0.8]{$\omega$};\node at  (o)[left=0.2] [scale=0.8]{$\omega$};}
+\treey{\cdlr{ol}\node at (oll)[above=0.2] [scale=0.8]{$3$};\node at (olr)[above=0.2] [scale=0.8]{$1$};\node at (or)[above=0.2] [scale=0.8]{$2$};
\node at  (ol)[left=0.2] [scale=0.8]{$\omega$};\node at  (o)[left=0.2] [scale=0.8]{$\omega$};}
,\quad
\treey{\cdlr{ol}\node at (oll)[above=0.2] [scale=0.8]{$1$};\node at (olr)[above=0.2] [scale=0.8]{$2$};\node at (or)[above=0.2] [scale=0.8]{$3$};
\node at  (ol)[left=0.2] [scale=0.8]{$\omega$};\node at  (o)[left=0.2] [scale=0.8]{$\tau$};}
+\treey{\cdlr{ol}\node at (oll)[above=0.2] [scale=0.8]{$2$};\node at (olr)[above=0.2] [scale=0.8]{$3$};\node at (or)[above=0.2] [scale=0.8]{$1$};
\node at  (ol)[left=0.2] [scale=0.8]{$\omega$};\node at  (o)[left=0.2] [scale=0.8]{$\tau$};}
+\treey{\cdlr{ol}\node at (oll)[above=0.2] [scale=0.8]{$3$};\node at (olr)[above=0.2] [scale=0.8]{$1$};\node at (or)[above=0.2] [scale=0.8]{$2$};
\node at  (ol)[left=0.2] [scale=0.8]{$\omega$};\node at  (o)[left=0.2] [scale=0.8]{$\tau$};}
\quad\Bigg|\,\omega,\tau\in\Omega  \Biggr\}.$$
With the notations in Eqs.~\meqref{eq:m2} and~\meqref{eq:relar},
it follows from Definition~\mref{de:mp}~\mref{it:mbp} that
$$\lmat{\lie}_\Omega\bigdot\spp=\stt\bigg(\big(\bigoplus_{\omega\in\Omega}\lieg_{,\omega}\big)\otimes M\bigg)\Big/\Big\langle (\lier)_{\rm LMT}~\bigdot ~R\Big\rangle,
\vsb
$$
where
\vsb
$$\Big(\bigoplus_{\omega\in\Omega}\lieg_{,\omega}\Big)\otimes M
=\Big(\bigoplus_{\omega\in\Omega}\treey{\cdlr{o}\node at  (0,0.25) [scale=0.8]{$\omega$};}\Big)\otimes
\Big(\bigoplus_{1\leq i \leq s}\treey{\cdlr{o}\node at  (0,0.25) [scale=0.8]{$\mu_i$};}\Big)
=\bigoplus_{\omega\in\Omega, 1\leq i \leq s}\treey{\cdlr{o}\node at  (o)[left=0.8] [scale=0.8]{$\omega\otimes \mu_i$};}
\cong \bigoplus_{\omega\in\Omega, 1\leq i \leq s}\treey{\cdlr{o}\node at  (0,0.25) [scale=0.8]{$ \mu_i^\omega$};}
=\bigoplus_{\omega\in\Omega}M_\omega,
\vsb
$$
and
\vsc
\small{
\begin{eqnarray*}
(\lier)_{\rm LMT}~\bigdot~ R
&=& \Biggr\{\treey{\cdlr{ol}\node at (oll)[above=0.2] [scale=0.8]{$1$};\node at (olr)[above=0.2] [scale=0.8]{$2$};\node at (or)[above=0.2] [scale=0.8]{$3$};
\node at  (ol)[left=0.2] [scale=0.8]{$\omega$};\node at  (o)[left=0.2] [scale=0.8]{$\tau$};}
+\treey{\cdlr{ol}\node at (oll)[above=0.2] [scale=0.8]{$2$};\node at (olr)[above=0.2] [scale=0.8]{$3$};\node at (or)[above=0.2] [scale=0.8]{$1$};
\node at  (ol)[left=0.2] [scale=0.8]{$\omega$};\node at  (o)[left=0.2] [scale=0.8]{$\tau$};}
+\treey{\cdlr{ol}\node at (oll)[above=0.2] [scale=0.8]{$3$};\node at (olr)[above=0.2] [scale=0.8]{$1$};\node at (or)[above=0.2] [scale=0.8]{$2$};
\node at  (ol)[left=0.2] [scale=0.8]{$\omega$};\node at  (o)[left=0.2] [scale=0.8]{$\tau$};}
,\quad
\treey{\cdlr{ol}\node at (oll)[above=0.2] [scale=0.8]{$1$};\node at (olr)[above=0.2] [scale=0.8]{$2$};\node at (or)[above=0.2] [scale=0.8]{$3$};
\node at  (ol)[left=0.2] [scale=0.8]{$\omega$};\node at  (o)[left=0.2] [scale=0.8]{$\omega$};}
+\treey{\cdlr{ol}\node at (oll)[above=0.2] [scale=0.8]{$2$};\node at (olr)[above=0.2] [scale=0.8]{$3$};\node at (or)[above=0.2] [scale=0.8]{$1$};
\node at  (ol)[left=0.2] [scale=0.8]{$\omega$};\node at  (o)[left=0.2] [scale=0.8]{$\omega$};}
+\treey{\cdlr{ol}\node at (oll)[above=0.2] [scale=0.8]{$3$};\node at (olr)[above=0.2] [scale=0.8]{$1$};\node at (or)[above=0.2] [scale=0.8]{$2$};
\node at  (ol)[left=0.2] [scale=0.8]{$\omega$};\node at  (o)[left=0.2] [scale=0.8]{$\omega$};}
\quad\Bigg|\,\omega,\tau\in\Omega  \Biggr\}\\
&&
\bigdot~ \Biggr\{\sum
\kaa{i}{j}{1}\treey{\cdlr{ol}\node at (oll)[above=0.2] [scale=0.8]{$1$};\node at (olr)[above=0.2] [scale=0.8]{$2$};\node at (or)[above=0.2] [scale=0.8]{$3$};
\node at  (ol)[left=0.2] [scale=0.8]{$\mu_i$};\node at  (o)[left=0.2] [scale=0.8]{$\mu_j$};}
+\kaa{i}{j}{2}\treey{\cdlr{ol}\node at (oll)[above=0.2] [scale=0.8]{$2$};\node at (olr)[above=0.2] [scale=0.8]{$3$};\node at (or)[above=0.2] [scale=0.8]{$1$};
\node at  (ol)[left=0.2] [scale=0.8]{$\mu_i$};\node at  (o)[left=0.2] [scale=0.8]{$\mu_j$};}
+\kaa{i}{j}{3}\treey{\cdlr{ol}\node at (oll)[above=0.2] [scale=0.8]{$3$};\node at (olr)[above=0.2] [scale=0.8]{$1$};\node at (or)[above=0.2] [scale=0.8]{$2$};
\node at  (ol)[left=0.2] [scale=0.8]{$\mu_i$};\node at  (o)[left=0.2] [scale=0.8]{$\mu_j$};}
\quad \Bigg| \, \mu_i,\mu_j\in M ,\kaa{i}{j}{1}, \kaa{i}{j}{2},\kaa{i}{j}{3}\in\bfk  \Biggr\}\\
&\cong& \Biggr\{\sum
\kaa{i}{j}{1}\treey{\cdlr{ol}\node at (oll)[above=0.2] [scale=0.8]{$1$};\node at (olr)[above=0.2] [scale=0.8]{$2$};\node at (or)[above=0.2] [scale=0.8]{$3$};
\node at  (ol)[left=0.1cm] [scale=0.8]{$\mu_i^\omega$};\node at  (o)[left=0.1cm] [scale=0.8]{$\mu_j^\tau$};}
+\kaa{i}{j}{2}\treey{\cdlr{ol}\node at (oll)[above=0.2] [scale=0.8]{$2$};\node at (olr)[above=0.2] [scale=0.8]{$3$};\node at (or)[above=0.2] [scale=0.8]{$1$};
\node at  (ol)[left=0.1cm] [scale=0.8]{$\mu_i^\omega$};\node at  (o)[left=0.1cm] [scale=0.8]{$\mu_j^\tau$};}
+\kaa{i}{j}{3}\treey{\cdlr{ol}\node at (oll)[above=0.2] [scale=0.8]{$3$};\node at (olr)[above=0.2] [scale=0.8]{$1$};\node at (or)[above=0.2] [scale=0.8]{$2$};
\node at  (ol)[left=0.1cm] [scale=0.8]{$\mu_i^\omega$};\node at  (o)[left=0.1cm] [scale=0.8]{$\mu_j^\tau$};},
\sum
\kaa{i}{j}{1}\treey{\cdlr{ol}\node at (oll)[above=0.2] [scale=0.8]{$1$};\node at (olr)[above=0.2] [scale=0.8]{$2$};\node at (or)[above=0.2] [scale=0.8]{$3$};
\node at  (ol)[left=0.1cm] [scale=0.8]{$\mu_i^\omega$};\node at  (o)[left=0.1cm] [scale=0.8]{$\mu_j^\omega$};}
+\kaa{i}{j}{2}\treey{\cdlr{ol}\node at (oll)[above=0.2] [scale=0.8]{$2$};\node at (olr)[above=0.2] [scale=0.8]{$3$};\node at (or)[above=0.2] [scale=0.8]{$1$};
\node at  (ol)[left=0.1cm] [scale=0.8]{$\mu_i^\omega$};\node at  (o)[left=0.1cm] [scale=0.8]{$\mu_j^\omega$};}
+\kaa{i}{j}{3}\treey{\cdlr{ol}\node at (oll)[above=0.2] [scale=0.8]{$3$};\node at (olr)[above=0.2] [scale=0.8]{$1$};\node at (or)[above=0.2] [scale=0.8]{$2$};
\node at  (ol)[left=0.1cm] [scale=0.8]{$\mu_i^\omega$};\node at  (o)[left=0.1cm] [scale=0.8]{$\mu_j^\omega$};}\\
&&\quad \Bigg| \, \mu_i^\omega,\mu_j^\tau\in \bigoplus_{\omega\in\Omega}M_\omega, \kaa{i}{j}{1}, \kaa{i}{j}{2},\kaa{i}{j}{3}\in\bfk  \Biggr\}\\
&=& R_{\rm LMT}.
\vsc
\end{eqnarray*}
}
Thus
\vsb
$$ \lmat{\lie}_\Omega\bigdot\spp= \stt\bigg(\big(\bigoplus_{\omega\in\Omega}\lieg_{,\omega}\big)\otimes M\bigg)\Big/\Big\langle (\lier)_{\rm LMT}~\bigdot ~R\Big\rangle\cong
\stt\big(\bigoplus_{\omega\in\Omega}M_\omega\big)\big/\big\langle  R_{\rm LMT}\big\rangle=\lmat{\spp}_\Omega.
\vsb
$$
It follows from $\lie^!=\com$ and Theorem~\mref{thm:mdul} that
$$ \lmat{\com}_{\Omega}  \bigcir \spp^!= \Big(\lmat{\lie}_{\Omega}\Big)^{!} \bigcir \spp^! =\Big(\lmat{\lie}_\Omega\bigdot\spp\Big)^! \cong \Big(\lmat{\spp}_\Omega\Big)^! = \lmat{\Big(\mscr{P}^!\Big)}_\Omega.
$$
Therefore $\lmat{\spp}_\Omega\cong\lmat{\com}_\Omega \bigcir \spp$.
\vsc
\end{proof}

\section{Koszulness of binary quadratic compatible structures}
\mlabel{sec:koszul}
This section studies the preservation of Koszulness under the process of taking a compatible construction. 

We first show that having a Gr\"obner basis~\cite{DK2} is preserved by taking the leveled matching compatible construction. 

\begin{theorem}\mlabel{thm:wak}
Let $\spp$ be a binary quadratic operad defined by a set of quadratic relations that is a Gr\"{o}bner basis.
Then the induced set of leveled matching relations is also a Gr\"{o}bner basis for the operad $\lmat{\spp}_\Omega$. Moreover, $\lmat{\spp}_\Omega\cong\spp\bigcir\lmat{\com}_\Omega$ is Koszul.
\end{theorem}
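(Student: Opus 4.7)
The plan is to transfer the Gr\"obner basis from $\spp$ to $\lmat{\spp}_\Omega$ by exploiting the rigidity of the leveling condition, and then to conclude Koszulness from the standard fact that a quadratic Gr\"obner basis implies Koszulness, together with the isomorphism provided by Proposition~\ref{prop:maninbll}.

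First I would fix an admissible monomial order $\preceq$ on $\mscr{T}(M)$ witnessing that the given quadratic relations $R$ of $\spp$ form a Gr\"obner basis, and extend it to an admissible order $\widetilde{\preceq}$ on $\mscr{T}\Big(\bigoplus_{\omega\in\Omega} M_\omega\Big)$ by fixing an arbitrary total order on $\Omega$ and breaking ties between two colored trees $(t,D)$ and $(t,D')$ with the same underlying decorated tree $(t,d)$ by the lexicographic order on their color sequences read in the fixed preorder traversal of $V(t)$. This gives a well-defined admissible order that refines $\preceq$ via the projection $\pi_X$.

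Next I would analyze the shape of a leveled matching relation. For a relation $r(X)=\sum_{0\le i\le s}\alpha_i(t_i,d_i)\in R$, the leveling condition with respect to the preorder traversal means that every term in a leveled matching relation shares the same tuple of colors $(\omega_{j})_{1\le j\le m}$ at the fixed positions $v_{i,j}$, only the underlying $(t_i,d_i)$ varying. Consequently, under $\widetilde{\preceq}$, the leading term of such a leveled matching relation is exactly the leading term $(t_{i_0},d_{i_0})$ of $r(X)$ equipped with the same color tuple $(\omega_j)$. Thus the set $\lmr$ of leveled matching relations has leading monomials precisely the colored versions, by all tuples $(\omega_j)\in\Omega^m$, of the leading monomials of $R$. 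The key step is then to check that every S-polynomial among leveled matching relations reduces to zero. The point is that an overlap of two colored leveled leading monomials forces a consistent coloring of the overlap tree, so the corresponding S-polynomial is the colored lift of an S-polynomial in $R$; reducing it by the original Gr\"obner basis can be mimicked step by step at the colored level using the leveled matching relations, because each reduction step preserves the colors at the positions dictated by the leveling. This yields the first assertion.

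Having a quadratic Gr\"obner basis implies Koszulness by the standard criterion of Dotsenko--Khoroshkin (see the corresponding result in \cite{LV}), so $\lmat{\spp}_\Omega$ is Koszul. The isomorphism $\lmat{\spp}_\Omega\cong \spp\bigcir\lmat{\com}_\Omega$ is the content of Proposition~\ref{prop:maninbll} (using the $!$-duality ${\lie}^!=\com$ together with the left-hand isomorphism $\lmat{\lie}_\Omega\bigdot\spp\cong \lmat{\spp}_\Omega$ proved there, and the identity $(\spp\bigdot\sqq)^!=\spp^!\bigcir\sqq^!$). I expect the main technical obstacle to be the verification that the colored S-polynomial reductions can be performed entirely within the leveled matching relations; the leveling hypothesis is exactly what forces the colors to propagate consistently through each reduction, and without it one only gets a general matching operad for which the colored S-polynomial might not admit a leveled reduction path.
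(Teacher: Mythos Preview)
Your proposal is correct and follows essentially the same approach as the paper: extend the monomial order from $\mscr{T}(M)$ to $\mscr{T}\big(\bigoplus_{\omega\in\Omega}M_\omega\big)$, observe that the leading term of each leveled matching relation is the $\Omega$-coloring of the leading term of the underlying relation, and use the leveling condition to argue that overlaps and the corresponding $S$-polynomial reductions are nothing but the $\Omega$-labelled lifts of those for $R$. The paper states the last step more tersely (``the overlaps and compositions of leveled matching relations can be obtained by labeling the vertices of the overlaps and compositions of the original relations''), whereas you spell out why the leveling forces a consistent coloring through each reduction; both conclude Koszulness via the quadratic Gr\"obner basis criterion and invoke Proposition~\ref{prop:maninbll} for the Manin white product identification.
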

\begin{proof}
A quadratic relation of $\spp$ is of the form
$$
r=\sum\kappa_{i,j}^{1}\treey{\cdlr{ol}\node at (oll)[above=0.2] [scale=0.8]{$1$};\node at (olr)[above=0.2] [scale=0.8]{$2$};\node at (or)[above=0.2] [scale=0.8]{$3$};
\node at  (ol)[left=0.2] [scale=0.8]{$\mu_i$};\node at  (o)[left=0.2] [scale=0.8]{$\mu_j$};}
+\kappa_{i,j}^{2}\treey{\cdlr{or}\node at (ol)[above=0.2] [scale=0.8]{$1$};\node at (orr)[above=0.2] [scale=0.8]{$3$};\node at (orl)[above=0.2] [scale=0.8]{$2$};
\node at  (or)[left=0.2] [scale=0.8]{$\mu_i$};\node at  (o)[left=0.2] [scale=0.8]{$\mu_j$};}
+\kappa_{i,j}^{3}\treey{\cdlr{ol}\node at (oll)[above=0.2] [scale=0.8]{$1$};\node at (olr)[above=0.2] [scale=0.8]{$3$};\node at (or)[above=0.2] [scale=0.8]{$2$};
\node at  (ol)[left=0.2] [scale=0.8]{$\mu_i$};\node at  (o)[left=0.2] [scale=0.8]{$\mu_j$};}
\quad  \text{ for some }\, \mu_i,\mu_j\in \spp(2) ,\kappa_{i,j}^{1}, \kappa_{i,j}^{2},\kappa_{i,j}^{3}\in\bfk.$$
Then its induced leveled matching relation can be obtained by labeling the vertices of $r$ with $\alpha,\beta\in\Omega$:
$$
r^{{\rm MT}}=\sum\kappa_{i,j}^{1}\treey{\cdlr{ol}\node at (oll)[above=0.2] [scale=0.8]{$1$};\node at (olr)[above=0.2] [scale=0.8]{$2$};\node at (or)[above=0.2] [scale=0.8]{$3$};
\node at  (ol)[left=0.2] [scale=0.8]{$\mu_{i,\alpha}$};\node at  (o)[left=0.2] [scale=0.8]{$\mu_{j,\beta}$};}
+\kappa_{i,j}^{2}\treey{\cdlr{or}\node at (ol)[above=0.2] [scale=0.8]{$1$};\node at (orr)[above=0.2] [scale=0.8]{$3$};\node at (orl)[above=0.2] [scale=0.8]{$2$};
\node at  (or)[left=0.2] [scale=0.8]{$\mu_{i,\alpha}$};\node at  (o)[left=0.2] [scale=0.8]{$\mu_{j,\beta}$};}
+\kappa_{i,j}^{3}\treey{\cdlr{ol}\node at (oll)[above=0.2] [scale=0.8]{$1$};\node at (olr)[above=0.2] [scale=0.8]{$3$};\node at (or)[above=0.2] [scale=0.8]{$2$};
\node at  (ol)[left=0.2] [scale=0.8]{$\mu_{i,\alpha}$};\node at  (o)[left=0.2] [scale=0.8]{$\mu_{j,\beta}$};},
\quad \, \mu_i,\mu_j\in \spp(2) ,\kappa_{i,j}^{1}, \kappa_{i,j}^{2},\kappa_{i,j}^{3}\in\bfk.$$
It is straightforward to see that the monomial ordering on $\spp(2)$-decorated trees can be extended to $\spp(2)_\Omega$-decorated trees such that
the leading term of $r^{{\rm MT}}$ can be obtained by labeling the vertices of leading monomial of $r$ by $\alpha,\beta$. For example, if
$\bar{r}=\treey{\cdlr{ol}\node at (oll)[above=0.2] [scale=0.8]{$1$};\node at (olr)[above=0.2] [scale=0.8]{$2$};\node at (or)[above=0.2] [scale=0.8]{$3$};
\node at  (ol)[left=0.2] [scale=0.8]{$\mu_i$};\node at  (o)[left=0.2] [scale=0.8]{$\mu_j$};}$, then
$\overline{{r^{{\rm MT}}}}=\treey{\cdlr{ol}\node at (oll)[above=0.2] [scale=0.8]{$1$};\node at (olr)[above=0.2] [scale=0.8]{$2$};\node at (or)[above=0.2] [scale=0.8]{$3$};
\node at  (ol)[left=0.2] [scale=0.8]{$\mu_{i,\alpha}$};\node at  (o)[left=0.2] [scale=0.8]{$\mu_{j,\beta}$};}$.
By the leveled condition, every level of vertices of the support trees is labeled  by an element of $\Omega$. Thus the overlaps and compositions of leveled matching relations can be obtained by labeling the vertices of the overlaps and compositions of the original relations. Therefore, if the original set of relations is a Gr\"obner basis, then the set of the leveled matching relation is also a Gr\"{o}bner basis.

Further by \cite[Theorem 8.3.1]{LV}, the operads $\lmat{\spp}_\Omega\cong\spp\bigcir\lmat{\com}_\Omega$ is Koszul.
\end{proof}

\delete{
\begin{remark}
The above result does not hold for non-leveled matching opeards. The leveled condition guarantees that the overlaps and compositions of the leveled matching relation
can be obtained by labelling the vertices of the overlaps and compositions of the original relation.
\end{remark}
}

To see more precisely how the general process in the proof works, we give a detailed example. 
\begin{example}\label{ex:st}
Let $\Omega$ be a finite ordered nonempty set. Then operad  $\lmat{\com}_\Omega$ has a Gr\"{o}bner basis.

Write the ordered set $\Omega$ as $\{\omega_1<\omega_2<\cdots< \omega_n\}$, where $|\Omega|=n$.
We use the path-lexicographic order $\leq_{\rm pl}$ on binary shuffle trees~\cite[Section. 8.4.1]{LV}.
Restricting this order to all $2$-vertices binary shuffle trees gives
\begin{equation} \notag
\resizebox{\textwidth}{!}{$
\begin{split}
&\treey{\cdlr{or}\node at (ol)[above=0.2] [scale=0.8]{$1$};\node at (orl)[above=0.2] [scale=0.8]{$2$};\node at (orr)[above=0.2] [scale=0.8]{$3$};
\node at  (or)[left=0.2] [scale=0.8]{$\omega_1$};\node at  (o)[left=0.2] [scale=0.8]{$\omega_1$};}
<_{\rm pl}
\treey{\cdlr{or}\node at (ol)[above=0.2] [scale=0.8]{$1$};\node at (orl)[above=0.2] [scale=0.8]{$2$};\node at (orr)[above=0.2] [scale=0.8]{$3$};
\node at  (or)[left=0.2] [scale=0.8]{$\omega_2$};\node at  (o)[left=0.2] [scale=0.8]{$\omega_1$};}
<_{\rm pl}\cdots
<_{\rm pl}
\treey{\cdlr{or}\node at (ol)[above=0.2] [scale=0.8]{$1$};\node at (orl)[above=0.2] [scale=0.8]{$2$};\node at (orr)[above=0.2] [scale=0.8]{$3$};
\node at  (or)[left=0.2] [scale=0.8]{$\omega_n$};\node at  (o)[left=0.2] [scale=0.8]{$\omega_1$};}
<_{\rm pl}
\treey{\cdlr{or}\node at (ol)[above=0.2] [scale=0.8]{$1$};\node at (orl)[above=0.2] [scale=0.8]{$2$};\node at (orr)[above=0.2] [scale=0.8]{$3$};
\node at  (or)[left=0.2] [scale=0.8]{$\omega_1$};\node at  (o)[left=0.2] [scale=0.8]{$\omega_2$};}
<_{\rm pl}
\treey{\cdlr{or}\node at (ol)[above=0.2] [scale=0.8]{$1$};\node at (orl)[above=0.2] [scale=0.8]{$2$};\node at (orr)[above=0.2] [scale=0.8]{$3$};
\node at  (or)[left=0.2] [scale=0.8]{$\omega_2$};\node at  (o)[left=0.2] [scale=0.8]{$\omega_2$};}
<_{\rm pl}\cdots
<_{\rm pl}
\treey{\cdlr{or}\node at (ol)[above=0.2] [scale=0.8]{$1$};\node at (orl)[above=0.2] [scale=0.8]{$2$};\node at (orr)[above=0.2] [scale=0.8]{$3$};
\node at  (or)[left=0.2] [scale=0.8]{$\omega_n$};\node at  (o)[left=0.2] [scale=0.8]{$\omega_2$};}
<_{\rm pl}\cdots
<_{\rm pl}\treey{\cdlr{or}\node at (ol)[above=0.2] [scale=0.8]{$1$};\node at (orl)[above=0.2] [scale=0.8]{$2$};\node at (orr)[above=0.2] [scale=0.8]{$3$};
\node at  (or)[left=0.2] [scale=0.8]{$\omega_n$};\node at  (o)[left=0.2] [scale=0.8]{$\omega_n$};}\\
%%%%%%%%%%%%%%%%%
<_{\rm pl}
&\treey{\cdlr{ol}\node at (oll)[above=0.2] [scale=0.8]{$1$};\node at (olr)[above=0.2] [scale=0.8]{$3$};\node at (or)[above=0.2] [scale=0.8]{$2$};
\node at  (ol)[left=0.2] [scale=0.8]{$\omega_1$};\node at  (o)[left=0.2] [scale=0.8]{$\omega_1$};}
<_{\rm pl}
\treey{\cdlr{ol}\node at (oll)[above=0.2] [scale=0.8]{$1$};\node at (olr)[above=0.2] [scale=0.8]{$2$};\node at (or)[above=0.2] [scale=0.8]{$3$};
\node at  (ol)[left=0.2] [scale=0.8]{$\omega_1$};\node at  (o)[left=0.2] [scale=0.8]{$\omega_1$};}
<_{\rm pl}
\treey{\cdlr{ol}\node at (oll)[above=0.2] [scale=0.8]{$1$};\node at (olr)[above=0.2] [scale=0.8]{$3$};\node at (or)[above=0.2] [scale=0.8]{$2$};
\node at  (ol)[left=0.2] [scale=0.8]{$\omega_2$};\node at  (o)[left=0.2] [scale=0.8]{$\omega_1$};}
<_{\rm pl}
\treey{\cdlr{ol}\node at (oll)[above=0.2] [scale=0.8]{$1$};\node at (olr)[above=0.2] [scale=0.8]{$2$};\node at (or)[above=0.2] [scale=0.8]{$3$};
\node at  (ol)[left=0.2] [scale=0.8]{$\omega_2$};\node at  (o)[left=0.2] [scale=0.8]{$\omega_1$};}
<_{\rm pl}\cdots
<_{\rm pl}\treey{\cdlr{ol}\node at (oll)[above=0.2] [scale=0.8]{$1$};\node at (olr)[above=0.2] [scale=0.8]{$3$};\node at (or)[above=0.2] [scale=0.8]{$2$};
\node at  (ol)[left=0.2] [scale=0.8]{$\omega_n$};\node at  (o)[left=0.2] [scale=0.8]{$\omega_{n-1}$};}
<_{\rm pl}
\treey{\cdlr{ol}\node at (oll)[above=0.2] [scale=0.8]{$1$};\node at (olr)[above=0.2] [scale=0.8]{$2$};\node at (or)[above=0.2] [scale=0.8]{$3$};
\node at  (ol)[left=0.2] [scale=0.8]{$\omega_n$};\node at  (o)[left=0.2] [scale=0.8]{$\omega_{n-1}$};}
<_{\rm pl}
\treey{\cdlr{ol}\node at (oll)[above=0.2] [scale=0.8]{$1$};\node at (olr)[above=0.2] [scale=0.8]{$3$};\node at (or)[above=0.2] [scale=0.8]{$2$};
\node at  (ol)[left=0.2] [scale=0.8]{$\omega_n$};\node at  (o)[left=0.2] [scale=0.8]{$\omega_n$};}
<_{\rm pl}
\treey{\cdlr{ol}\node at (oll)[above=0.2] [scale=0.8]{$1$};\node at (olr)[above=0.2] [scale=0.8]{$2$};\node at (or)[above=0.2] [scale=0.8]{$3$};
\node at  (ol)[left=0.2] [scale=0.8]{$\omega_n$};\node at  (o)[left=0.2] [scale=0.8]{$\omega_n$};}.
\end{split}
$}
\end{equation}
\nc\rtcml{\rightarrow_{\rm LMT}}\nc\ltcml{\leftarrow_{\rm LMT}}

The rewriting rules of the operad $\lmat{\com}_\Omega$ are
\begin{align*}
\treey{\cdlr{ol}\node at (oll)[above=0.2] [scale=0.8]{$1$};\node at (olr)[above=0.2] [scale=0.8]{$2$};\node at (or)[above=0.2] [scale=0.8]{$3$};
\node at  (ol)[left=0.2] [scale=0.8]{$\omega_i$};\node at  (o)[left=0.2] [scale=0.8]{$\omega_i$};}
\rtcml
\treey{\cdlr{or}\node at (ol)[above=0.2] [scale=0.8]{$1$};\node at (orl)[above=0.2] [scale=0.8]{$2$};\node at (orr)[above=0.2] [scale=0.8]{$3$};
\node at  (or)[left=0.2] [scale=0.8]{$\omega_i$};\node at  (o)[left=0.2] [scale=0.8]{$\omega_i$};},\quad
\treey{\cdlr{ol}\node at (oll)[above=0.2] [scale=0.8]{$1$};\node at (olr)[above=0.2] [scale=0.8]{$3$};\node at (or)[above=0.2] [scale=0.8]{$2$};
\node at  (ol)[left=0.2] [scale=0.8]{$\omega_i$};\node at  (o)[left=0.2] [scale=0.8]{$\omega_i$};}
\rtcml
\treey{\cdlr{or}\node at (ol)[above=0.2] [scale=0.8]{$1$};\node at (orl)[above=0.2] [scale=0.8]{$2$};\node at (orr)[above=0.2] [scale=0.8]{$3$};
\node at  (or)[left=0.2] [scale=0.8]{$\omega_i$};\node at  (o)[left=0.2] [scale=0.8]{$\omega_i$};},\quad
\treey{\cdlr{ol}\node at (oll)[above=0.2] [scale=0.8]{$1$};\node at (olr)[above=0.2] [scale=0.8]{$2$};\node at (or)[above=0.2] [scale=0.8]{$3$};
\node at  (ol)[left=0.2] [scale=0.8]{$\omega_j$};\node at  (o)[left=0.2] [scale=0.8]{$\omega_i$};}
\rtcml
\treey{\cdlr{or}\node at (ol)[above=0.2] [scale=0.8]{$1$};\node at (orl)[above=0.2] [scale=0.8]{$2$};\node at (orr)[above=0.2] [scale=0.8]{$3$};
\node at  (or)[left=0.2] [scale=0.8]{$\omega_j$};\node at  (o)[left=0.2] [scale=0.8]{$\omega_i$};},\quad
\treey{\cdlr{ol}\node at (oll)[above=0.2] [scale=0.8]{$1$};\node at (olr)[above=0.2] [scale=0.8]{$3$};\node at (or)[above=0.2] [scale=0.8]{$2$};
\node at  (ol)[left=0.2] [scale=0.8]{$\omega_j$};\node at  (o)[left=0.2] [scale=0.8]{$\omega_i$};}
\rtcml
\treey{\cdlr{or}\node at (ol)[above=0.2] [scale=0.8]{$1$};\node at (orl)[above=0.2] [scale=0.8]{$2$};\node at (orr)[above=0.2] [scale=0.8]{$3$};
\node at  (or)[left=0.2] [scale=0.8]{$\omega_j$};\node at  (o)[left=0.2] [scale=0.8]{$\omega_i$};}
\end{align*}
for $\omega_i, \omega_j\in\Omega$. There are two types of critical monomials
\begin{align*}
\treey{\cdlr{ol}\cdlr{oll}\node at (olll)[above=0.2] [scale=0.8]{$1$};\node at (ollr)[above=0.2] [scale=0.8]{$2$};\node at (olr)[above=0.2] [scale=0.8]{$3$};\node at  (or) [above=0.2] [scale=0.8]{$4$};
\node at  (o) [left=0.2] [scale=0.8]{$\mu$};\node at  (ol) [left=0.2] [scale=0.8]{$\nu$};\node at  (oll)[left=0.2] [scale=0.8]{$\tau$};},
\treey{\cdlr{ol}\cdlr{oll}\node at (olll)[above=0.2] [scale=0.8]{$1$};\node at (ollr)[above=0.2] [scale=0.8]{$2$};\node at (olr)[above=0.2] [scale=0.8]{$4$};\node at  (or) [above=0.2] [scale=0.8]{$3$};
\node at  (o) [left=0.2] [scale=0.8]{$\mu$};\node at  (ol) [left=0.2] [scale=0.8]{$\nu$};\node at  (oll)[left=0.2] [scale=0.8]{$\tau$};}
\,\text{ for }\, (\mu,\nu,\tau)\in \Omega.
\vsb
\end{align*}
The fact that these critical monomials are confluent follows from
\begin{align*}
\treey{\cdlr{or}\cdlr{orr}\node at (ol)[above=0.2] [scale=0.8]{$p$};\node at (orl)[above=0.2] [scale=0.8]{$q$};\node at (orrl)[above=0.2] [scale=0.8]{$s$};\node at  (orrr) [above=0.2] [scale=0.8]{$t$};
\node at  (o) [left=0.2] [scale=0.8]{$\mu$};\node at  (or) [left=0.2] [scale=0.8]{$\nu$};\node at  (orr)[left=0.2] [scale=0.8]{$\tau$};}
\ltcml
\treey{\cdlr{or}\cdlr{orl}\node at (ol)[above=0.2] [scale=0.8]{$p$};\node at (orll)[above=0.2] [scale=0.8]{$q$};\node at (orlr)[above=0.2] [scale=0.8]{$s$};\node at  (orr) [above=0.2] [scale=0.8]{$t$};
\node at  (o) [left=0.2] [scale=0.8]{$\mu$};\node at  (or) [left=0.2] [scale=0.8]{$\nu$};\node at  (orl)[left=0.2] [scale=0.8]{$\tau$};}
\ltcml
\treey{\cdlr{ol}\cdlr{olr}\node at (oll)[above=0.2] [scale=0.8]{$p$};\node at (olrl)[above=0.2] [scale=0.8]{$q$};\node at (olrr)[above=0.2] [scale=0.8]{$s$};\node at  (or) [above=0.2] [scale=0.8]{$t$};
\node at  (o) [left=0.2] [scale=0.8]{$\mu$};\node at  (ol) [left=0.2] [scale=0.8]{$\nu$};\node at  (olr)[left=0.2] [scale=0.8]{$\tau$};}
\ltcml
\treey{\cdlr{ol}\cdlr{oll}\node at (olll)[above=0.2] [scale=0.8]{$p$};\node at (ollr)[above=0.2] [scale=0.8]{$q$};\node at (olr)[above=0.2] [scale=0.8]{$s$};\node at  (or) [above=0.2] [scale=0.8]{$t$};
\node at  (o) [left=0.2] [scale=0.8]{$\mu$};\node at  (ol) [left=0.2] [scale=0.8]{$\nu$};\node at  (oll)[left=0.2] [scale=0.8]{$\tau$};}
\rtcml
\treey{\cdlr[1.2]{o}\cdlr{ol}\cdlr{or}\node at (oll)[above=0.2] [scale=0.8]{$p$};\node at (olr)[above=0.2] [scale=0.8]{$q$};\node at (orl)[above=0.2] [scale=0.8]{$s$};\node at  (orr) [above=0.2] [scale=0.8]{$t$};
\node at  (o) [left=0.2] [scale=0.8]{$\mu$};\node at  (ol) [left=0.2] [scale=0.8]{$\nu$};\node at  (or)[left=0.2] [scale=0.8]{$\tau$};}
\rtcml
\treey{\cdlr{or}\cdlr{orr}\node at (ol)[above=0.2] [scale=0.8]{$p$};\node at (orl)[above=0.2] [scale=0.8]{$q$};\node at (orrl)[above=0.2] [scale=0.8]{$s$};\node at  (orrr) [above=0.2] [scale=0.8]{$t$};
\node at  (o) [left=0.2] [scale=0.8]{$\mu$};\node at  (or) [left=0.2] [scale=0.8]{$\nu$};\node at  (orr)[left=0.2] [scale=0.8]{$\tau$};}
\end{align*}
for $(p,q,s,t)\in \{(1,2,3,4), (1,2,4,3)\}$. If we forget the labelling of elements of $\Omega$, then we obtain the result for the operad $\com$.

Therefore, by \cite[Theorem 8.3.1]{LV}, the operad $\lmat{\com}_\Omega$ is Koszul.
\end{example}

\begin{corollary}\label{coro:ko}
If $\mathscr{P}$ is the operad for any of the associative algebra, Lie algebra, commutative algebra, dendriform algebra, pre-Lie algebra, perm algebra, Leibniz algebra, Zinbiel algebra or Poisson algebra, then $\lmat{\mscr{P}}_\Omega$ is Koszul. 
\end{corollary}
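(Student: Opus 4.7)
My plan is to derive this corollary as a direct application of Theorem~\mref{thm:wak}. According to that theorem, it suffices to show that each of the listed operads admits a presentation by a set of quadratic relations that forms a Gr\"obner basis with respect to a suitable monomial ordering on shuffle trees; then the leveled matching construction automatically produces a Gr\"obner basis for $\lmat{\mscr{P}}_\Omega$, and Koszulness follows from the Gr\"obner basis criterion (\cite[Theorem 8.3.1]{LV}) that was invoked already in the proof of Theorem~\mref{thm:wak}.

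The verification of the Gr\"obner basis hypothesis for each of the listed operads is classical and well documented in the literature on operadic Gr\"obner bases, principally the work of Dotsenko--Khoroshkin and the book of Loday--Vallette. Concretely, I would cite:
\begin{enumerate}
\item The operads $\as$, $\com$, $\lie$, $\dend$, $\mscr{P}re\mscr{L}$, $\mscr{P}erm$, $\mscr{L}eib$ and $\mscr{Z}inb$ all admit quadratic Gr\"obner bases with respect to the path-lexicographic order on shuffle trees (see~\cite[\S 8.4]{LV} and the examples worked out by Dotsenko--Khoroshkin). For $\com$, this is exactly the calculation already carried out in Example~\mref{ex:st}.
\item The Poisson operad $\mscr{P}ois$ also admits a quadratic Gr\"obner basis; this is the content of the classical result that the Poisson operad is Koszul via its rewriting system relating the commutative product and the Lie bracket, which can be found in~\cite[\S 13.3]{LV} or in Dotsenko--Khoroshkin.
\end{enumerate}

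With these references in hand, the proof is essentially a one-line invocation: for each $\mscr{P}$ in the list, fix the known quadratic Gr\"obner basis presentation, apply Theorem~\mref{thm:wak} to conclude that the induced leveled matching relations form a Gr\"obner basis for $\lmat{\mscr{P}}_\Omega$, and hence that $\lmat{\mscr{P}}_\Omega$ is Koszul.

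The only mild subtlety, and the step I would be most careful about writing up, is making sure the compatible ordering used to extend each monomial ordering from $\mscr{P}(2)$-decorated trees to $\mscr{P}(2)_\Omega$-decorated trees (as in the proof of Theorem~\mref{thm:wak}) is genuinely available for operads whose generators are not concentrated in a single $\mathbb{S}$-module component, such as $\dend$ (two binary generators) and $\mscr{P}ois$ (a commutative product and a Lie bracket). In each case, one extends the path-lexicographic order by ordering the set of pairs (generator, label from $\Omega$) lexicographically, which is exactly the construction illustrated for $\com$ in Example~\mref{ex:st}. This extension is routine but is the one place where a careful reader might want to see the construction spelled out; I would include a brief remark or refer back to Example~\mref{ex:st} for the pattern.
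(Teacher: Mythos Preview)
Your proposal is correct and follows essentially the same approach as the paper: both apply Theorem~\mref{thm:wak} after noting that each of the listed operads has a quadratic Gr\"obner basis, citing the standard references (\cite{BreD,LV}). The paper's proof is just the two-line version of what you wrote, with the extra (logically unnecessary for this argument) remark that the listed operads are themselves Koszul by~\cite{Zi}.
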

%\hu{I guess the operads in the blank part of the above table are also Koszul.
% I'll keep trying to prove that all compatible structures preserve Koszul or Gr\"{o}bner basis  property.
%}

%\li{Should be conjectured that the other cases are also Koszul?} 

%We believe that the other cases are also Koszul. 

\begin{proof}
The list of operads are Koszul by~\mcite{Zi}.
Then the corresponding operads $\lmat{\mscr{P}}_\Omega$ are Koszul by Theorem~\ref{thm:wak} and the fact that each of the listed operads has a Gr\"{o}bner basis~\cite{BreD,LV}.
\end{proof}

For the rest of the section, fix a set $\Omega$ with two elements.  
Dotsenko~\cite[Corollary 3]{Dot1} and Strohmayer~\cite[Proposition 3.9]{St09} showed independently that for some restrictions on the operad $\spp$, the operads $\lin{\spp}_\Omega$ and  $\tot{\big(\spp^!\big)}_\Omega$ are Koszul. 
The operad $\com$ satisfies the assumption in~\cite{Dot1}. Thus we have the following result.

\begin{proposition}\label{prop:cl}
	The operads $\tot{\lie}_\Omega$ and $\lin{\com}_\Omega$ are Koszul.
\end{proposition}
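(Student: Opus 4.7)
The plan is to apply Dotsenko's Koszulness criterion from \mcite{Dot1} to $\com$ and then transfer the conclusion to $\lie$ via Koszul duality. I would proceed in two stages, one for each operad in the statement.

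First, for $\lin{\com}_\Omega$. The text immediately preceding the proposition asserts that $\com$ satisfies the hypotheses of the criterion in~\mcite{Dot1}, so by that criterion $\lin{\com}_\Omega$ is Koszul. This gives the second half of the claim directly. No additional work is needed here beyond verifying (or simply citing) that the commutative operad lies in the class to which Dotsenko's result applies; since this verification is the content of the ``Thus'' at the end of the preceding sentence, I would merely invoke it.

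Second, for $\tot{\lie}_\Omega$. Here the idea is to exhibit it as the Koszul dual of an already-known Koszul operad. By Theorem~\mref{thm:dul} applied to $\spp=\com$, together with $\com^!=\lie$, one has
\begin{equation*}
\big(\lin{\com}_\Omega\big)^{!} \;=\; \tot{(\com^!)}_\Omega \;=\; \tot{\lie}_\Omega.
\end{equation*}
Since $\com$ is finitely generated, binary and quadratic with $M(2)$ finite dimensional, the hypotheses of Theorem~\mref{thm:dul} are met, so this identification is valid. Because an operad is Koszul if and only if its Koszul dual is Koszul (the standard duality property, see~\mcite{LV}), the Koszulness of $\lin{\com}_\Omega$ established in the first step transfers to $\tot{\lie}_\Omega$.

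The main obstacle is essentially bookkeeping rather than mathematical depth: one must be careful that the setting of~\mcite{Dot1} (binary quadratic symmetric operads, $|\Omega|=2$) exactly matches ours, so that the cited Koszulness result and Theorem~\mref{thm:dul} can be combined cleanly. Once this is checked, both conclusions follow in one line each. No dedicated Gr\"obner-basis argument in the style of Theorem~\mref{thm:wak} is needed, since the linear compatibility for $\com$ and the total compatibility for $\lie$ are already covered by the existing literature together with the duality established earlier in this paper.
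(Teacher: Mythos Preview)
Your proposal is correct and essentially coincides with the paper's argument. The paper simply cites the result of Dotsenko~\mcite{Dot1} (and Strohmayer~\mcite{St09}) which, for $\spp=\com$, yields the Koszulness of both $\lin{\com}_\Omega$ and $\tot{(\com^!)}_\Omega=\tot{\lie}_\Omega$ at once; your version unpacks this by getting $\lin{\com}_\Omega$ from the cited criterion and then passing to $\tot{\lie}_\Omega$ via Theorem~\mref{thm:dul} and the stability of Koszulness under Koszul duality, which is precisely the mechanism underlying the cited result.
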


\delete{
\begin{remark}\mlabel{prop:lc}
	\mlabel{em:cwa}
	The totally compatible operad
	$$\tot{\com}_\Omega=\stt\Big(\bigoplus_{\omega\in \Omega} \treey{\cdlr{o}\node at  (o)[left=0.2] [scale=0.8]{$\omega$};} \Big)\Big/\Big\langle R_{\rm LMT}\cup \trr\Big\rangle$$
	of the commutative operad is weakly associative. Here
	$R_{\rm LMT}\cup \trr $ is given by
	\begin{eqnarray*}
		R_{\rm LMT}=~\Biggr\{\treey{\cdlr{ol}\node at (oll)[above=0.2] [scale=0.8]{$1$};\node at (olr)[above=0.2] [scale=0.8]{$2$};\node at (or)[above=0.2] [scale=0.8]{$3$};
			\node at  (ol)[left=0.2] [scale=0.8]{$\mu$};\node at  (o)[left=0.2] [scale=0.8]{$\mu$};}
		-\treey{\cdlr{or}\node at (ol)[above=0.2] [scale=0.8]{$1$};\node at (orl)[above=0.2] [scale=0.8]{$2$};\node at (orr)[above=0.2] [scale=0.8]{$3$};
			\node at  (or)[left=0.2] [scale=0.8]{$\mu$};\node at  (o)[left=0.2] [scale=0.8]{$\mu$};},\,
		\treey{\cdlr{ol}\node at (oll)[above=0.2] [scale=0.8]{$1$};\node at (olr)[above=0.2] [scale=0.8]{$2$};\node at (or)[above=0.2] [scale=0.8]{$3$};
			\node at  (ol)[left=0.2] [scale=0.8]{$\mu$};\node at  (o)[left=0.2] [scale=0.8]{$\mu$};}
		-\treey{\cdlr{or}\node at (ol)[above=0.2] [scale=0.8]{$2$};\node at (orl)[above=0.2] [scale=0.8]{$3$};\node at (orr)[above=0.2] [scale=0.8]{$1$};
			\node at  (or)[left=0.2] [scale=0.8]{$\mu$};\node at  (o)[left=0.2] [scale=0.8]{$\mu$};},\,
		\treey{\cdlr{ol}\node at (oll)[above=0.2] [scale=0.8]{$1$};\node at (olr)[above=0.2] [scale=0.8]{$2$};\node at (or)[above=0.2] [scale=0.8]{$3$};
			\node at  (ol)[left=0.2] [scale=0.8]{$\mu$};\node at  (o)[left=0.2] [scale=0.8]{$\nu$};}
		-\treey{\cdlr{or}\node at (ol)[above=0.2] [scale=0.8]{$1$};\node at (orl)[above=0.2] [scale=0.8]{$2$};\node at (orr)[above=0.2] [scale=0.8]{$3$};
			\node at  (or)[left=0.2] [scale=0.8]{$\mu$};\node at  (o)[left=0.2] [scale=0.8]{$\nu$};},\,
		\treey{\cdlr{ol}\node at (oll)[above=0.2] [scale=0.8]{$1$};\node at (olr)[above=0.2] [scale=0.8]{$2$};\node at (or)[above=0.2] [scale=0.8]{$3$};
			\node at  (ol)[left=0.2] [scale=0.8]{$\mu$};\node at  (o)[left=0.2] [scale=0.8]{$\nu$};}
		-\treey{\cdlr{or}\node at (ol)[above=0.2] [scale=0.8]{$2$};\node at (orl)[above=0.2] [scale=0.8]{$3$};\node at (orr)[above=0.2] [scale=0.8]{$1$};
			\node at  (or)[left=0.2] [scale=0.8]{$\mu$};\node at  (o)[left=0.2] [scale=0.8]{$\nu$};}
		\,\Big|\,
		\mu,\nu\in\Omega
		\Biggr\}
		\vse
	\end{eqnarray*}
	and
	\vse
	\begin{eqnarray*}
		\trr=\Biggr\{\treey{\cdlr{ol}\node at (oll)[above=0.2] [scale=0.8]{$1$};\node at (olr)[above=0.2] [scale=0.8]{$2$};\node at (or)[above=0.2] [scale=0.8]{$3$};
			\node at  (ol)[left=0.2] [scale=0.8]{$\mu$};\node at  (o)[left=0.2] [scale=0.8]{$\nu$};}
		-\treey{\cdlr{ol}\node at (oll)[above=0.2] [scale=0.8]{$1$};\node at (olr)[above=0.2] [scale=0.8]{$2$};\node at (or)[above=0.2] [scale=0.8]{$3$};
			\node at  (ol)[left=0.2] [scale=0.8]{$\nu$};\node at  (o)[left=0.2] [scale=0.8]{$\mu$};},\,
		\treey{\cdlr{or}\node at (ol)[above=0.2] [scale=0.8]{$1$};\node at (orl)[above=0.2] [scale=0.8]{$2$};\node at (orr)[above=0.2] [scale=0.8]{$3$};
			\node at  (or)[left=0.2] [scale=0.8]{$\mu$};\node at  (o)[left=0.2] [scale=0.8]{$\nu$};}
		-\treey{\cdlr{or}\node at (ol)[above=0.2] [scale=0.8]{$1$};\node at (orl)[above=0.2] [scale=0.8]{$2$};\node at (orr)[above=0.2] [scale=0.8]{$3$};
			\node at  (or)[left=0.2] [scale=0.8]{$\nu$};\node at  (o)[left=0.2] [scale=0.8]{$\mu$};},\,
		\treey{\cdlr{or}\node at (ol)[above=0.2] [scale=0.8]{$2$};\node at (orl)[above=0.2] [scale=0.8]{$3$};\node at (orr)[above=0.2] [scale=0.8]{$1$};
			\node at  (or)[left=0.2] [scale=0.8]{$\mu$};\node at  (o)[left=0.2] [scale=0.8]{$\nu$};}
		-\treey{\cdlr{or}\node at (ol)[above=0.2] [scale=0.8]{$2$};\node at (orl)[above=0.2] [scale=0.8]{$3$};\node at (orr)[above=0.2] [scale=0.8]{$1$};
			\node at  (or)[left=0.2] [scale=0.8]{$\nu$};\node at  (o)[left=0.2] [scale=0.8]{$\mu$};}
		\,\Big|\,
		\mu,\nu\in\Omega
		\Biggr\}.
		\vsb
	\end{eqnarray*}
	The operad $\com$ also satisfies the assumption in~\cite{St09}.
	Thus the operads $\tot{\com}_\Omega$ and $\lin{\lie}_\Omega$ are also Koszul.
\end{remark}
}

\begin{theorem}\mlabel{thm:lmtk}
Let $\spp$ be a binary quadratic Koszul operad such that $\big(\spp\circ \spp\big) (4)\cong \tot{\com}_\Omega(4) \otimes \spp(4)$.  Then
the operads $\tot{\spp}_\Omega\cong \tot{\com}_\Omega \bigcir \spp$ and $\lin{\big(\spp^!\big)}_\Omega$ are Koszul.
\end{theorem}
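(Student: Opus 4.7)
The plan is to derive both Koszulness statements from a single application of a Manin-product Koszulness criterion together with Koszul self-duality of the compatible constructions.

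First I would invoke Proposition~\ref{prop:maninbl} to rewrite the totally compatible operad as the Manin white product $\tot{\spp}_\Omega \cong \tot{\com}_\Omega \bigcir \spp$. This reduces the first claim to showing that this white product of two Koszul binary quadratic operads is again Koszul: the operad $\spp$ is Koszul by hypothesis, and $\tot{\com}_\Omega$ is Koszul by Proposition~\ref{prop:cl} (or by the self-duality of Theorem~\ref{thm:dul} combined with the Koszulness of $\lin{\lie}_\Omega$).

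Next I would appeal to the standard Koszulness criterion for Manin products of binary quadratic operads (the operadic analogue of Backelin's theorem; see Loday--Vallette, and also the formulation in~\cite{Val08}): if two binary quadratic operads $\mathscr{P}_1,\mathscr{P}_2$ are Koszul and if the dimension of $(\mathscr{P}_1\bigcir\mathscr{P}_2)(4)$ (equivalently, the arity-$4$ component of the free part modulo relations) equals $\dim \mathscr{P}_1(4)\cdot\dim\mathscr{P}_2(4)$, then $\mathscr{P}_1\bigcir\mathscr{P}_2$ is Koszul. The hypothesis $(\spp\circ\spp)(4)\cong \tot{\com}_\Omega(4)\otimes \spp(4)$ is precisely the arity-$4$ matching condition needed to ensure that this criterion applies, i.e.\ that the quadratic relations of $\tot{\com}_\Omega \bigcir \spp$ satisfy the distributive law at arity $4$. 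Granting this, we conclude that $\tot{\spp}_\Omega \cong \tot{\com}_\Omega\bigcir\spp$ is Koszul.

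For the second claim, I would apply the Koszul duality established in Theorem~\ref{thm:dul} to obtain
\[
\lin{\bigl(\spp^!\bigr)}_\Omega \;=\; \bigl(\tot{\spp}_\Omega\bigr)^{!}.
\]
Since the Koszul dual of a Koszul quadratic operad is Koszul, the Koszulness of $\tot{\spp}_\Omega$ established in the previous step transfers immediately to $\lin{(\spp^!)}_\Omega$. The main obstacle is the middle step: reading the arity-$4$ hypothesis as the precise distributive-law condition required by the Manin-product Koszulness criterion, which amounts to producing an explicit filtration (or rewriting rule) on $\tot{\com}_\Omega\bigcir\spp$ whose associated graded is the free composite $\tot{\com}_\Omega\circ\spp$ at arity $4$ and then invoking the inductive extension to higher arities supplied by the criterion.
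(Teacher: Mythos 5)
The second half of your argument --- transferring Koszulness to $\lin{\big(\spp^!\big)}_\Omega$ via Theorem~\ref{thm:dul} and the fact that Koszul duals of Koszul operads are Koszul --- is exactly what the paper does. The gap is in the first half. You reduce everything to a ``Koszulness criterion for Manin white products'' (an operadic Backelin-type theorem: a white product of Koszul binary quadratic operads is Koszul once an arity-$4$ dimension count matches), but no such general criterion is stated in the sources you invoke (Loday--Vallette or Vallette's Manin products paper), and you do not prove it; you yourself flag this middle step as ``the main obstacle'' and then grant it. As written, the central claim --- Koszulness of $\tot{\com}_\Omega\bigcir\spp$ --- is unsupported. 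You also misread the hypothesis: it compares $\tot{\com}_\Omega(4)\otimes\spp(4)$ with $\big(\spp\circ\spp\big)(4)$, the arity-$4$ component of the composite of $\spp$ with \emph{itself}, which is not a statement about the white product versus the Hadamard product of $\tot{\com}_\Omega$ with $\spp$; it signals a distributive law between the two colored copies of $\spp$. A further slip: Proposition~\ref{prop:cl} asserts Koszulness of $\tot{\lie}_\Omega$ and $\lin{\com}_\Omega$, not of $\tot{\com}_\Omega$; in the paper the Koszulness of $\tot{\com}_\Omega$ is deduced \emph{from} Theorem~\ref{thm:lmtk} in Example~\ref{ex:lc}, so using it as an input would be circular unless imported independently from Strohmayer.

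The paper's route avoids all of this. Since $\tot{\com}_\Omega$ is weakly associative, \cite[Proposition 1.27]{St09} identifies the white product with the Hadamard product, so $\tot{\com}_\Omega\bigcir\spp\cong\tot{\com}_\Omega\underset{\rm H}{\otimes}\spp\cong\tot{\spp}_\Omega$. Writing $\Omega=\{\circ,\bullet\}$ and $\tot{\spp}_\Omega=\mscr{T}(M_\circ\oplus M_\bullet)/\langle R_\circ\cup R_{\circ\bullet}\cup R_\bullet\rangle$, the hypothesis becomes $\big(\spp\circ\spp\big)(4)\cong\tot{\spp}_\Omega(4)$, which is precisely Markl's arity-$4$ condition for the mixed relations $R_{\circ\bullet}$ to induce a distributive law between the two copies of $\spp$; Markl's theorem \cite[Theorem 5.10]{Mar} then yields Koszulness of $\tot{\spp}_\Omega$ from the Koszulness of $\spp$ alone --- Koszulness of $\tot{\com}_\Omega$ is never needed. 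To repair your outline you must either prove the white-product criterion you invoke (which would be a new result, not a citation) or replace that step by this distributive-law argument.
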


%\li{It is conjectured that the condition can be removed? Or is there a counterexample that, without the condition, the conclusion does not hold? (Dotsenko?)}

%\vsa
\begin{proof}
We will use the distributive law method~\cite{Mar}. By $\tot{\com}_\Omega$ being weakly associative and~\cite[Proposition 1.27]{St09}, we have
$$\tot{\com}_\Omega \bigcir \spp\cong  \tot{\com}_\Omega \underset{{\rm H}}{\otimes} \spp.$$
Write $\Omega=\{\circ,\bullet\}$ and $\spp=\stt(M)/\langle R\rangle.$ Then
$$\tot{\spp}_\Omega=\stt(M_\circ\oplus M_\bullet)/\langle R_\circ \cup R_{\circ\bullet} \cup R_\bullet\rangle,$$
where $M_\circ \cong M_\bullet\cong M$, $R_\circ \cong R_\bullet\cong R$ and $R_{\circ\bullet}=R_{{\rm TC}}\cup (R_{{\rm LMT}}\setminus(R_\circ \cup R_\bullet))$ is the set of compatible relations.
Notice that
 $$\big(\spp\circ \spp\big) (4)\cong \tot{\com}_\Omega(4) \otimes \spp(4)\cong\big(\tot{\com}_\Omega \underset{{\rm H}}{\otimes} \spp\big)(4)= \tot{\spp}_\Omega(4)=\Big(\stt(M_\circ\oplus M_\bullet)/\langle R_\circ \cup R_{\circ\bullet} \cup R_\bullet\rangle\Big)(4).$$
 Then the relation $R_{\circ\bullet}$ induces a distributive law  in the
sense of Markl~\cite[Definition 2.6]{Mar}. Further by the property of distributive law~\cite[Theorem 5.10]{Mar},  the operad  constructed from operads $\spp$ and $\sqq$ via a distributive law is Koszul if $\spp$ and $\sqq$ are, then $\tot{\spp}_\Omega$ is Koszul.
The Koszulness of $\lin{\big(\spp^!\big)}_\Omega$ follows from Theorem~\mref{thm:dul} and the fact that
the Koszul dual operad of a Koszul operad is also Koszul.
\end{proof}

\begin{example}\label{ex:lc}
The operad $\com$ satisfies
$$\big(\com\circ \com\big) (4)\cong \tot{\com}_\Omega(4)=\tot{\com}_\Omega(4) \otimes \com(4).$$
In fact, $\big(\com\circ \com\big) (4)\to \tot{\com}_\Omega(4)$ is surjective.
It is a injective map from that $\big(\com\circ \com\big) (4)$ and $\tot{\com}_\Omega(4)$
have the same linear basis
$$\treey{\cdlr{or}\cdlr{orr}\node at (ol)[above=0.2] [scale=0.8]{$1$};\node at (orl)[above=0.2] [scale=0.8]{$2$};\node at (orrl)[above=0.2] [scale=0.8]{$3$};\node at  (orrr) [above=0.2] [scale=0.8]{$4$};
\node at  (o) [left=0.2] [scale=0.8]{$\mu_\circ$};\node at  (or) [left=0.2] [scale=0.8]{$\mu_\circ$};\node at  (orr)[left=0.2] [scale=0.8]{$\mu_\circ$};},
\treey{\cdlr{or}\cdlr{orr}\node at (ol)[above=0.2] [scale=0.8]{$1$};\node at (orl)[above=0.2] [scale=0.8]{$2$};\node at (orrl)[above=0.2] [scale=0.8]{$3$};\node at  (orrr) [above=0.2] [scale=0.8]{$4$};
\node at  (o) [left=0.2] [scale=0.8]{$\mu_\circ$};\node at  (or) [left=0.2] [scale=0.8]{$\mu_\circ$};\node at  (orr)[left=0.2] [scale=0.8]{$\mu_\bullet$};},
\treey{\cdlr{or}\cdlr{orr}\node at (ol)[above=0.2] [scale=0.8]{$1$};\node at (orl)[above=0.2] [scale=0.8]{$2$};\node at (orrl)[above=0.2] [scale=0.8]{$3$};\node at  (orrr) [above=0.2] [scale=0.8]{$4$};
\node at  (o) [left=0.2] [scale=0.8]{$\mu_\circ$};\node at  (or) [left=0.2] [scale=0.8]{$\mu_\bullet$};\node at  (orr)[left=0.2] [scale=0.8]{$\mu_\bullet$};},
\treey{\cdlr{or}\cdlr{orr}\node at (ol)[above=0.2] [scale=0.8]{$1$};\node at (orl)[above=0.2] [scale=0.8]{$2$};\node at (orrl)[above=0.2] [scale=0.8]{$3$};\node at  (orrr) [above=0.2] [scale=0.8]{$4$};
\node at  (o) [left=0.2] [scale=0.8]{$\mu_\bullet$};\node at  (or) [left=0.2] [scale=0.8]{$\mu_\bullet$};\node at  (orr)[left=0.2] [scale=0.8]{$\mu_\bullet$};}.$$
Here $\Omega=\{\circ,\bullet\}$, $\mu_\circ\in M_\circ\cong M_{\com }$ and $\mu_\bullet\in M_\bullet\cong M_{\com }$.
By Theorem~\ref{thm:lmtk}, the operads $\tot{\com}_\Omega\cong \tot{\com}_\Omega \bigcir \com$ and $\lin{\lie}_\Omega$ are Koszul.
For another approach, see~\cite{St09}.
\end{example}

\begin{remark}
If the condition $\big(\spp\circ \spp\big) (4)\cong \tot{\com}_\Omega(4) \otimes \spp(4)$ is not satisfied, then conclusion in Theorem~\ref{thm:lmtk} might not hold. For example, take $\mathscr{P}=\mathscr{M}ag$. It is Koszul. But $\lin{(\mathscr{M}ag^!)}_\Omega=\lin{\mathscr{N}il}_\Omega$ is not Koszul by~\cite{Dot1}. 
\end{remark}

\delete{
Based on the above results, we obtain a large class of Koszul operads.

\begin{corollary}\label{coro:ko}
The following operads are Koszul.
{\small
$$\begin{tabular}{|c|c|c|c|}
\hline %\rule{0pt}{15pt}
{\rm operads} & {\rm linearly compatible operads} & {\rm totally compatible operads} & {\rm \lev matching operads}  \\
\hline \rule{0pt}{15pt}
 $ \lie$   &$\lin{\lie}_\Omega $& $\tot{\lie}_\Omega$ & $\lmat{\lie}_\Omega$\\
\hline \rule{0pt}{15pt}
 $ \com$   &$\lin{\com}_\Omega $& $\tot{\com}_\Omega$ & $\lmat{\com}_\Omega$\\
%\hline \rule{0pt}{15pt}
% $ \mathscr{N}il$   &$\lin{\mathscr{N}il}_\Omega $& $\tot{\mathscr{M}ag}_\Omega$ & $\lmat{\mathscr{N}il}_\Omega$\\
\hline \rule{0pt}{15pt}
 $ \mscr{P}re\lie$   &$\lin{\mscr{P}re\lie}_\Omega $&  & $\lmat{\mscr{P}re\lie}_\Omega$\\
\hline \rule{0pt}{15pt}
 $ \mscr{P}erm$   && $\tot{\mscr{P}erm}_\Omega$ & $\lmat{\mscr{P}erm}_\Omega$\\
\hline \rule{0pt}{15pt}
$ \mscr{L}eib$   &&  & $\lmat{\mscr{L}eib}_\Omega$\\
\hline \rule{0pt}{15pt}
$ \mscr{Z}inb$   &&  & $\lmat{\mscr{Z}inb}_\Omega$\\
\hline \rule{0pt}{15pt}
$ \mscr{P}ois$   &&  & $\lmat{\mscr{P}ois}_\Omega$\\
\hline
\end{tabular}$$
}
\end{corollary}
%\hu{I guess the operads in the blank part of the above table are also Koszul.
% I'll keep trying to prove that all compatible structures preserve Koszul or Gr\"{o}bner basis  property.
%}

%\li{Should be conjectured that the other cases are also Koszul?} 

We believe that the other cases are also Koszul. 

\begin{proof}
The operads in the first column are Koszul by~\mcite{Zi}.
The operads in the last column are Koszul by Theorem~\ref{thm:wak} and the fact that each operad in the first column has a Gr\"{o}bner basis~\cite{BreD,LV}.
The Koszulness of the operads $\lin{\lie}_\Omega, \tot{\lie}_\Omega, \lin{\com}_\Omega$ and $\tot{\com}_\Omega$ follows from Proposition~\ref{prop:cl} and Example~\ref{ex:lc}. The operads $\tot{\mscr{P}erm}_\Omega$ and $\lin{\mscr{P}re\lie}_\Omega$ are also Koszul~\cite[Corollary 3.13]{St09}.
\end{proof}
}

\noindent
{\bf Acknowledgments.}
This work is supported by the National Natural Science Foundation of China (12071191), the Natural Science Foundation of Gansu Province (20JR5RA249) and the Natural Science Foundation of Shandong Province (ZR2020MA002). The authors thank V. Dotsenko for insightful discussions. 

\noindent
{\bf Declaration of interests. } The authors have no conflicts of interest to disclose.

\noindent
{\bf Data availability. } Data sharing is not applicable as no new data were created or analyzed.

\vspace{-.3cm}

%\bibliographystyle{ieee}

%\bibliography{CASSreference}

\section*{Appendix: Polarizations of polynomials and their foliations}

\mlabel{s:polarpoly}

Here we recall the polarization of polynomials~\mcite{Pro} in invariant theory and then introduce the refined notion of foliation from a polarization.
Though interesting on their own rights and independent of the rest of the paper, they are provided here as the ``baby model'' for the corresponding concepts for operads in Section~\mref{sec:comp} used to introduce various compatibilities of operads.

\subsection*{A.1. Polarizations of polynomials}
\mlabel{ss:polar}
Let $x=(x_1,\ldots, x_n)$ be a vector of variables and $f(x)=f(x_1,\ldots,x_n)$ a homogeneous polynomial of degree $m$. Denote $x^{(i)}:=(x_1^{(i)},\ldots,x_n^{(i)}), 1\leq i\leq m$, with distinct variables.

For $\malpha_i\in \bfk, 1\leq i\leq m$, denote
\vsa
$$\malpha_1 x^{(1)}+\cdots +\malpha_m x^{(m)}\coloneqq \big(\malpha_1x_1^{(1)}+\cdots+\malpha_mx_1^{(m)}, \ldots, \malpha_1x_n^{(1)}+\cdots+\malpha_mx_n^{(m)}\big).
\vsa
$$
Regarding $\malpha_1,\ldots,\malpha_m$ as formal variables (for example by taking $\bfk=\QQ(\malpha_1,\ldots,\malpha_m)$), we have the polynomial expansion
\begin{equation}
	f( \malpha_1 x^{(1)}+\cdots+\malpha_m x^{(m)})=\sum_{\pn_1,\ldots, \pn_m} f_{\pn_1,\ldots,\pn_m}(x^{(1)},\ldots, x^{(m)})\,\malpha_1^{\pn_1}\cdots \malpha_m^{\pn_m},
	\mlabel{eq:polarp}
\end{equation}
where the sum is over tuples $\pn:=(\pn_1,\ldots,\pn_m)\in \ZZ_{\geq 0}^k$ with $\sum_{i=1}^m \pn_i=m$, that is, the weak compositions of $m$.
Thus $f_{\pn_1,\ldots,\pn_m}(x^{(1)},\ldots,x^{(m)})$ is the contribution to $f( x^{(1)}+\cdots+x^{(m)})$ from monomials in $x^{(j)}_i$ whose total degree in $x^{(i)}_1,\ldots,x^{(i)}_n$ is $\pn_i, 1\leq i\leq m$.
The polynomial $f_{\pn_1,\ldots,\pn_m}(x^{(1)},\ldots,x^{(m)})$ is called the {\bf quasipolarization} of $f$ of type $\pn$.
The unique multilinear polynomial
$$F(x^{(1)}, \ldots, x^{(m)}):=f_{1,\ldots,1}(x^{(1)},\ldots, x^{(m)})$$
is called the {\bf full polarization} of $f$.

Eq.~\meqref{eq:polarp} shows that a quasipolarization can also be given by the {\bf polarization operator}:
$$f_{\pn_1,\ldots,\pn_m}(x^{(1)},\ldots,x^{(m)})
=\frac{\partial^{c_1}}{\partial\malpha_1^{c_1}}\cdots \frac{\partial^{\pn_m}}{\partial\malpha_m^{\pn_m}}f(\malpha_1 x^{(1)}+\cdots+\malpha_m x^{(m)})\big|_{\malpha_i=0},\,\text{ where }\, \malpha_i\in \bfk, 1\leq i\leq m.$$
Here $\frac{\partial}{\partial \malpha_i}$ is the formal partial derivation treating $\malpha_i$ as variables.

Notice that the set $\{f_{\pn_1,\ldots,\pn_m}(x^{(1)},\ldots, x^{(m)})\,|\,\sum_{i=1}^m \pn_i=m\}$ of quasipolarizations is {\bf symmetrically closed} in the sense that it is closed under permutations of the variables. In fact,
$$f_{\pn_1,\ldots,\pn_m}(x^{(\sigma(1))},\ldots, x^{(\sigma(m))})=f_{\pn_{\sigma^{-1}(1)},\ldots,\pn_{\sigma^{-1}(m)}}(x^{(1)},\ldots, x^{(m)}), \quad  \sigma\in S_m.$$
Moreover, we have the {\bf restitution}
\begin{equation}\mlabel{eq:res}
	f_{\pn_1,\ldots,\pn_m}(x^{(1)},\ldots, x^{(m)})|_{x^{(i)}\mapsto x}=\tbinom{m}{(\pn_i)}f(x)
\end{equation}
for the multinomial coefficients $\tbinom{m}{(\pn_i)}:=\tbinom{m}{\pn_1,\ldots, \pn_m }$.
In particular, $F(x, \ldots, x)=m!f(x)$.

\begin{example}\mlabel{exam:alpl}
	Consider the quadratic form $f(x_1,x_2)=2x_1^2+3x_1x_2+4x_2^2$. For $\malpha_1, \malpha_2\in\bfk$,  we have
	\begin{eqnarray*}
		&&f\Big(\malpha_1({x^{(1)}_1},{x^{(1)}_2})+\malpha_2({x^{(2)}_1},{x^{(2)}_2})\Big)
		=f\Big(\malpha_1 {x^{(1)}_1}+ \malpha_2 {x^{(2)}_1}, \malpha_1 {x^{(1)}_2}+ \malpha_2 {x^{(2)}_2}\Big)\\
		&=&\malpha_1^2\Big(2{x^{(1)}_1}^2+3{x^{(1)}_1}{x^{(1)}_2}+4{x^{(1)}_2}^2\Big)
		+\malpha_2^2\Big(2{x^{(2)}_1}^2+3{x^{(2)}_1}{x^{(2)}_2}+4{x^{(2)}_2}^2\Big)\\
		&&+\malpha_1\malpha_2\Big(2 \big({x^{(1)}_1}{x^{(2)}_1}+ {x^{(2)}_1}{x^{(1)}_1}\big)
		+3\big({x^{(1)}_1}{x^{(2)}_2}+{x^{(2)}_1}{x^{(1)}_2}\big)
		+4 \big({x^{(1)}_2}{x^{(2)}_2}+ {x^{(2)}_2}{x^{(1)}_2}\big)\Big).
	\end{eqnarray*}
	So we obtain
	\vsc
	\begin{eqnarray*}
		f_{2,0}&=& 2{x^{(1)}_1}^2+3{x^{(1)}_1}{x^{(1)}_2}+4{x^{(1)}_2}^2,\\
		f_{0,2}&=&2{x^{(2)}_1}^2+3{x^{(2)}_1}{x^{(2)}_2}+4{x^{(2)}_2}^2,\\
		F:=f_{1,1}&=&2\big({x^{(1)}_1}{x^{(2)}_1}+ {x^{(2)}_1}{x^{(1)}_1}\big)+3\big({x^{(1)}_1}{x^{(2)}_2}+{x^{(2)}_1}{x^{(1)}_2}\big)+4\big({x^{(1)}_2}{x^{(2)}_2}+ {x^{(2)}_2}{x^{(1)}_2}\big),
	\end{eqnarray*}
	with the last one being the full polarization.
	Observe that $F({x^{(1)}_1},{x^{(1)}_2}, {x^{(2)}_1},{x^{(2)}_2})$ is multilinear, symmetric in $({x^{(1)}_1},{x^{(1)}_2})$ and in $({x^{(2)}_1},{x^{(2)}_2})$ respectively. Moreover,
	$F(x_1,x_2,x_1,x_2)=2f(x_1,x_2).$
\end{example}

\subsection*{A.2. Foliations and \ufos of polarizations}
\mlabel{ss:folia}
A foliation is a refinement of the quasipolarization.
For a weak composition $\pn:=(\pn_1,\ldots,\pn_m)\in \ZZ_{\geq 0}^k$ of $m$,
recall that quasipolarization $f_{\pn}$ has the restitution in Eq.~\meqref{eq:res}:
$f_{\pn}(x^{(1)},\ldots, x^{(m)})|_{x^{(i)}\mapsto x}=\tbinom{m}{(\pn_i)} f(x).$
It is natural to find a partition of $f_{\pn}$ such that the restriction of each part is exactly $f(x)$.
Thus we define a {\bf foliation} of $f_{\pn}$ to be a partition $\{f_{\pc,k}\,|\,1\leq k\leq \tbinom{m}{(\pn_i)}\}$ of $f_{\pn}$ $\big($in the sense that $f_{\pn}=f_{\pn,1}+\cdots+f_{\pn,\tbinom{m}{(\pn_i)}}$$\big)$ such that
$$f_{\pn,k}\big|_{x^{(i)}\mapsto x}=f, \quad 1\leq k \leq \tbinom{m}{(\pn_i)}.$$

We next give a classification of all the foliations of a given quasipolarization, first presenting an example as a demonstration of the general procedure.

\begin{example}\mlabel{ex:fopo}
	Continuing Example~\mref{exam:alpl},
	there the polarization
	of $ f(x_1,x_2)=2x_1^2+3x_1x_2+4x_2^2$ is
	\vsb
	\begin{eqnarray} \notag
		f_{1,1}&=&
		2\big({x^{(1)}_1}{x^{(2)}_1}+ {x^{(2)}_1}{x^{(1)}_1}\big)+3\big({x^{(1)}_1}{x^{(2)}_2}+{x^{(2)}_1}{x^{(1)}_2}\big) +4\big({x^{(1)}_2}{x^{(2)}_2}+ {x^{(2)}_2}{x^{(1)}_2}\big)\\
		&=& \mlabel{eq:foliex}
		\begin{split}
			2{x^{(1)}_1}{x^{(2)}_1}
			+3{x^{(1)}_1}{x^{(2)}_2} +4{x^{(1)}_2}{x^{(2)}_2}\\
			+	2{x^{(2)}_1}{x^{(1)}_1}
			+3{x^{(2)}_1}{x^{(1)}_2} +4 {x^{(2)}_2}{x^{(1)}_2}.
		\end{split}
	\end{eqnarray}
	Since the restitution of $f_{1,1}$ is
	$f_{1,1}\Big|_{x^{(i)}\mapsto x}=\tbinom{2}{1}f=2f$, a foliation of $f_{1,1}$ is a partition $f_{1,1}=f_{1,1,1}+f_{1,1,2}$ such that $f_{1,1,k}\big|_{x^{(i)}\mapsto x}=f, k=1,2$. As displayed in Eq.~\meqref{eq:foliex}, the obvious one is
	$$
	\{f_{1,1,1}, f_{1,1,2}\}:=\{ 2x_1^{(1)}x_1^{(2)}+3x_1^{(1)}x_2^{(2)}+4x_2^{(1)}x_2^{(2)}, \quad
	2x_1^{(2)}x_1^{(1)}+3x_1^{(2)}x_2^{(1)}+4x_2^{(2)}x_2^{(1)}\}.
	$$
	Furthermore, by exchanging the entries in the second (resp. the third, resp. both) columns in Eq.~\meqref{eq:foliex}, before summing up the two rows, we obtain three more foliations:
	$$\{ 2x_1^{(1)}x_1^{(2)}+3x_1^{(2)}x_2^{(1)}+4x_2^{(1)}x_2^{(2)}, \quad
	2x_1^{(2)}x_1^{(1)}+3x_1^{(1)}x_2^{(2)}+4x_2^{(2)}x_2^{(1)}\},
	$$
	$$\{ 2x_1^{(1)}x_1^{(2)}+3x_1^{(1)}x_2^{(2)}+4x_2^{(2)}x_2^{(1)},
	\quad
	2x_1^{(2)}x_1^{(1)}+3x_1^{(2)}x_2^{(1)}+4x_2^{(1)}x_2^{(2)}\},
	$$
	$$\{ 2x_1^{(1)}x_1^{(2)}+3x_1^{(2)}x_2^{(1)}+4x_2^{(2)}x_2^{(1)}, \quad
	2x_1^{(2)}x_1^{(1)}+3x_1^{(1)}x_2^{(2)}+4x_2^{(1)}x_2^{(2)}\}.
	$$
	These are all the foliations of $f_{1,1}$.
\end{example}

In general, write a homogeneous polynomial $f$ of degree $m$ as
\begin{equation*}
	f(x)=f(x_1,\ldots,x_n)=\sum_{0\leq i \leq s} \alpha_i q_i(x_1,\ldots,x_n),
	\mlabel{eq:homof}
\end{equation*}
where $\alpha_i\in\bfk$ and $q_i$ is a monomial of degree $m$.
Let $c$ be a weak partition of $m$. For notational clarity, we denote
$$\sbin:=\mbin=\tbinom{m}{(\pn_i)}=\tbinom{m}{\pn_1,\ldots, \pn_m }.$$

Note that the quasipolarization $q_{i,c}$ of the monomial $q_i$ has the unique foliation $\{q_{i,c,k}\,|\, 1\leq k\leq \sbin\}$.
Then
$$f_{\pn}(x^{(1)},\ldots, x^{(m)})
=\sum_{0\leq i \leq s} \alpha_i q_{i,c}(x_1,\ldots,x_n)
=\sum_{0\leq i \leq s} \alpha_i \sum_{1\leq k\leq {\sbin}}q_{i,c,k}(x_1,\ldots,x_n)
=\sum_{1\leq k\leq {\sbin}}\sum_{0\leq i \leq s} \alpha_i q_{i,c,k}(x_1,\ldots,x_n),$$
giving the natural partition of $f_{\pn}$ into the system
\vsc
\begin{equation}
	\left\{\sum_{0\leq i \leq s} \alpha_i q_{i,c,k}\,\Big|\, {1\leq k\leq {\sbin}}\right\}
	=\left (\begin{array}{cccc}
		q_{0,c,1} & q_{1,c,1} & \cdots & q_{s,c,1} \\
		q_{0,c,2} & q_{1,c,2} & \cdots & q_{s,c,2} \\
		\vdots & \vdots & \ddots & \vdots \\
		q_{0,c,{\sbin}} & q_{1,c,{\sbin}} & \cdots & q_{s,c,{\sbin}}
	\end{array} \right )
	\left(
	\begin{array}{c}
		\alpha_0\\
		\alpha_1\\
		\vdots\\
		\alpha_s
	\end{array}
	\right).
	\label{eq:stfolio}
\end{equation}
We call this system the {\bf standard foliation} of $f_c$, with respect to the natural ordering of the partition $\{q_{i,c,k}\,|\,1\leq k\leq {\sbin}\}$ for $0\leq i\leq s$.

Starting with the standard foliation one can construct other foliations of $f_c$. Indeed, let $S_{\sbin}$ denote the symmetric group on $\sbin$ letters. Then replacing the $\sbin\times (s+1)$-matrix in Eq.~\eqref{eq:stfolio} by the one with the $i$-column permuted by $\sigma_i\in S_{\sbin}$ for $1 \leq i \leq s$, we obtain another foliation of $f_c$
\begin{equation}
	\left\{\alpha_0 q_{0,c,k}+\sum_{1\leq i \leq s} \alpha_i q_{i,c,\sigma_i(k)}\,\Big|\, {1\leq k\leq {\sbin}}\right\}
	=\left (\begin{array}{cccc}
		q_{0,c,1} & q_{1,c,\sigma_1(1)} & \cdots & q_{s,c,\sigma_s(1)} \\
		q_{0,c,2} & q_{1,c,\sigma_1(2)} & \cdots & q_{s,c,\sigma_s(2)} \\
		\vdots & \vdots & \ddots & \vdots \\
		q_{0,c,{\sbin}} & q_{1,c,\sigma_1({\sbin})} & \cdots & q_{s,c,\sigma_s({\sbin})}
	\end{array} \right )
	\left(
	\begin{array}{c}
		\alpha_0\\
		\alpha_1\\
		\vdots\\
		\alpha_s
	\end{array}
	\right),
	\label{eq:permfolia}
\end{equation}
called the {\bf foliation of $f_c$ with respect to $(\sigma_1,\ldots,\sigma_s)$}.
\smallskip

Finally, as a prototype of the total compatibility of operads in \S\,\mref{ss:totc},
we add additional linear forms to the system of a foliation in Eq.~\meqref{eq:permfolia}, so that all the foliations become the same, independent of the permutations $\sigma_i\in S_\sbin$.
This is achieved by adding $q_{i,c,\sigma(i)}-q_{i,c,\tau(i)}$ for all $\sigma, \tau\in S_\sbin$ and $0\leq i\leq s$. The resulting system is called the {\bf \ufo} of $f_c$.

\begin{example}
	In Example~\mref{ex:fopo}, there are four foliations of $f_{1,1}$.
	The \ufo of $f_{1,1}$ is given by adding
	$$x_1^{(1)}x_1^{(2)}-x_1^{(2)}x_1^{(1)},\quad  x_1^{(1)}x_2^{(2)}-x_1^{(2)}x_2^{(1)},\quad  x_2^{(1)}x_2^{(2)}-x_2^{(2)}x_2^{(1)}$$
	to either of the four foliations. The resulting set becomes independent of the choice of the foliations.
	\vsb
\end{example}

As a model of the dualities for operadic compatibilities established in Section~\ref{sec:km}, the three classes of polynomials (quasipolarizations, foliations and \ufos) for noncommutative homogeneous quadratic polynomials are related by the Koszul duality for homogeneous quadratic algebras, as introduced by Stewart Priddy in~\mcite{Pri70}. We just give a remark and leave the general treatment elsewhere.

\begin{remark}
	Let
	$f(x_1,\ldots,x_n)=\sum_{0\leq i \leq s} \alpha_i q_i(x_1,\ldots,x_n)$
	be a noncommutative homogeneous quadratic polynomial.
	Let $\Omega=\{1,2\}$ and $c=(1,1)$. Then the \ufo of a quasipolarization $f_c$ in $\bfk\langle x_1^{(1)},x_2^{(2)},\ldots,x_n^{(1)},x_n^{(2)}\rangle$ is the set
	$$J_c:=\Big\{\alpha_0 q_{0,c,k}+\sum_{1\leq i \leq s} \alpha_i q_{i,c,\sigma_i(k)}\,\Big|\, {1\leq k\leq 2}\Big\}\cup \{q_{i,c,1}-q_{i,c,2}\,|\,0\leq i\leq s\}.$$
	Then the two quadratic algebras
	$$A:=\bfk \langle x_1^{(1)},x_1^{(2)} \cdots,x_n^{(1)}, x_n^{(2)}\rangle/\langle f_{c}\rangle,
	\quad
	B:=\bfk\langle x_1^{(1)},x_1^{(2)} \cdots,x_n^{(1)}, x_n^{(2)}\rangle/
	\langle J_c \rangle$$
	are in Koszul dual of each other.
\end{remark}

\end{document}